\newcommand{\D}{\mathcal{D}}
\renewcommand{\o}{\omega}
\newcommand{\C}{\mathbb{C}}
\renewcommand{\c}{\mathcal{C}}
\newcommand{\N}{\mathcal{N}}
\newcommand{\n}{\mathfrak{n}}
\newcommand{\m}{\mathfrak{m}}
\newcommand{\I}{\mathcal{I}}
\newcommand{\sT}{L^{-1}}
\newcommand{\sK}{L}
\newcommand{\F}{\mathcal{F}}
\renewcommand{\L}{\mathcal{L}}
\newcommand{\cC}{\mathcal{C}}
\renewcommand{\cD}{\mathcal{D}}
\newcommand{\un}{\underline}
\newcommand{\wt}{\widetilde}
\renewcommand{\F}{\mathcal{F}}
\newcommand{\M}{\mathcal{M}}
\newcommand{\sM}{\mathfrak{M}}
\newcommand{\A}{\mathcal{A}}
\newcommand{\Z}{\mathbb{Z}}
\newcommand{\ov}{\overline}
\renewcommand{\S}{\mathcal{S}}
\renewcommand{\O}{\mathcal{O}}
\DeclareMathOperator{\Aut}{Aut}
\DeclareMathOperator{\Ber}{Ber}
\DeclareMathOperator{\Spec}{Spec}
\DeclareMathOperator{\Sym}{Sym}
\DeclareMathOperator{\Hom}{Hom}
\DeclareMathOperator{\End}{End}
\DeclareMathOperator{\rk}{rk}
\DeclareMathOperator{\red}{red}
\DeclareMathOperator{\spli}{split}
\DeclareMathOperator{\ev}{ev}
\DeclareMathOperator{\Isom}{Isom}
\DeclareMathOperator{\bos}{bos}
\DeclareMathOperator{\id}{id}
\DeclareMathOperator{\loc}{loc}
\DeclareMathOperator{\FIB}{FIB}
\DeclareMathOperator{\Gr}{GR}
\DeclareMathOperator{\eGr}{GR^{et}}
\DeclareMathOperator{\sGr}{sGR}
\DeclareMathOperator{\esGr}{sGR^{et}}
\DeclareMathOperator{\DM}{ST^{DM}}
\DeclareMathOperator{\DMs}{sST^{DM}}
\DeclareMathOperator{\St}{ST}
\DeclareMathOperator{\sSt}{sST}
\DeclareMathOperator{\eSt}{ST^{DM}}
\DeclareMathOperator{\esSt}{sST^{DM}}
\DeclareMathOperator{\HHom}{\mathscr{H}\text{\kern -3pt {\calligra\large om}}\,}
\DeclareMathOperator{\EExt}{\mathscr{E}\text{\kern -3pt {\calligra\large xt}}\,}
\newenvironment{sis}{\left\{\begin{aligned}}{\end{aligned}\right.}
\newtheorem{theorem}{Theorem}[section]
\newtheorem{proposition}[theorem]{Proposition}
\newtheorem{fact}[theorem]{Fact}
\newtheorem{lemma}[theorem]{Lemma}
\newtheorem{corollary}[theorem]{Corollary}
\newtheorem{definition}[theorem]{Definition}
\newtheorem{theoremalpha}{Theorem}
\theoremstyle{definition}
\newtheorem{remark}[theorem]{Remark}
\title{Moduli and periods of Supersymmetric Curves}
\date{}
\thanks{Both authors are funded by the FIRB 2012 Moduli Spaces and their Applications, and they acknowledge the support of the Simon center for their participation to the Supermoduli workshop in Stony Brook. GC also acknowledges the support by the ``De Giorgi Center" for his participation to the intensive research period ``Perspectives in Lie Theory", where part of this work was carried out.}
\author[G. Codogni]{Giulio Codogni}
\address{Dipartimento di Matematica e Fisica, Universit\`{a} Roma Tre, Largo San Leonardo Murialdo, 1
00146, Roma, Italy.}
\email{codogni@mat.uniroma3.it}
\author[F. Viviani]{Filippo Viviani}
\address{Dipartimento di Matematica e Fisica, Universit\`{a} Roma Tre, Largo San Leonardo Murialdo, 1
00146, Roma, Italy.}
\email{viviani@mat.uniroma3.it}
\begin{document}

\maketitle

\begin{abstract}Supersymmetric curves are the analogue of Riemann surfaces in super geometry. We establish some foundational results about (Deligne-Mumford) complex superstacks, and we then prove that the moduli superstack of supersymmetric curves is a smooth Deligne-Mumford complex superstack. We then show that the superstack of supersymmetric curves  admits a coarse complex superspace, which, in this case, is just an ordinary complex space. In the second part of this paper we discuss the period map. We remark that the period domain is the moduli space of ordinary abelian varieties endowed with a symmetric theta divisor, and we then show that the differential of the period map is surjective. In other words, we prove that any first order deformation of a classical Jacobian is the Jacobian of a supersymmetric curve.
\end{abstract}

\begin{section}{Introduction}

Supergeometry is one of the first example of non-commutative geometry. In this field, one considers spaces of some type (differentiable, complex, algebraic, etc...) with both commuting and anti-commuting coordinates. Supersymmetric (or susy for simplicity) curves are the generalization in this setting of Riemann surfaces. The main motivations for studying susy curves and their moduli superspaces come from supersymmetric string theory, where they play a central role. 

In the first part of this paper, after  some foundational material about complex superstacks, we construct the moduli complex superstack of susy curves and we show that it admits a coarse superspace, which turns out to be an ordinary complex space.
 In the second part of the paper, we give an algebraic description of the period map, and we prove that the differential of the period map (whenever defined) is surjective.

Our motivation is twofold. First, we hope to help the development of supersymmetric string theory. Secondly, we would like to point out a strong connection between susy curves and classical moduli spaces, aiming to give a new insight into these objects. Let us briefly go trough the results and the organization of this paper.

In Section \ref{sec:prel}, we provide a self contained introduction to complex supergeometry. We want to establish the basic definitions and notation in a way that it is convenient for superstacks. The main novelty is the description of the canonical involution $\Gamma$ of a (complex) superspace $X$, and the corresponding quotient $X/\Gamma$ by $\Gamma$, which we call the \emph{bosonic quotient}. The bosonic quotient functor is left adjoint to the natural inclusion $i$ of (complex) spaces into (complex) superspaces; while its right adjoint is the more familiar bosonic truncation $X\mapsto X_{\bos}$. 
Moreover, we introduce smooth and \'etale morphisms and we investigate their behaviors with respect to the above three functors (natural inclusion, bosonic truncation and bosonic quotient), see Propositions \ref{P:etale} and \ref{P:smooth}.

Taking a rather algebraic point of view, our main references are \cite[Chap. 4]{Manin1} and \cite{Vaintrob}. Other references about supergeometry are \cite{Deligne}, \cite{Fioresi}, \cite{Witten1}, \cite{ProjEmbed} and the first section of \cite{DW1}.


Section \ref{sec:superstack} is devoted to introduce complex (resp. Deligne-Mumford complex) superstacks and to prove some foundational results about them.

In subsection \ref{S:CFG}, we introduce the $2$-category $\FIB_{\S}$ of categories fibered in groupoids (=CFG) over the category $\S$ of complex superspaces and the $2$-category $\FIB_{\S_{\rm ev}}$ of CFG over the category $\S_{\rm ev}$ of complex spaces. The $2$-Yoneda lemma provides a fully faithful embedding of $\S$ into $\FIB_{\S}$ and of $\S_{\rm ev}$ into $\FIB_{\S_{\rm ev}}$. We define a pair of adjoint $2$-functors $i:\FIB_{\S_{\rm ev}}\to \FIB$ and $(-)_{\bos}:\FIB_{\S}\to \FIB_{\S_{\rm ev}}$, which extend the functors $i:\S_{\rm ev}\to \S$ and $(-)_{\bos}:\S\to \S_{\rm ev}$. 


In subsection  \ref{S:stacks}, we introduce superstacks (resp. stacks) as a CFG over $\S$ (resp. $\S_{\rm ev}$) that satisfy descent theory with respect to the \'etale topology. We prove that the $2$-functors $i$ and $(-)_{\bos}$ restrict to the $2$-subcategories of stacks and superstacks (see Lemma \ref{L:i-bos-stacks}), and that the CFG over $\S$ associated to a complex superspace  is a superstack and similarly for complex spaces (see Lemma \ref{L:repres}). 

In subsection \ref{S:algstacks}, we introduce the $2$-category $\sSt$ (resp. $\esSt$) of complex (resp. Deligne-Mumford=(DM) complex) superstacks as superstacks having a separated and representable diagonal and possessing a smooth (resp. \'etale) atlas, and similarly the $2$-category $\St$ (resp. $\eSt$) of complex (resp. DM complex) stacks. 
It is then clear that the CFG over $\S$ associated to a complex superspace is a DM complex superstack, and similarly for complex spaces. 
In Proposition \ref{P:i-bos-DM}, we prove that  the two $2$-functors $i$ and $(-)_{\bos}$ restrict to $2$-functors between the $2$-subcategories $\St^{({\rm et})}$ and $\sSt^{({\rm et})}$. 


In subsection \ref{SS:supergrp}, we introduce  the $2$-category $\sGr$ (resp. $\esGr$) of complex (resp. \'etale complex) supergroupoids $X_1\rightrightarrows X_0$, and similarly the $2$-category $\Gr$ (resp. $\eGr$) of complex (resp. \'etale complex) groupoids. 
The relation between (resp. \'etale) complex (super)groupoids and (resp. DM) complex (super)groupoids comes from the existence of the  essentially surjective realization $2$-functors:
\begin{equation*}\label{E:Fintro}
\begin{aligned}
\F: \sGr^{({\rm et})} &\longrightarrow \sSt^{({\rm DM})}\\
X_1\rightrightarrows X_0 & \mapsto [X_1\rightrightarrows X_0],
\end{aligned}
\hspace{0.5cm} 
\text{ and }
\hspace{0.5cm}
\begin{aligned}
\F_{\ev}: \Gr^{({\rm et})} &\longrightarrow \St^{({\rm DM})}\\
X_1\rightrightarrows X_0 & \mapsto [X_1\rightrightarrows X_0].
\end{aligned}
\end{equation*} 
In Proposition \ref{P:i-bosDM}, we define the natural inclusion $i:\Gr^{({\rm et})}\to \sGr^{({\rm et})}$ and the bosonic truncation $(-)_{\bos}:\sGr^{({\rm et})}\to \Gr^{({\rm et})}$ and we prove that they commute with the realization functors $\F$ and $\F_{\ev}$. 
Furthermore, adapting the results of \cite{Moe} and \cite{Pro} to our setup, we deduce that the restrictions of the realization $2$-functors $\F$ and $\F_{\ev}$ from \'etale complex (super)groupoids to DM complex (super)groupoids induce isomorphisms of bicategories 
$\esGr[W^{-1}] \cong \DMs$ and  $  \sGr[W_{\ev}^{-1}]\cong \DM$, where $\esGr[W^{-1}]$ (resp. $\sGr[W_{\ev}^{-1}]$) is the localization of $\esGr$ (resp. $\eGr$) with respect to the collection $W$ (resp. $W_{\ev}$) of weak equivalences of \'etale complex (super)groupoids.
Finally, in Theorem \ref{T:DMtrunc}, we define the bosonic quotient  $2$-functor $-/\Gamma: \esGr\to \eGr$ and we prove that it descend to the localization with respect to weak equivalences of \'etale complex (super)groupoids, so that we get an induced pseudofunctor $-/\Gamma:\esSt\to \eSt$
which is left adjoint to the natural inclusion functor $i$. 

We can summarize all the results proved in Section \ref{sec:superstack} into the following 


\begin{theoremalpha}[Proposition \ref{P:i-bos-DM}, Proposition \ref{P:i-bosDM} and Theorem \ref{T:DMtrunc} ]\label{T:A}
\noindent 
\begin{enumerate}
\item \label{T:A1} We have the following commutative diagram of $2$-functors among $2$-categories
 $$\xymatrixcolsep{5pc}  \xymatrix{ 
\sGr \ar@/_1pc/[r]_{(-)_{\bos}} \ar[d]_{\F}& \Gr  \ar@{_{(}->}[l]^{i} \ar[d]^{\F_{\ev}} \\
\sSt  \ar@/_1pc/[r]_{(-)_{\bos}}  & \St  \ar@{_{(}->}[l]^{i} \\ 
}$$
and moreover $(-)_{\bos}$ is right adjoint to $i$. 
\item \label{T:A2} We have the following commutative diagram of pseudofunctors among bicategories
 $$\xymatrixcolsep{5pc}  \xymatrix{ 
 \esGr \ar@/_1pc/[r]_{(-)_{\bos}} \ar[dd]_{\F} \ar@/^1pc/[r]^{-/\Gamma} & \eGr \ar@{_{(}->}[l]^{i} \ar[dd]^{\F_{\ev}}   \\
  &  \\
\esGr[W^{-1}] \cong \esSt  \ar@/_1pc/[r]_{(-)_{\bos}}  \ar@/^1pc/[r]^{-/\Gamma}  &    \eSt  \cong \eGr[W_{\ev}^{-1}]  \ar@{_{(}->}[l]^{i} \\ 
}$$
and moreover $(-)_{\bos}$ is right adjoint to $i$ and $-/\Gamma$ is left adjoint to $i$. 
\end{enumerate}
\end{theoremalpha}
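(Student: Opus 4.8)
The plan is to reduce everything to the groupoid level, where the three functors have explicit descriptions, and then descend to the localizations. Part \eqref{T:A1} is the easier half: the functors $i:\Gr\to\sGr$, $(-)_{\bos}:\sGr\to\Gr$ on the $2$-categories of (not necessarily \'etale) complex (super)groupoids are defined componentwise, sending a groupoid $X_1\rightrightarrows X_0$ to $i(X_1)\rightrightarrows i(X_0)$ (resp.\ $(X_1)_{\bos}\rightrightarrows (X_0)_{\bos}$), using that $i$ and $(-)_{\bos}$ on superspaces already commute with the fiber products defining source, target, composition and unit. Commutativity of the square with $\F$, $\F_{\ev}$ is then immediate from the definition $\F(X_1\rightrightarrows X_0)=[X_1\rightrightarrows X_0]$ together with the fact, established in Section~\ref{sec:superstack}, that $i$ and $(-)_{\bos}$ on CFGs extend the ones on superspaces and are compatible with the quotient construction. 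For the adjunction $i\dashv (-)_{\bos}$ on $\sSt$ (resp.\ $\St$), I would first record it on superspaces (Section~\ref{sec:prel}), then note that the unit $\id\Rightarrow (-)_{\bos}\circ i$ and counit $i\circ(-)_{\bos}\Rightarrow\id$ are already $2$-natural transformations of CFGs (this is part of what subsection~\ref{S:CFG} gives), and finally observe that a $2$-natural transformation between CFGs restricts to one between (super)stacks because the (super)stack condition is a property, not extra data; the triangle identities are inherited.

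For part \eqref{T:A2} the additional inputs are: (a) the isomorphisms of bicategories $\esGr[W^{-1}]\cong\DMs$ and $\eGr[W_{\ev}^{-1}]\cong\DM$ from the adaptation of \cite{Moe},\cite{Pro}; (b) that $i$ and $(-)_{\bos}$ on \'etale (super)groupoids preserve weak equivalences, hence descend to the localizations; and (c) the construction of $-/\Gamma:\esGr\to\eGr$ with its descent to localizations, which is exactly the content of Theorem~\ref{T:DMtrunc}. Granting these, the commutative square for $i$ and $(-)_{\bos}$ at the level of localizations follows by applying the universal property of localization to the already-commuting square in \eqref{T:A1} restricted to the \'etale world. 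The triangle of $-/\Gamma$ with $\F$, $\F_{\ev}$ is Theorem~\ref{T:DMtrunc} verbatim. For the adjunctions on the localized categories: $i\dashv (-)_{\bos}$ descends because localization is compatible with adjunctions whose unit/counit are built from maps already inverted (here they are weak equivalences when one side is \'etale, by Propositions~\ref{P:etale} and~\ref{P:smooth}); and $-/\Gamma\dashv i$ is again the assertion of Theorem~\ref{T:DMtrunc}.

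So the proof is essentially a bookkeeping argument: each square and each adjunction is either (i) a formal consequence of the corresponding statement one level down (superspaces $\to$ CFGs $\to$ (super)stacks $\to$ localized (super)stacks), or (ii) literally one of the cited results. I would organize the write-up as: first \eqref{T:A1}, checking commutativity of the square and then the adjunction; then \eqref{T:A2}, invoking (a)--(c) and deducing the two squares and then the two adjunctions by transport along the localization equivalences.

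The main obstacle I anticipate is \textbf{not} any single square but the verification that $-/\Gamma$ genuinely descends to a pseudofunctor on $\esSt$ and is left adjoint to $i$ there --- i.e.\ the heart of Theorem~\ref{T:DMtrunc}. Concretely one must check that $-/\Gamma$ carries weak equivalences of \'etale complex supergroupoids to weak equivalences of \'etale complex groupoids (this uses that taking the $\Gamma$-quotient commutes with the relevant fiber products and with \'etale base change, which is where the analysis of $\Gamma$ and the bosonic quotient from Section~\ref{sec:prel} is really needed), and that the resulting pseudofunctor does not depend, up to canonical $2$-isomorphism, on the choice of atlas. Once that is in hand, the adjunction $-/\Gamma\dashv i$ on superstacks is obtained by descending the superspace-level adjunction along the realization functors, using essential surjectivity of $\F$ and $\F_{\ev}$ to produce the unit and counit on arbitrary DM superstacks. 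Everything else is routine.
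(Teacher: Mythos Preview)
Your proposal is correct and follows essentially the same route as the paper: Theorem~A is indeed a bookkeeping compilation of Propositions~\ref{P:i-bos-DM}, \ref{P:i-bosDM} and Theorem~\ref{T:DMtrunc}, and you have correctly identified that the only substantive content lies in Theorem~\ref{T:DMtrunc} (specifically, that $-/\Gamma$ preserves weak equivalences of \'etale supergroupoids, which hinges on Lemma~\ref{lem:quoziente}). One minor point of divergence: for the bottom row of the diagram in part~\eqref{T:A2}, you propose to obtain $i$ and $(-)_{\bos}$ on $\esSt$, $\eSt$ by descending the groupoid-level functors through the localization (after checking they preserve weak equivalences), whereas the paper already has these functors defined directly at the CFG level (subsection~\ref{S:CFG} and Proposition~\ref{P:i-bos-DM}) and then verifies compatibility with groupoid presentations in Proposition~\ref{P:i-bosDM}. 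Both routes are valid and yield the same functors; the paper's route avoids the (easy but unstated) check that $i$ and $(-)_{\bos}$ preserve weak equivalences, at the cost of having to verify separately that the CFG-level functors restrict to complex (super)stacks. Your justification for the descended adjunction $i\dashv(-)_{\bos}$ is slightly imprecise (the unit and counit need not be weak equivalences; what matters is that both functors send $W$ to $W_{\ev}$ and vice versa), but the conclusion is right.
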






We are not able to define the bosonic quotient  $2$-functor (or pseudofunctor) from (general) complex superstacks to (general) complex stack. There are several reasons which prevented us in defining such a functor. First of all, we do not know how to extend the bosonic quotient functor $/ \Gamma:\S \to \S_{\rm ev}$ to CFG's,  the reason being that $/\Gamma$ does not admit a left adjoint functor (since it does not preserve fibre products), see Remark \ref{R:NObosquot}.  If we, instead, try to define the bosonic truncation using atlas (and therefore passing through complex supergroupoids), we are stacked with two further problems. First of all, we do not know if the bicategory of complex (super)stacks is the localization of the bicategory of complex (super)groupoids with respect to weak equivalences since the arguments of \cite{Moe} and \cite{Pro} seem only to work for \'etale (super)groupoids and DM (super)stacks.  And secondly, the bosonic quotient of a complex supergroupoids does not seem to be  a complex groupoids (and not even a groupoid at all!) due to the fact that the bosonic quotient does not preserve fiber products and smoothness (see Remark \ref{R:bad-Gamma}).  We think that these problems deserve further investigation.


Note that our results can easily be extended from (DM) complex superstacks to (DM) algebraic superstacks. We have chosen  to work in the category of complex superspaces/superstacks since the construction of the superstack of susy curves (see the discussion that follows) is easier in the complex category (due to the existence of Kuranishi families). For an introduction to  algebraic superstack, see also   \cite{AG}.


In Section \ref{sec:susy}, we introduce the notion of a susy curve over a base complex superspace, and we prove all the results which are needed for the construction of the moduli space: the equivalence of susy curves over an ordinary complex space with spin curves (see subsection \ref{S:susy-spin}), the representability of the isomorphism functor between two susy curves over the same base (see subsection \ref{S:automorphism}),  the construction of the Kuranishi families for a susy curve over a point (see subsection \ref{S:Kuranishi}). 
References about supersymmetric curves are \cite{Manin2}, \cite{Ber} and \cite{Witten2}; other sources are  \cite{Giulio}, \cite{LeBrun}, \cite{Rabin}, \cite{TR}, \cite{FK}, \cite{FKAut} and \cite{K}. We only consider for simplicity susy curves of genus $g\geq 2$; susy curves of genus $0$  and $1$ are studied in \cite{FR}, \cite[Sections 2.7-8]{Manin2}, \cite{REL}. Other topics on susy curves that we do not discuss are: punctures of Neveu-Schwarz and Ramond type  (see \cite{Witten2}) and theta functions (see \cite{Tsu}).

In Section \ref{sec:moduli}, we introduce and study the moduli superstack $\sM_g$ of susy curves of genus at least $2$. We summarize the main results that we get into the following

 
\begin{theoremalpha}[= Theorem \ref{T:sS-superstack} and Corollary \ref{cor:coarse}]\label{T:B}
Let $g\geq 2$. 
\noindent 
\begin{enumerate}[(1)]
\item \label{T:B1} $\sM_g$ is a smooth and separated DM complex superstack of dimension $3g-3|2g-2$ whose bosonic truncation $(\sM_g)_{\bos}$ is the complex stack $\S_g$ of spin curves of genus $g$.  Moreover, $\sM_g$ has two connected components, denoted by $\sM_g^+$ and $\sM_g^-$, whose bosonic truncations are $(\sM_g)_{\bos}^+$ and $(\sM_g)_{\bos}^-$ are the complex stacks $\S_g^+$ and $\S_g^-$ of, respectively, even and odd spin curves of genus $g$.  
\item \label{T:B2} There exists a  coarse moduli superspace $\mathbb{M}_g$ for $\sM_g$, which is indeed an ordinary complex space and it is also the coarse moduli space for the bosonic quotient  $\sM_g/\Gamma$. 
The  complex space  $\mathbb{M}_g$ is  non-reduced; its underlying reduced complex space $(\mathbb{M}_g)_{\red}$ is isomorphic to the coarse moduli space $S_g$ of  spin curves of genus $g$. In particular, $\mathbb{M}_g$ is separated, and it has two connected components whose 
underlying reduced spaces are the coarse moduli spaces $S_g^+$ and $S_g^-$ of, respectively, even and odd spin curves of genus $g$. 

\end{enumerate}
\end{theoremalpha}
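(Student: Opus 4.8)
The plan is to realize $\sM_g$ as the category fibered in groupoids over $\S$ whose fibre over a complex superspace $T$ is the groupoid of susy curves of genus $g$ over $T$, and then to verify the defining properties of a smooth separated DM complex superstack one at a time, feeding in the results of Section~\ref{sec:susy}. That $\sM_g$ is a superstack is formal \'etale descent, since a susy curve is a proper flat family equipped with a superconformal distribution and all of this data descends along \'etale covers of superspaces. The diagonal $\sM_g\to\sM_g\times\sM_g$ is representable and separated because for susy curves $\mathcal{X},\mathcal{X}'$ over $T$ the isomorphism sheaf $\Isom_T(\mathcal{X},\mathcal{X}')$ is representable by a superspace that is \emph{finite} over $T$ (subsection~\ref{S:automorphism}); finiteness of the diagonal forces its properness, hence the separatedness of $\sM_g$.

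The \'etale atlas comes from the Kuranishi families of subsection~\ref{S:Kuranishi}. For a susy curve $X_0$ over a point, deformations of the pair $(X_0,\mathcal{D})$ are governed by the sheaf $\mathcal{A}$ of vector fields preserving the superconformal distribution, so that $H^0(\mathcal{A})$ is the space of infinitesimal automorphisms, $H^1(\mathcal{A})$ the space of first order deformations, and $H^2(\mathcal{A})$ the obstruction space. Viewing $\mathcal{A}$ as a sheaf on the underlying spin curve $(C,L)$, with $L^{\otimes 2}\cong K_C$, one has $\mathcal{A}_{\bar 0}\cong T_C$ and $\mathcal{A}_{\bar 1}\cong L^{-1}$, hence for $g\ge 2$
\[
H^0(\mathcal{A})=H^0(T_C)\oplus H^0(L^{-1})=0,\qquad H^1(\mathcal{A})=H^1(T_C)\oplus H^1(L^{-1}),
\]
with $\dim H^1(T_C)=3g-3$ and $\dim H^1(L^{-1})=h^0(K_C\otimes L)=h^0(K_C^{3/2})=2g-2$. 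Since the space underlying $X_0$ is that of the curve $C$, $H^2(\mathcal{A})=0$, so the Kuranishi space $U_{X_0}$ is smooth of dimension $3g-3|2g-2$, and the vanishing $H^0(\mathcal{A})=0$ upgrades versality of the Kuranishi family to the assertion that $U_{X_0}\to\sM_g$ is \'etale. The disjoint union of a suitable finite collection of such Kuranishi spaces is then an \'etale atlas, so $\sM_g\in\esSt$ and is smooth of the stated dimension. Restricting $\sM_g$ to ordinary complex spaces and invoking the equivalence between susy curves over an ordinary base and spin curves (subsection~\ref{S:susy-spin}) identifies $(\sM_g)_{\bos}$ with $\S_g$; since $\sM_g$ and $(\sM_g)_{\bos}$ have the same underlying topological space (the nilpotent odd directions do not change the topology) and $\S_g=\S_g^+\sqcup\S_g^-$ by the locally constant Atiyah--Mumford parity, we obtain $\sM_g=\sM_g^+\sqcup\sM_g^-$ with $(\sM_g^\pm)_{\bos}=\S_g^\pm$. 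This proves~(\ref{T:B1}).

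For~(\ref{T:B2}), Theorem~\ref{T:A}(\ref{T:A2}) tells us that $\sM_g/\Gamma$ is a DM complex \emph{stack} (an ordinary object); it is of finite type and separated (its reduction is $\S_g$), so by the Keel--Mori theorem it admits a coarse moduli space $\mathbb{M}_g$, which is an ordinary complex space and is separated. Because $-/\Gamma$ is left adjoint to the inclusion $i$, every morphism from $\sM_g$ to a complex space factors uniquely through $\sM_g/\Gamma$, hence through $\mathbb{M}_g$, so $\mathbb{M}_g$ is a coarse moduli superspace for $\sM_g$ and coincides with the coarse space of $\sM_g/\Gamma$; and the points of $\mathbb{M}_g$ match those of $S_g$, since $\C$-points of $\sM_g$ factor through $(\sM_g)_{\bos}=\S_g$ as $\Spec\C$ is ordinary. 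For the local picture, an \'etale chart of $\sM_g$ near $[X_0]$ is a quotient groupoid $[R\rightrightarrows U_{X_0}]$ with $R=U_{X_0}\times_{\sM_g}U_{X_0}$ and $\Aut(X_0)$ finite; applying $-/\Gamma$ objectwise (compatibly with $\F_{\ev}$, by Theorem~\ref{T:A}(\ref{T:A2})) gives a chart of $\sM_g/\Gamma$ with objects $U_{X_0}/\Gamma=\Spec((\O_{U_{X_0}})_{\bar 0})$, so that \'etale-locally
\[
\mathbb{M}_g\cong\Spec\big((\O_{U_{X_0}})_{\bar 0}^{\Aut(X_0)}\big).
\]
Passing to reductions kills the nilpotents of $(\O_{U_{X_0}})_{\bar 0}$, in particular the products $\theta_i\theta_j$ of the odd Kuranishi coordinates, recovering $\O_{(U_{X_0})_{\bos}}^{\Aut(X_0)}$, which is reduced (invariants, in characteristic $0$, of the smooth ring $\O_{(U_{X_0})_{\bos}}$) and is precisely a local model of $S_g$; gluing gives $(\mathbb{M}_g)_{\red}\cong S_g$, and hence $\mathbb{M}_g$ has two connected components with reductions $S_g^+$ and $S_g^-$. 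Finally $\mathbb{M}_g$ is non-reduced: every spin curve carries the automorphism acting by $-1$ on $L$ and by $\id$ on $C$, which acts by $-1$ on each of the $2g-2\ge 2$ odd Kuranishi coordinates and so trivially on their products, so already at a generic point, where $\Aut(X_0)=\{\pm1\}$ and thus $(\O_{U_{X_0}})_{\bar 0}^{\Aut(X_0)}=(\O_{U_{X_0}})_{\bar 0}$, this ring contains the nonzero nilpotents $\theta_i\theta_j$.

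The hard part is the deformation theory underlying the atlas --- the identification of $\mathcal{A}_{\bar 0}$ and $\mathcal{A}_{\bar 1}$, the vanishing $H^{\ge 2}(\mathcal{A})=0$ giving smoothness of $U_{X_0}$, and the vanishing $H^0(\mathcal{A})=0$ making $U_{X_0}\to\sM_g$ \'etale rather than merely versal --- together with the \'etale-local compatibility between forming coarse spaces and taking bosonic quotients of groupoid presentations; the remaining steps are bookkeeping with the adjunctions of Theorem~\ref{T:A} and classical facts about $\S_g$ and $S_g$.
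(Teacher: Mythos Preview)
Your argument for part~(\ref{T:B1}) follows the paper's approach closely: representability and finiteness of the Isom sheaf for the diagonal, Kuranishi families for the \'etale atlas, and the equivalence with spin curves for the bosonic truncation. The extra detail you give on the deformation sheaf $\mathcal{A}$ and the role of $H^0(\mathcal{A})=0$ in upgrading versality to \'etaleness is a helpful elaboration of what the paper packages into Theorem~\ref{thm:exist_kur} and the reference to \cite[Prop.~XII.3.10]{ACG}.

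There is, however, a genuine gap in your proof of part~(\ref{T:B2}). You write that the adjunction $-/\Gamma \dashv i$ implies ``every morphism from $\sM_g$ to a complex space factors uniquely through $\sM_g/\Gamma$, hence through $\mathbb{M}_g$, so $\mathbb{M}_g$ is a coarse moduli superspace for $\sM_g$.'' But Definition~\ref{D:coarse} requires the universal property for maps into arbitrary complex \emph{super}spaces $Z$, not just bosonic ones, and the adjunction only controls maps into objects of the form $i(\N)$ with $\N$ an ordinary stack. The factorization through $\sM_g/\Gamma$ for general $Z$ is \emph{not} formal: for an arbitrary DM complex superstack $\M$ it is false (take $\M=Z$ a non-bosonic superspace and the identity map). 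What makes it true for $\sM_g$ is the special feature singled out in Proposition~\ref{prop:fact}: the canonical automorphism $\Gamma_{\c}$ of a susy curve $\c\to S$ lies over $\Gamma_S$ rather than over $\id_S$, giving a section $\gamma=\id_{X_0}\times\Gamma_{X_0}:X_0\to X_1$ in any groupoid presentation $X_1\rightrightarrows X_0$ of $\sM_g$. The paper uses this section to show that any morphism $f_0:X_0\to M$ arising from a map $\sM_g\to M$ satisfies $f_0=f_0\circ\Gamma_{X_0}$, hence factors through $X_0/\Gamma$, and from there through $M_{\bos}$; only then does the adjunction finish the job. Your argument needs this step (or an equivalent one exploiting the canonical automorphism) to close the gap; the rest of your treatment of~(\ref{T:B2}) --- Keel--Mori for $\sM_g/\Gamma$, the local description via $(\O_{U_{X_0}})_{\bar 0}^{\Aut(X_0)}$, non-reducedness, and the identification of the reduction with $S_g$ --- is correct and matches the paper.
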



We prove part \eqref{T:B1} of the above theorem (which is a folklore result  among the experts) by gluing together (a finite number of) Kuranishi families of susy curves, miming the construction of the moduli stack of Riemann surfaces given by Arbarello-Cornalba in \cite{AC} (see also  \cite[Chap. XII.4]{ACG}). 
Regarding the existence of a coarse moduli space as in \eqref{T:B2}, note that, in contrast with the classical case, it is not clear to us if every separated DM complex   superstack has a coarse complex superspace. We establish part \eqref{T:B2} of the above theorem by showing that  any map from $\sM_g$ onto a complex superspace has to factor through the bosonic quotient $\sM_g/\Gamma$ (see Proposition \ref{prop:fact}), a property which uses in a crucial way  the existence of the canonical automorphism on  every susy curve.  

The above theorem leaves open some natural questions. For example, it would be interesting to know whether $\mathbb{M}_g$ is quasi-projective and if  it can be interpreted as a coarse moduli space for some classical objects (e.g. spin curves endowed with some extra structure). Also it would be very interesting to investigate whether $\sM_g$ and $\mathbb{M}_g$ could be constructed as quotients of Hilbert superschemes of pluri-canonically embedded susy curves, as in the classical case (see \cite[Section 3]{Giulio} for a discussion of this approach). This could also lead to the construction of $\sM_g$ and $\mathbb{M}_g$ as algebraic superstacks/superspaces: our choice of working with complex superstacks/superspaces is due to the fact that Kuranishi families are easier to construct in complex supergeometry, and the Hilbert superscheme approach could bypass this problem (see \cite[Chap. XII.5]{ACG} for a construction of the algebraic stack of curves using Hilbert schemes of pluri-canonically embedded curves).  And finally, it would be interesting to write down the details of the compactification of $\sM_g$ via stable susy curves proposed  by P. Deligne in a letter to Y. Manin back in 1987 (recently this letter has been posted on Deligne's webpage \cite{DeligneLetter}), see also \cite{BaS} and \cite{Coh}. 


There are in the literature other approaches to the construction of the moduli superspace/superstack of susy curves, which we now briefly review. First of all, fine moduli superspaces of susy curves with level $n$-structures (for $n\geq 3$) have been constructed  as complex ``canonical superorbifolds'' by LeBrun and Rothstein in \cite{LeBrun} and as complex algebraic superspaces by Dom\'inguez-Perez, Hern\'andez-Ruip\'erez and  Sancho de Salas in \cite{spain}.
The advantage of our approach using complex superstacks is that we can work with moduli of susy curves without any level $n$-structure. 
A completely different approach to the moduli of susy curves consists in using the uniformization theorem for susy curves and  the super Teichm\"uller theory, see \cite{Rab1}, \cite{CR}, \cite{BB}, \cite{Bry}, \cite{Hod1}, \cite{Hod2}, \cite{Hod3}, \cite{Hod4}, \cite{UY1}, \cite{UY2}, \cite{UY3}, \cite{BH}.
Recently, Donagi and Witten have studied in \cite{DW2} and \cite{DW1} the global geometry of the moduli superstack of susy curves (assuming for granted its existence)  by showing that it is not  projected (hence in particular not split) for genera $g\geq 5$, i.e. the natural inclusion of $(\sM_g)_{\bos}$ in $\sM_g$ does not admit a section; a problem which was originally investigated (with some partial results)   by Falqui and Reina in \cite{Fal}, \cite{FR1}, \cite{FR2}.



In section \ref{sec:period}, we study periods of susy curves. 
We introduce the period map as a rational morphism
$$
P\colon \sM_g^+\dashrightarrow \N_g 
$$
where $\N_g$ is the classical moduli stack of $g$ dimensional abelian varieties endowed with a symmetric theta divisor. The map $P$ is defined on the open subset of split susy curves $C_L\in \sM_g^+$ such that $h^0(L)=0$. Its bosonic truncation $P_{\rm bos}:\S_g^+\dashrightarrow \N_g$ is the map that sends a spin curve $(C,L, \phi)$ into the the Jacobian $J(C)$ of $C$ endowed with the symmetric theta divisor associated to the spin structure. Note that $P_{\rm bos}$ is defined on the entire moduli stack $\S_g$.

The period map $P$ factors through the quotient $\sM_g^+/\Gamma$ and it gives rise to a rational map 
$$
P/\Gamma\colon \sM_g^+/\Gamma\dashrightarrow \N_g.
$$
In section \ref{sec:period}, we focus on the infinitesimal period map, i.e. the differential $d(P/\Gamma)$ of the period map $P/\Gamma$. Our results are summarized into the  following  


\begin{theoremalpha}[= Theorem \ref{diff_per} and Theorem \ref{torelli}]
Let $C_L\in \sM_g^+/\Gamma$ be a split susy curve such that $h^0(L)=0$.  
\begin{enumerate}
\item The infinitesimal period map at $C_L$
$$ 
d(P/\Gamma)_{C_L}\colon T_{[C_L]}(\sM_g^+/\Gamma)=H^2( C\times C, \sT\boxtimes \sT(-\Delta))^+\to T_{P(C_L)}\N_g =\Sym^2H^1(C,\O)
$$
is the even part of the $H^2$ of the morphism of sheaves on $C\times C$
$$
\sT\boxtimes \sT(-\Delta) \hookrightarrow \O
$$
defined by the multiplication with the Szeg\H{o} kernel $S_L$ associated to $L$.

\item The infinitesimal period map $d(P/\Gamma)_{C_L}$ at $C_L$ is surjective. 
\end{enumerate}
 \end{theoremalpha}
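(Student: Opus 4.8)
The plan is to reduce the surjectivity of the infinitesimal period map to a statement about the Szeg\H{o} (equivalently, Cauchy-Szeg\H{o}) kernel $S_L$ and the cup product on the curve $C$. By part (1) of the theorem, $d(P/\Gamma)_{C_L}$ is identified with the even part of the map induced on $H^2(C\times C,-)$ by the inclusion $\sT\boxtimes\sT(-\Delta)\hookrightarrow\O_{C\times C}$ given by multiplication by $S_L$. So I would first set up the relevant cohomology: by the K\"unneth formula and the vanishing $H^0(C,\sT)=h^0(L)=0$ (which holds by hypothesis, since $\sT=L^{-1}$ and $L$ has no sections — wait, one must be careful: the relevant vanishing is $h^0(C,\sT)=h^0(C,L^{-1})$, which is automatic for $g\geq 2$ and $\deg L=g-1>0$), the group $H^2(C\times C,\sT\boxtimes\sT(-\Delta))$ receives contributions from $H^1(C,\sT)\otimes H^1(C,\sT)$ twisted by the residue/diagonal sequence, and the target $\Sym^2 H^1(C,\O)$ sits inside $H^1(C,\O)\otimes H^1(C,\O)=H^2(C\times C,\O)$ by K\"unneth.

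The key step is then to make the map concrete. I would use the short exact sequence on $C\times C$
$$
0\to \sT\boxtimes\sT(-\Delta)\to \sT\boxtimes\sT\to (\sT\boxtimes\sT)|_\Delta\to 0,
$$
noting $(\sT\boxtimes\sT)|_\Delta\cong \sT^{\otimes 2}=L^{-2}=\omega_C^{-1}$ on $C\cong\Delta$ (using $L^2\cong\omega_C$). Taking cohomology and chasing through the connecting maps, the source $H^2(C\times C,\sT\boxtimes\sT(-\Delta))$ is built from $H^1(C,\sT)^{\otimes 2}$ and a piece of $H^1(C,\omega_C^{-1})$. I would argue that, after passing to the even part, the composite $H^1(C,\sT)\otimes H^1(C,\sT)\to \Sym^2 H^1(C,\O)$ induced by $S_L$ is, up to the Serre-duality identification $H^1(C,L^{-1})\cong H^0(C,\omega_C\otimes L)^\vee\cong H^0(C,L)^\vee$ — hmm, better: $H^1(C,\sT)=H^1(C,L^{-1})\cong H^0(C,L\otimes\omega_C)^\vee = H^0(C,L^{\otimes 3})^\vee$ — compatible with the natural symmetric multiplication, and that the Szeg\H{o} kernel $S_L\in H^0(C\times C,L\boxtimes L(\Delta))$ realizes precisely the pairing needed. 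The main point is that multiplication by $S_L$ implements, at the level of $H^1$, the map dual to the multiplication $H^0(C,\omega_C\otimes L^{-1})\otimes H^0(C,\omega_C\otimes L^{-1})\to H^0(C,\omega_C^{\otimes 2})$ — wait, since $L^2=\omega_C$ we have $\omega_C\otimes L^{-1}=L$, so this is $H^0(C,L)^{\otimes 2}\to H^0(C,L^{\otimes 2})=H^0(C,\omega_C)$. Dualizing, surjectivity of $d(P/\Gamma)$ becomes \emph{injectivity} of the symmetric multiplication map
$$
\mu\colon \Sym^2 H^0(C,\omega_C)\longrightarrow H^0(C,\omega_C^{\otimes 2}),
$$
or rather of a twisted variant involving $L$; the even-part bookkeeping accounts for the symmetrization.

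The final step is to invoke the classical result guaranteeing this injectivity: by Max Noether's theorem (for non-hyperelliptic $C$) the multiplication $\Sym^2 H^0(\omega_C)\to H^0(\omega_C^{\otimes 2})$ is \emph{surjective}, and by a dimension count $\dim\Sym^2 H^0(\omega_C)=\binom{g+1}{2}$ versus $\dim H^0(\omega_C^{\otimes 2})=3g-3$ it is certainly not injective for $g\geq 3$; so I must be careful that what is actually needed is surjectivity of $\mu$ (hence on the dual, injectivity), not the other way around — and indeed Noether/Petri gives exactly that, with the hyperelliptic case handled separately since there $H^0(\omega_C^{\otimes 2})$ is still generated appropriately up to a one-dimensional discrepancy that lies outside the even part. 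Concretely I would: (i) write down the coboundary map explicitly in \v{C}ech cohomology for a suitable affine cover, identifying the class of $S_L$; (ii) show the induced map on $H^2$ is the transpose of $\mu$ (in the appropriate $L$-twisted form) after Serre duality; (iii) conclude surjectivity from Noether's theorem, treating $g=2$ and the hyperelliptic locus by direct inspection.

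\medskip
The main obstacle I anticipate is step (ii): pinning down that multiplication by the Szeg\H{o} kernel really induces, on cohomology, the transpose of the canonical (or paracanonical, twisted by $L$) multiplication map, and correctly tracking the $\Z/2$-grading so that the ``even part'' on the source matches $\Sym^2 H^1(C,\O)$ on the target. This requires an honest computation with the connecting homomorphism of the diagonal sequence and a careful use of Serre duality on $C\times C$ — the sign conventions and the identification $S_L\leftrightarrow$ (Serre-dual of multiplication) are where all the content sits. Once that is in place, surjectivity is a clean consequence of the classical projective normality / Noether-Petri results for canonical curves.
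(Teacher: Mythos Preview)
Your proposal has a genuine gap: the identification of the transpose of $d(P/\Gamma)_{C_L}$ with the multiplication map $\mu:\Sym^2 H^0(\omega_C)\to H^0(\omega_C^{\otimes 2})$ is not correct, and this is visible already on the level of dimensions. The source $T_{[C_L]}(\sM_g^+/\Gamma)$ has dimension $3g-3+\binom{2g-2}{2}$, so its dual is much larger than $H^0(\omega_C^{\otimes 2})$. In fact, the paper's exact sequences (the one you wrote for $\sT\boxtimes\sT(-\Delta)$ and the analogous one for $\O(\Delta)$) show that the restriction of $d(P/\Gamma)_{C_L}$ to the subspace $H^1(C,T_C)$ is exactly the classical infinitesimal Torelli map, whose transpose \emph{is} $\mu$. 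Noether's theorem gives surjectivity of $\mu$, hence injectivity of its transpose $c=m^\vee$; but the cokernel of $c$ is $I_2(C)^\vee$, which is nonzero for $g\geq 4$. So the classical piece alone is not surjective, and the whole point is that the extra contribution coming from $\bigwedge^2 H^1(C,L^{-1})$ covers $I_2(C)^\vee$. Your outline never produces or analyzes this extra piece; invoking Noether--Petri cannot suffice.

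The paper's argument avoids all of this duality bookkeeping and is much shorter. One works directly at the sheaf level: the cokernel of the inclusion $\sT\boxtimes\sT(-\Delta)\stackrel{\cdot S_L}{\hookrightarrow}\O_{C\times C}$ is a sheaf $\F$ supported on the zero divisor $D$ of $S_L$ (viewed as a section of $L\boxtimes L(\Delta)$). Since $D$ is one-dimensional, $H^2(C\times C,\F)=0$, and the long exact sequence in cohomology immediately gives surjectivity of $H^2(\sT\boxtimes\sT(-\Delta))\to H^2(\O_{C\times C})$, hence of its even part. No Noether, no Petri, no case analysis for hyperelliptic curves.
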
 


 Part (1) of the above Theorem follows from an analytic formula (that we review in subsection \ref{sec:class_formula}) for the period map which was first discovered by   D'Hoker and Phong in \cite{DP} and then  improved and expanded by Witten in \cite[Section 8.3]{Witten2}. We prove part (1)  in subsection \ref{sec:inf_per}, after a description of the tangent spaces $T_{[C_L]}(\sM_g^+/\Gamma)$ and $T_{P(C_L)}\N_g$,  and a review of the construction of the Szeg\H{o} kernel $S_L$ associated to theta characteristic $L$ following the algebraic approach of Ben-Zvi and Biswas in \cite{BZB}. 
Our description of the infinitesimal period map as being induced by a morphism of sheaves on $C\times C$ is influenced by a paper of Bruno and Sernesi \cite{BS}. 
Part (2) of the above Theorem says, in a more fancy language, that any first order deformation of a generic classical Jacobian is the Jacobian of a susy curve. This is in contrast with the classical case: if the genus is at least $4$, the generic first order deformation of a classical Jacobian is not a Jacobian anymore.  Let us also point out that, in contrast to the classical case, the infinitesimal period map for susy curves cannot be injective for dimensional reasons, see Remark \ref{R:not-inj}.


The relation between periods and the superstring measure is studied in the recent preprint \cite{FKP}. Periods could also be defined for susy curves with Ramond punctures, cf. \cite{WittenPunctures}; in this more general case, which we do not discuss, the period domain is no longer an ordinary space, but a super analogue of the Siegel upper half space. A reference about the super Siegel upper half space is \cite{FioresiSiegel}.

\begin{subsection}*{Acknowledgements}
This project started during the Supermoduli workshop, which took place in Stony Brook on May 2015. We would like to thank the organizers and the lecturers for the stimulating environment. We had the pleasure and the benefit of conversation about the topics of this paper with A. Bruno, T. Covolo, R. Donagi, S. Kwok, R. Fioresi and E. Witten.
We thank U. Bruzzo, D. Hern\'{a}ndez Ruip\'{e}rez and A. Polishchuk for useful comments about the first version of this paper.
\end{subsection}

\end{section}

\begin{section}{Complex superspaces}\label{sec:prel}

The aim of this section is to recall the definition and the basic properties of complex superspaces, and prove a few facts needed for the theory of super stacks. A \textbf{superspace} is a locally superringed space $(X,\O_X)$, i.e. $X$ is a topological space (called the underlying topological space) and $\O_X=\O_{X,0}\oplus \O_{X,1}$ is a sheaf of supercommutative $\mathbb{C}$-superalgebras (called the structure sheaf) such that the stalk $\O_{X,x}$ at any point $x\in X$ is a local super ring, i.e. it has a unique maximal homogenous ideal $\m_x$. When no confusion seems to arise, we will denote the superspace $(X,\O_X)$ simply by $X$. 

A morphism of superspaces from $(X,\O_X)$ to $(Y,\O_Y)$ is a pair $(f,f^{\sharp})$ consisting of a continuous map $f:X\to Y$ and a map $f^{\sharp}:\O_Y\to f_*\O_X$ of  sheaves of  $\C$-superalgebras (thus preserving the parity) such that for any point $x\in X$ the induced homomorphism $f^{\sharp}_x:\O_{Y,f(x)}\to \O_{X,x}$ is local, i.e. $f^{\sharp}_x(\m_{f(x)})\subset \m_x$. When no confusion seems to arise, we will denote such a morphism simply by $f$. 

Coordinates at a point $x$ of superspace $X$ are homogeneous elements of $\m_x\setminus \m_x^2$ which generate the ideal $\m_x$. Given a morphism $f\colon X\to Y$, vertical or relative coordinates at a point $x$ in $X$ are homogeneous elements of $\m_x\setminus \m_x^2$ whose image generate $\m_x/f^{\sharp}_x(\m_{f(x)})$.

An (ordinary) \emph{space}, i.e. a locally ringed space $(M,\O_M)$ where $M$ is a topological space and $\O_M$ is a sheaf of commutative $\C$-algebras such that $\O_{M,x}$ is a local ring for every $x\in M$, is in a natural way a superspace by regarding  $\O_M$ as a sheaf of supercommutative $\C$-superalgebras concentrated in degree $0$. In this way, we get a fully faithful embedding $i$ of the category of  spaces into the category of superspaces whose essential image consists of all the superspaces $X$ such that $\O_{X,1}=0$  (such superspace will be called \emph{even} or \emph{bosonic}). We are going to show that the inclusion $i$ has a left and a right adjoint; in particular, this implies that $i$ respects the fibers products (which are defined via the usual tensor product of sheaves in each of  the two categories).

Given a superspace $X$, we will denote by $\n_X$ the ideal sheaf in $\O_X$ generated by the elements of odd degree, i.e. $\n_X=\O_{X,1}^2\oplus \O_{X,1}\subset \O_{X,0}\oplus \O_{X,1}$.  
To any superspace $X$, we can associate the bosonic superspace (i.e. the space)  $X_{\bos}=(X,\O_X/\n_X)$, which  is called the \emph{bosonic truncation} of $X$. There is a natural  closed embedding of $X_{\bos}\hookrightarrow X$ which is an identity on the underlying topological spaces and it is the quotient map $\O_X\twoheadrightarrow \O_X/\n_X$ on the sheaf of functions. The functor $X\mapsto X_{\bos}$ is a right adjoint of the inclusion $i$. In particular, the inclusion $X_{\bos}\hookrightarrow X$ is universal with respect to morphism from spaces into $X$. Moreover,  the bosonic truncation respects  fibre products since it has a left adjoint; in symbols, we have that $(X\times_Y Z)_{\bos}=X_{\bos}\times_{Y_{\bos}}Z_{\bos}$.
More generally, for any $n\geq 1$, we can consider the closed superspace $X^{(n)}:=(X,\O_X/\n_X^n)$, which is called the \emph{$n$-th infinitesimal neighborhood} of $X_{\bos}$ in $X$.

Any superspace $X$ admits a \emph{canonical automorphism} $\Gamma_X$ (or simply $\Gamma$ when no confusion arises) which is the identity on the topological space $X$ and on the even degree part $\O_{X,0}$ of the structure sheaf $\O_X$, while it acts as $-1$ on the odd degree part $\O_{X,1}$ of the structure sheaf $\O_X$. Remark that any morphism $f\colon X\to Y$ is $\Gamma$-equivariant, i.e. $\Gamma_Y\circ f=f\circ \Gamma_X$. Given a superspace $X=(X,\O_X)$, the quotient $X/\Gamma$ (which we call the \emph{bosonic quotient}) by the canonical automorphism is the superspace $(X,\O_X^\Gamma)$, where $\O_X^{\Gamma}$ is the subsheaf of $\O_X$ consisting of invariants elements. By the definition of $\Gamma$, $\O_X^{\Gamma}$ is a sheaf of supercommutative $\C$-superalgebras concentrated in degree $0$, so that $X/\Gamma$ is a bosonic superspace (i.e. a space). Note that there is a natural surjective map $X\twoheadrightarrow X/\Gamma$ which is the identity on the underlying topological spaces and it is the inclusion $\O_X^{\Gamma}\hookrightarrow \O_X$ at the level of structure sheaves. The functor $X\mapsto X/\Gamma$ is a left adjoint of the inclusion $i$. In particular, the morphism $X\twoheadrightarrow X/\Gamma$ is universal with respect to morphisms from $X$ into  spaces. 

Note that the composition 
$$X_{\bos} \hookrightarrow X \twoheadrightarrow X/\Gamma, $$
is a morphism of spaces such that $(X_{\bos})_{\red}=(X/\Gamma)_{\red}$, where for a space $Z$ we denote by $Z_{\rm red}$ the space whose underlying topological space is the same as the one of $Z$ and whose structure sheaf  $\O_{Z_{\red}}$ is obtained from $\O_Z$ by quotienting out the nilpotent elements.


We will (most of the times) be working with complex superspaces, which are superspaces  locally modeled on  closed subspaces of complex superdomains, as we are now going to define.
For any $p,q,\geq 0$, the superspace $\C^{p|q}$ is $(\C^p, \O_{\C^p}\otimes \bigwedge^* V$), where $\O_{\C^p}$ is the sheaf of holomorphic functions on $\C^p$, $V$ is a $q$ dimensional complex vector space, and we give degree zero to the element of $\O_{\C^p}$ and degree one to the element of $V$. Remark that $(\C^{p|q})_{\bos}=\C^p$. A \emph{complex superdomain} is a superspace which is isomorphic to an open subset of 
$\C^{p|q}$, with the induced structure sheaf.  
Given a homogenous ideal sheaf  $\mathcal{I}$ on a complex superdomain $U \subset \C^{p|q}$, the closed subspace of $U$ defined by $\I$ is the superspace whose underlying topological space is the closed subspace of $U$ defined by the ideal sheaf $(\I+\n_{U})/\n_{U}$ of $\O_U/\n_U$ and whose structure sheaf is the restriction of the sheaf $\O_{U}$. A superspace is called \emph{affine} if it is isomorphic to a closed subspace of $\C^{p|q}$; it is called \emph{quasi-affine} if it is isomorphic to a closed subspace of a complex superdomain.

A \textbf{complex superspace} is a superspace which is locally isomorphic to a quasi-affine superspace. More intrinsically, a superspace $(X,O_X$) is a complex superspace if and only if $(X,\O_{X,0})$ is complex space and $\O_{X,1}$ is a coherent $O_{X,0}$-module (see \cite[Prop. 1.1.3]{Vaintrob}).
Note that bosonic complex superspaces are (ordinary) complex spaces  and the bosonic truncation and quotient of a complex superspace  is a complex space. Moreover, the  infinitesimal neighborhoods of a complex superspace are again complex superspaces and $X^{(n)}=X$ for $n\gg 0$. 

A complex superspace  is smooth, or equivalently it is a \textbf{complex supermanifold},  if
\begin{itemize}
\item $X_{\bos}=(X,\O_X/\n_X)$ is a complex manifold;
\item the sheaf $\n_X/\n_X^2$ is a locally free sheaf of $\O_X/\n_X$-modules;
\item  $\O_X$ is locally  isomorphic to the $\Z/2\Z$-graded exterior algebra $\Lambda^{\bullet}(\n_X/\n_X^2)$ over $\n_X/\n_X^2$.
\end{itemize}
A complex supermanifold is said to be of dimension $p|q$ (for some integers $p,q\geq 0$) if $\dim X_{\bos}=p$ and $\rk(\n_X/\n_X^2)=q$. Note that a connected supermanifold has always  dimension $p|q$, for some (uniquely determined) $p, q\geq 0$, and that a complex supermanifold has dimension $p|q$ if each of its connected components has  dimension $p|q$. Equivalently, a complex superspace is smooth of dimension $p|q$ if and only if it is locally isomorphic to $\C^{p|q}$. 

Observe that bosonic complex supermanifolds (i.e. complex supermanifolds of dimension $p|0$) are (ordinary) complex manifolds, and that the bosonic truncation of a $p|q$ dimensional complex supermanifold is a $p$ dimensional complex manifold. On the other hand, the bosonic quotient $X/\Gamma$ of a complex supermanifold $X$ is a complex space whose reduced structure $(X/\Gamma)_{\rm red}$ is isomorphic to $X_{\bos}$. However, $X/\Gamma$ is rarely a complex manifold (or equivalently is reduced): for example, if $X$ is a complex supermanifold of dimension $p|q$  then $X/\Gamma$ is a complex manifold  if and only if $q\leq 1$.

Given integers $p,q\geq 0$, we have the free sheaf   $\O_X^{p|q}:=\O_X^p|\O_X^q:=\O_X^p\oplus \Pi \O_X^q$ of rank $p|q$, where $\Pi$ is the parity change functor. In other words, $\O_X^{p|q}$ is a free $\O_X$-module having $p$ even generators and $q$ odd generators. A \emph{locally free sheaf} $F$ of rank $p|q$ on a complex superspace is a sheaf of $\O_X$-modules  that is locally isomorphic  to $\O_X^{p|q}$, i.e. for any $x\in X$ there exists an open subset $x\in U\subset X$ such that $F_{|U}\cong \O_U^{p|q}$ (see \cite[Appendix B.3]{Fioresi}). A \emph{coherent sheaf} $F$ on a complex superspace $X$ is  a sheaf of $\O_X$-modules which is locally of finite presentation, i.e. such that for every $x\in X$ there exists an open subset $x\in U\subset X$ and an exact sequence of even homomorphisms $\O_X^{n|m}\to \O_X^{p|q}\to F\to 0$ (see \cite[Sec. 1.3]{Vaintrob}).
We denote by $F_{\bos}$ the pull-back of $F$ to $X_{\bos}$. The sheaf $F_{\bos}$ is a $\mathbb{Z}_2$-graded coherent sheaf of $\O_{X_{\bos}}$-modules, i.e. it admits a splitting 
$F_{\bos}=F_{\bos}^+\oplus F_{\bos}^-$ into an even and an odd coherent subsheaf.   We will sometimes write $F_{\bos}=F_{\bos}^+|F_{\bos}^-$ to denote the splitting of $F_{\rm bos}$ into its even and odd part. 
 If $F$ is locally free of rank $p|q$, then $F_{\bos}=F_{\bos}^+| F_{\bos}^-$, with $F_{\bos}^+$ and $F_{\bos}^-$ locally free of rank, respectively, $p$ and $q$. A line bundle $L$ is a locally free sheaf of rank either $1|0$ or $0|1$.

It is possible to construct a complex superspace  out of a complex space $M$ and a locally free sheaf $E$: the topological space is $M$, and the structure sheaf is the $\Z/2\Z$-graded exterior algebra $\bigwedge^{\bullet} E$. We denote this  complex supermanifold   by $M_E=(M,\bigwedge^{\bullet} E)$. In this case, $(M_E)_{\bos}=M$ and $\n_{M_E}/\n_{M_E}^2=E$. The inclusion $\O_M\hookrightarrow \bigwedge^{\bullet} E$ induces a section of the natural inclusion $M=(M_E)_{\bos} \hookrightarrow M_E$; this section is  called the canonical splitting of $M_E$. 
A complex superspace   $X$ is called \emph{split} \footnote{this is called decomposable in \cite[Chap. 4.4]{Manin1} and it is stronger than being split in the sense of \cite[1.1.1]{Vaintrob}, which  in particular does not imply that $\n_X/\n_X^2$ is locally free.} if it is isomorphic to  $M_E$, for some complex space $M$ and some locally free sheaf $E$ on $M$ (and then necessarily we must have that $M=X_{\bos}$ and $E=\n_X/\n_X^2$).
 Note that a necessary condition for a complex superspace $X$ to be split is  that $\n_X/\n_X^2$ is a locally free sheaf on $X_{\bos}$, which is a rather strong property. By definition, a complex supermanifold is locally split but, in general, not split (see e.g. \cite[Chap. IV.10]{Manin1}). Any complex superspace such that $\n_X/\n_X^2$ is a line bundle on $X_{\bos}$ (for example, any $n|1$ dimensional complex supermanifold)  is split: the proof of  \cite[Chap. IV.8]{Manin1} for $n|1$ supermanifolds works verbatim for the more general case.

The \emph{tangent bundle} $T_X$ of complex superspace  $X$ is the sheaf of derivation of $\O_X$. It is a coherent sheaf and it is locally free if $X$ is smooth. For a split complex supermanifold $M_E$, we have that 
the $(T_{M_E})_{\rm bos}=TM | E^{\vee}$.

Let us now recall the following properties of a morphism $f\colon X\to B$  of complex superspaces. We say that:
\begin{itemize}
\item $f$ is \emph{proper} (resp. \emph{separated}, resp.  \emph{finite}, resp. \emph{surjective}) if the underlying map of topological spaces   is proper (resp. \emph{separated}, resp. \emph{finite}, resp. \emph{surjective}).
\item  $f$ is \emph{\'{e}tale} if, for any $x\in X$, the homomorphism of local superrings $f^{\sharp}_ x:\O_{B,f(x)}\to \O_{X, x}$ is an isomorphism. 
\item $f$ is \emph{smooth} if, for any $x\in X$, there exist a open neighborhood $U$ of $x$ and a open neighborhood $V$ of $f(x)$ such that $f(U)\subseteq V$ and $f_{|U}: U\to V$ is the composition of an \'{e}tale map from $U$ to $V\times \mathbb{C}^{p|q}$, for some $p$ and $q$, and the projection to $V$.
\end{itemize}

An \'{e}tale morphism is smooth: it is enough to take $p=q=0$ in the definition of smooth morphism. Let us stress that if $f$ is a smooth morphism, then for any point $b \in B$ the fibre $f^{-1}(b)$ is smooth. The following Remark is straightforward and it is recorded here for future reference.

\begin{remark}\label{R:incl-et}
If $f:X\to S$ is a morphism of complex spaces which satisfies one of the above properties, then if we interpret $f$ as a morphism of complex superspaces (via the natural inclusion $i$) then it satisfies the corresponding property. 
\end{remark}

The following Proposition is an important direct consequence of the definition of \'etale morphism.

\begin{proposition}\label{P:etale}
If $f:X\to B$ is an \'etale morphism of complex superspaces, then we have that
\noindent
\begin{enumerate}[(i)]
\item \label{P:etale1}  $f_{\bos}:X_{\bos} \to B_{\bos} $ is an \'etale morphism of complex spaces;
\item \label{P:etale2}   $f/\Gamma:X/\Gamma \to B/\Gamma$ is an \'etale morphism of complex spaces.
\end{enumerate}
\end{proposition}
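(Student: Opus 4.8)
The plan is to reduce both statements to the defining property of an étale morphism, namely that $f^\sharp_x\colon \O_{B,f(x)}\to \O_{X,x}$ is an isomorphism of local superrings for every $x\in X$, together with the explicit descriptions of the functors $(-)_{\bos}$ and $-/\Gamma$ given in Section~\ref{sec:prel}. In both cases the underlying continuous map of topological spaces is unchanged (both functors are the identity on underlying spaces), so it suffices to check the condition on stalks of structure sheaves, and the key point is that both $\n_X$ and the $\Gamma$-invariant subsheaf $\O_X^\Gamma$ are constructed \emph{functorially from $\O_X$ by operations that commute with taking stalks and with pulling back along $f^\sharp$}.

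For part~\eqref{P:etale1}: fix $x\in X$ and set $b=f(x)$. Since $\n_X$ is generated by the odd part of $\O_X$, its stalk at $x$ is $\n_{X,x}=\O_{X,1,x}^2\oplus\O_{X,1,x}$, and likewise for $\n_B$ at $b$. The isomorphism $f^\sharp_x\colon\O_{B,b}\xrightarrow{\ \sim\ }\O_{X,x}$ is parity-preserving, hence it carries $\O_{B,1,b}$ isomorphically onto $\O_{X,1,x}$ and therefore carries $\n_{B,b}$ isomorphically onto $\n_{X,x}$. Passing to quotients, $f^\sharp_x$ induces an isomorphism $(\O_B/\n_B)_{b}=\O_{B,b}/\n_{B,b}\xrightarrow{\ \sim\ }\O_{X,x}/\n_{X,x}=(\O_X/\n_X)_{x}$, which is exactly the stalk map $(f_{\bos})^\sharp_x$. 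As this is an isomorphism for every $x$, and the underlying map $X_{\bos}\to B_{\bos}$ is the same continuous map as $X\to B$, the morphism $f_{\bos}$ is étale; it is a morphism of complex spaces by the remarks in the excerpt, since $(-)_{\bos}$ sends complex superspaces to complex spaces.

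For part~\eqref{P:etale2}: again fix $x\in X$, $b=f(x)$. The isomorphism $f^\sharp_x$ intertwines the canonical automorphisms, $f^\sharp_x\circ\Gamma_{B,b}=\Gamma_{X,x}\circ f^\sharp_x$ (every morphism is $\Gamma$-equivariant), so it restricts to an isomorphism on invariants $\O_{B,b}^{\Gamma}\xrightarrow{\ \sim\ }\O_{X,x}^{\Gamma}$. The only subtlety is that taking $\Gamma$-invariants commutes with taking stalks, i.e. $(\O_X^\Gamma)_x=(\O_{X,x})^{\Gamma}$: this holds because $\Gamma$ is an isomorphism of sheaves and invariants are the equalizer of $\id$ and $\Gamma$, and filtered colimits (stalks) are exact, hence commute with finite limits such as equalizers. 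Granting this, the stalk map $(f/\Gamma)^\sharp_x$ is identified with the restriction of $f^\sharp_x$ to invariants, which is an isomorphism, so $f/\Gamma$ is étale; it is a morphism of complex spaces because $X/\Gamma$ and $B/\Gamma$ are bosonic, hence ordinary complex spaces.

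The main (and only real) obstacle is the bookkeeping lemma just mentioned: that $\n$ and $\O_X^\Gamma$, and hence the quotient/invariant structure sheaves, behave well on stalks and are compatible with $f^\sharp$. Once that is in place, both parts are immediate from the fact that an isomorphism of $\Z/2\Z$-graded local rings respects the even/odd decomposition and hence all the canonical constructions built from it. I would state the stalk-compatibility explicitly as a one-line observation and then give the two arguments above essentially verbatim.
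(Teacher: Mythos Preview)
Your proposal is correct and follows essentially the same route as the paper: both reduce to the stalkwise isomorphism $f^\sharp_x$, then observe that it preserves $\n$ (hence induces an isomorphism on quotients for $(-)_{\bos}$) and is $\Gamma$-equivariant (hence induces an isomorphism on invariants for $-/\Gamma$). Your explicit remark that taking $\Gamma$-invariants commutes with stalks is a small refinement the paper leaves implicit, but otherwise the arguments coincide.
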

\begin{proof}
By assumption, for any point  $x\in X$, the homomorphism of local superrings $f^{\sharp}_ x:\O_{B,f(x)}\to \O_{X, x}$ is an isomorphism. This implies that 
$f^{\sharp}_ x(\m_{f(x)})=\m_x$  and hence that the induced local homomorphism 
$$(f_{\bos})^{\sharp}_ x:\O_{B_{\bos},f(x)}=\O_{B,f(x)}/\m_{f(x)}\to \O_{X, x}/\m_x=\O_{X_{\bos}, x}$$
is an isomorphism, i.e. that $f_{\bos}$ is \`etale. Moreover, since $f^{\sharp}_ x$ is equivariant with respect to the action of the canonical automorphism $\Gamma$, by taking invariants we get that the induced local   homomorphism 
$$(f/\Gamma)^{\sharp}_ x:\O_{B/\Gamma,f(x)}=\O_{B,f(x)}^{\Gamma}\to \O_{X, x}^{\Gamma}=\O_{X/\Gamma, x}$$
is an isomorphism, i.e. that $f/\Gamma$ is \`etale. 
\end{proof}

We can generalize property \eqref{P:etale1} of the previous Proposition to smooth morphisms. 

\begin{proposition}\label{P:smooth}
If $f:X\to B$ is a smooth morphism of complex superspaces, then  $f_{\bos}:X_{\bos} \to B_{\bos} $ is a smooth morphism of complex spaces.
\end{proposition}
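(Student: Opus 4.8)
The plan is to reduce the statement to the local factorization that \emph{defines} a smooth morphism, and then to push this factorization through the bosonic truncation functor $(-)_{\bos}$, using the two facts already established in the excerpt: that $(-)_{\bos}$ commutes with fibre products, and that it sends \'etale morphisms to \'etale morphisms (Proposition \ref{P:etale}\eqref{P:etale1}).

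Concretely, first I would fix a point $x\in X_{\bos}$, recalling that $X_{\bos}$ and $X$ (resp. $B_{\bos}$ and $B$) share the same underlying topological space. By the definition of smoothness of $f$, there are open neighbourhoods $U\ni x$ in $X$ and $V\ni f(x)$ in $B$ with $f(U)\subseteq V$, together with a factorization
$$f_{|U}\colon\; U\xrightarrow{\ g\ } V\times\C^{p|q}\xrightarrow{\ \pi\ } V$$
with $g$ \'etale and $\pi$ the projection, for some $p,q\geq 0$. Next I would apply $(-)_{\bos}$ to this factorization. Since $(-)_{\bos}$ is the identity on underlying topological spaces and merely restricts structure sheaves on opens, $U_{\bos}$ and $V_{\bos}$ are open neighbourhoods of $x$ and $f(x)$ in $X_{\bos}$ and $B_{\bos}$, and $(f_{|U})_{\bos}=(f_{\bos})_{|U_{\bos}}$. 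Using that $(-)_{\bos}$ commutes with fibre products and that $(\C^{p|q})_{\bos}=\C^p$, I would identify $(V\times\C^{p|q})_{\bos}=V_{\bos}\times\C^p$, under which $\pi_{\bos}$ becomes the projection onto $V_{\bos}$; and by Proposition \ref{P:etale}\eqref{P:etale1}, $g_{\bos}\colon U_{\bos}\to V_{\bos}\times\C^p$ is \'etale. Thus $(f_{\bos})_{|U_{\bos}}$ factors as an \'etale morphism followed by a projection $V_{\bos}\times\C^p\to V_{\bos}$, which is exactly the defining condition for $f_{\bos}$ to be smooth at $x$; as $x$ was arbitrary, $f_{\bos}$ is smooth.

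I do not expect a serious obstacle here: the definition of a smooth morphism of superspaces is tailored precisely so that odd nilpotent directions cannot ``hide'' a failure of smoothness after truncation. The only points requiring a little care are the compatibility $(V\times\C^{p|q})_{\bos}=V_{\bos}\times\C^p$ (which combines the fibre-product compatibility of $(-)_{\bos}$ with the computation $(\C^{p|q})_{\bos}=\C^p$) and the observation that the classical notion of a smooth morphism of complex spaces is indeed the ``\'etale followed by a linear projection'' one, compatibly with Remark \ref{R:incl-et}; once these are in place the argument closes at once. Note finally that, in contrast with Proposition \ref{P:etale}, there is no analogous statement for $f/\Gamma$ here, since the bosonic quotient does not preserve fibre products and so the same reasoning does not apply to it.
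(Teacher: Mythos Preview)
Your proof is correct and follows essentially the same approach as the paper: both apply the bosonic truncation to the defining local factorization $U\xrightarrow{g} V\times\C^{p|q}\to V$, using that $(-)_{\bos}$ preserves \'etale morphisms (Proposition~\ref{P:etale}\eqref{P:etale1}) and fibre products to obtain the factorization $U_{\bos}\xrightarrow{g_{\bos}} V_{\bos}\times\C^p\to V_{\bos}$. Your version is simply more explicit about tracking the point $x$ and the compatibility of open subsets with $(-)_{\bos}$.
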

\begin{proof}
By hypothesis, $f$ is locally the composition of an \'etale map $g\colon U\to V\times \mathbb{C}^{p|q}$ and the projection to $V$. Since the bosonic truncation preserves both etalness (by Proposition \ref{P:etale}\eqref{P:etale1}) and fibre products, we deduce that  $f_{\bos}$ is locally the composition of the \'{e}tale map $g_{\bos}\colon U_{\bos}\to (V\times \mathbb{C}^{p|q})_{\bos}=V_{\bos}\times \mathbb{C}^p$ and the projection to $V_{\bos}$; hence $f_{\bos}$ is smooth. 

\end{proof}

The following lemma will be a key ingredient in the study of stacks for the \'etale topology.

\begin{lemma}\label{lem:quoziente}
Let $f\colon X \to S$ and $g\colon Y \to S$ be two morphisms of complex superspaces. There exists a natural morphism $\pi \colon X/\Gamma\times_{S/\Gamma}Y/\Gamma \to (X\times_S Y)/\Gamma$, which is an isomorphism at the level of topological spaces; furthermore, if $f$ is \'etale then $\pi$ is an isomorphism.
\end{lemma}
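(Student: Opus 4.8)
Write $q_Z\colon Z\to Z/\Gamma$ for the natural surjection. The plan is to obtain $\pi$ as the canonical comparison morphism attached to the fibered product, and then to prove the two assertions by reducing the second to the case of an open immersion. The two morphisms of complex superspaces $q_X\circ\mathrm{pr}_X\colon X\times_S Y\to X/\Gamma$ and $q_Y\circ\mathrm{pr}_Y\colon X\times_S Y\to Y/\Gamma$ become equal after composing with $f/\Gamma$, resp.\ $g/\Gamma$: indeed $(f/\Gamma)\circ q_X=q_S\circ f$ and $(g/\Gamma)\circ q_Y=q_S\circ g$ by naturality of the quotient map, while $f\circ\mathrm{pr}_X=g\circ\mathrm{pr}_Y$. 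Hence, by the universal property of the fibered product (and since $i$ preserves fibered products, so that $X/\Gamma\times_{S/\Gamma}Y/\Gamma$, a complex space, is a legitimate target), they glue to a morphism $X\times_S Y\to X/\Gamma\times_{S/\Gamma}Y/\Gamma$. Since $q_{X\times_S Y}$ is universal among morphisms from $X\times_S Y$ into complex spaces ($-/\Gamma$ being left adjoint to $i$), this factors uniquely through a morphism
$$\pi\colon (X\times_S Y)/\Gamma\longrightarrow X/\Gamma\times_{S/\Gamma}Y/\Gamma,$$
and one checks that $\mathrm{pr}_{X/\Gamma}\circ\pi$ (resp.\ $\mathrm{pr}_{Y/\Gamma}\circ\pi$) is the morphism obtained by applying $-/\Gamma$ to $\mathrm{pr}_X$ (resp.\ $\mathrm{pr}_Y$). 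On underlying topological spaces $\pi$ is an isomorphism: since $-/\Gamma$ and $(-)_{\bos}$ are the identity on underlying spaces and $(-)_{\bos}$ preserves fibered products, both source and target of $\pi$ have underlying space $X_{\bos}\times_{S_{\bos}}Y_{\bos}$, and $\pi$ induces the identity on it.

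\textbf{The \'etale case.} Suppose $f$ is \'etale; by Proposition~\ref{P:etale}\eqref{P:etale2} so is $f/\Gamma$. I claim $\pi$ is \'etale; it is then an isomorphism, because an \'etale morphism which is a homeomorphism on underlying spaces induces isomorphisms on all stalks of structure sheaves, hence is an isomorphism of complex spaces. To prove the claim, first note $\mathrm{pr}_Y\colon X\times_S Y\to Y$ is \'etale: an \'etale morphism of complex superspaces is a local isomorphism, so $X$ is covered by opens $U$ with $f|_U$ an isomorphism onto an open $V\subseteq S$, and over $U\times_S Y=\mathrm{pr}_X^{-1}(U)$ the projection $\mathrm{pr}_Y$ is identified, via $f|_U$, with the open immersion $g^{-1}(V)\hookrightarrow Y$. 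Applying $-/\Gamma$ and Proposition~\ref{P:etale}\eqref{P:etale2}, the morphism $(X\times_S Y)/\Gamma\to Y/\Gamma$ is \'etale. The same argument applied to the \'etale morphism $f/\Gamma$ of complex spaces shows $\mathrm{pr}_{Y/\Gamma}\colon X/\Gamma\times_{S/\Gamma}Y/\Gamma\to Y/\Gamma$ is \'etale. Now $\pi$ is a morphism over $Y/\Gamma$ (by the compatibility above, $\mathrm{pr}_{Y/\Gamma}\circ\pi$ equals the image of $\mathrm{pr}_Y$ under $-/\Gamma$) between two complex spaces each \'etale over $Y/\Gamma$; comparing stalk maps, $\pi$ is \'etale, and we are done.

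\textbf{Main obstacle.} The only input that is not pure formalism — and the point I would be most careful about — is the analytic fact that a morphism of complex superspaces inducing isomorphisms on all stalks of the structure sheaves is a local isomorphism; this is what makes ``\'etale'' coincide with ``local isomorphism'', hence local on the source and stable under the base changes used above. It is classical for complex spaces, and follows for superspaces from the bosonic case (Proposition~\ref{P:etale}\eqref{P:etale1}) together with the stalk isomorphisms. If one prefers to avoid it, the \'etale case can be done by a direct stalk computation: for $f$ \'etale one has $\O_{X\times_S Y,(x,y)}=\O_{X,x}\widehat{\otimes}_{\O_{S,f(x)}}\O_{Y,y}\cong\O_{Y,y}$, and since taking $\Gamma$-invariants is exact ($\Gamma$ generates a group of order two and we are in characteristic zero) and commutes with the completed tensor product, the analogous identification holds after $-/\Gamma$; one then reads off directly that $\pi$ is an isomorphism on every stalk.
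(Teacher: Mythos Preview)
Your proof is correct. The construction of $\pi$ is identical to the paper's (and, like the paper's own proof, you build $\pi$ in the direction $(X\times_S Y)/\Gamma\to X/\Gamma\times_{S/\Gamma}Y/\Gamma$, which is the natural one and the one used everywhere else in the paper; the direction displayed in the statement is a typo).

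For the \'etale case the paper takes precisely the short route you sketch as your alternative at the end: since $f$ \'etale means $f^{\sharp}_x\colon \O_{S,z}\stackrel{\sim}{\to}\O_{X,x}$, the stalk of $(X\times_S Y)/\Gamma$ at $(x,y)$ is $(\O_{X,x}\otimes_{\O_{S,z}}\O_{Y,y})^{\Gamma}\cong\O_{Y,y}^{\Gamma}$, and the same simplification on the other side gives $\O_{X,x}^{\Gamma}\otimes_{\O_{S,z}^{\Gamma}}\O_{Y,y}^{\Gamma}\cong\O_{Y,y}^{\Gamma}$; so $\pi^{\sharp}_{(x,y)}$ is an isomorphism for every $(x,y)$, and together with the topological statement this finishes. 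Your primary argument---factoring $\pi$ as a morphism over $Y/\Gamma$ between two spaces \'etale over $Y/\Gamma$, hence \'etale, hence an isomorphism once it is a homeomorphism---is a genuinely different packaging. It is longer, and it leans on the equivalence ``\'etale $=$ local isomorphism'' that you correctly flag as the one non-formal input; what it buys is that the base-change stability of \'etaleness and of $-/\Gamma$ are made explicit, which is conceptually pleasant. Either way the core computation is the same stalk identification.
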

\begin{proof}
From the quotient maps $X\to X/\Gamma$, $S\to S/\Gamma$ and $Y\to Y/\Gamma$, we get a morphism $\rho\colon X\times_S Y\to X/\Gamma\times_{S/\Gamma}Y/\Gamma$. The codomain is an ordinary complex space, so this morphism has to factor via a morphism $\pi \colon(X\times_S Y)/\Gamma \to  X/\Gamma\times_{S/\Gamma} Y/\Gamma $, whose existence was claimed in the statement.

To show that $\pi$ is an isomorphism at the level of topological spaces, it is enough to prove that the same property for  the morphism of ordinary complex spaces
$$ \rho_{\bos}: X_{\bos}\times_{S_{\bos}} Y_{\bos}=(X\times_S Y)_{\rm bos} \to X\times_S Y\stackrel{\rho}{\longrightarrow} X/\Gamma\times_{S/\Gamma}Y/\Gamma.$$ 

This follows from the fact that the bosonic truncation and the bosonic quotient of a complex superspace have the same underlying topological space, together with the fact that the topological space underlying the fibre product of two complex spaces is the fibre product of their underlying topological spaces (see for example \cite[III.1.6]{GK}).

We now focus on the last part of the lemma. Consider  the induced morphism of structure sheaves 
$$\pi^{\sharp}:\O_{(X\times_S Y)/\Gamma}= (\O_X\otimes_{\O_S}\O_Y)^{\Gamma} \to \O_{X/\Gamma\times_{S/\Gamma}Y/\Gamma}=\O_X^{\Gamma}\otimes_{\O_S^{\Gamma}}\O_Y^{\Gamma},$$
where, with a slight abuse of notation, we identify $\O_X$ with its pull-back to $X\times_S Y$ and similarly for $\O_Y$ and $\O_S$.
For any  point $x\in X$ and $y\in Y$ such that $f(x)=g(y)=z\in S$, consider the induced morphism of local superrings
$$\pi^{\sharp}_{(x,y)}:\O_{(X\times_S Y)/\Gamma,(x,y)}= (\O_{X,x}\otimes_{\O_{S,z}}\O_{Y,y})^{\Gamma} \to \O_{X/\Gamma\times_{S/\Gamma}Y/\Gamma,(x,y)}=\O_{X,x}^{\Gamma}\otimes_{\O_{S,z}^{\Gamma}}\O_{Y,y}^{\Gamma}.$$
If $f$ is \'etale, then $\pi^{\sharp}_{(x,y)}$ is an isomorphism for every $(x,y)\in X\times_S Y$, since $f^{\sharp}_x:\O_{S,z}\to \O_{X,x}$ is an isomorphism. This implies that the morphism $\pi^{\sharp}$ is an isomorphism, and hence that $\pi$ is an isomorphism as required. 

\end{proof}

\begin{remark}\label{R:bad-Gamma}
The bosonic quotient  does not preserve in general neither fibre products nor smoothness: the space $(\mathbb{C}^{0|1}\times \mathbb{C}^{0|1})/\Gamma\cong \C^{0|2}/{\Gamma}=\displaystyle \Spec \C[x]/(x^2)$ is not isomorphic to $(\mathbb{C}^{0|1}/\Gamma)\times (\mathbb{C}^{0|1}/\Gamma)\cong \Spec(\C)$, where the fibre product is taken over $\Spec(\C)$; a projection $\pi$ from $\mathbb{C}^{0|3}$ to $\mathbb{C}^{0|2}$ is smooth, but $\pi/\Gamma$ is not smooth.
\end{remark}

\end{section}

\begin{section}{Superstacks}\label{sec:superstack}

In this section we introduce  complex superstacks  and we discuss their relationship with complex (ordinary) stacks. For the classical theory of (ordinary) stacks, see  \cite{Vis} or \cite[Chap. XII]{ACG} and the references therein.

\subsection{Categories fibered in groupoids}\label{S:CFG}

Let $\S$ be the category of complex superspaces.

\begin{definition}\label{D:CFG}
A \emph{category fibered in groupoids} (or  CFG for short) over $\S$ is a pair $\M=(\cC,p)=(\cC_M,p_M)$ consisting of a category $\cC_M=\cC$ together with a functor (called the fibration functor) 
$$ p_{\M}=p \colon \cC \to \S$$
satisfying the following two axioms:
\begin{enumerate}
\item \label{axiom1} Given a morphism $f\colon T \to S$ in $\S$ and an object $\eta$ in $\cC$ such that $p(\eta)=S$, there exists an object $\xi$ in $\cC$ (called a \emph{pull-back} of $\eta$ along $f$) and a morphism $\phi\colon \xi\to \eta$ in $\cC$ such that $p(\phi)=f$. 
\item \label{axiom2} All morphisms $\phi\colon \xi\to \eta$ in $\cC$ are cartesian in the following sense. Given a morphism $\phi'\colon \xi'\to \eta$ in $\cC$ and a morphism $h\colon p(\xi')\to p(\xi)$ in $\S$ such that $p(\phi)\circ h =p(\phi')$, there exists a unique morphism $\psi\colon \xi'\to \xi$ in $\cC$ such that $h=p(\psi)$ and $\phi\circ \psi=\phi'$.
\end{enumerate}
\end{definition}
CFG's over $\S$ form a $2$-category, denoted by $\FIB_{\S}$,  in which $1$-morphisms are functors commuting with the fibration and $2$-morphisms between morphisms $F,G:\M\to \M'$ are natural transformation of functors $t:F\to G$ such that $p_{\M'}(t_{\eta})=\id_{p_{\M}(\eta)}$ for any $\eta\in \cC_{\M}$ (see  \cite[\href{http://stacks.math.columbia.edu/tag/02XS}{Tag02XS}]{Stacks}). 

The $2$-category $\FIB_{\S}$ admits \emph{fibre products}  (see \cite[\href{http://stacks.math.columbia.edu/tag/0041}{Tag 0041}]{Stacks}): given two morphisms $f_i:\M_i\to \N$ of CFG's over $\S$,  the fibre product of $\M_1\times_{\N} \M_2 $ is the CFG over $\S$ which is defined as it follows. The objects of the underlying category are quadruples $(S,\eta_1, \eta_2, \phi)$ with $S\in \S$,  $\eta_i\in \cC_{\M_i}(S)$ and $\phi: f_1(\eta_1) \stackrel{\cong}{\longrightarrow} f_2(\eta_2)$ is an isomorphism in $\N(U)$. A morphism between $(S,\eta_1, \eta_2, \phi)$ and $(S',\eta_1', \eta_2', \phi')$ is a pair $(\mu_1,\mu_2)$ where $\mu_i: \eta_i\to \eta_i'$ is a morphism in $\M_i$ such that $p_{\M_1}(\mu_1)=p_{\M_2}(\mu_2)$ and $f_2(\mu_2)\circ \phi= \phi'\circ f_1(\mu_1) $. The fibration functor  $p_{\M_1\times_{\N} \M_2}: \cC_{\M_1\times_{\N} \M_2} \to \S$ sends an object $(S,\eta_1, \eta_2, \phi)$ onto $S$. 
The projections $p_i: \M_1\times_{\N} \M_2 \to \M_i$ are the obvious forgetful morphisms and there is a natural transformation of functors $\Psi:f_1\circ p_1\to f_2\circ p_2$ which is is given on an object $\eta=(S, \eta_1, \eta_2,\phi)$ by $\Psi_{\eta}:=\phi:(f_1\circ p_1)(\eta)=f_1(\eta_1)\to f_2(\eta_2)=(f_2\circ p_2)(\eta)$. 
 The $2$-category $\FIB_{\S}$ admits a final object, which is given by $\S=(\S,\id_{\S})$. The fibre product of two CFG's $\M_1$ and $\M_2$, denoted by $\M_1\times \M_2$, is the fibre product of the two morphisms $\M_1\to \S$ and $\M_2\to \S$.

\vspace{0.1cm}

Given a CFG $\M$ over $\S$ and an object $S\in \S$, the \emph{fiber} of $\M$ over $S$ is the subcategory  $\M(S)$ of $\cC_{\M}$ whose objects are the objects $\eta$ of $\cC_{\M}$ such that $p(\eta)=S$, and the morphisms are the morphisms $\phi$ in $\cC$ such that $p_{\M}(\phi)=\id_S$. It follows from  axiom \eqref{axiom2} that $\M(S)$ is a groupoid, i.e. a category in which every morphism is invertible. 
Given a morphism $f:T\to S$ in $\S$ and an object $\eta\in \M(S)$, we choose a pull-back of $\eta$ along $f$ as in  \eqref{axiom1} which we denote by $f^*(\eta)$;
the collection of all such choices is called a cleavage of $\M$ and we implicitly assume that all our CFG are endowed with a cleavage.
Note that pull-backs are unique up to a unique isomorphisms by axiom \eqref{axiom2} so that most of the constructions that we are going to perform turns out to be independent of the choice of a cleavage. 
After the choice of a cleavage has been made, axiom \eqref{axiom2} guarantees that the association 
$\eta\in \M(S)\mapsto f^*(\eta)\in \M(T)$ is a functor, called a  \emph{pull-back functor} (it depends on the choice of a cleavage). Note however, that given two morphisms $V\stackrel{g}{\longrightarrow} T \stackrel{f}{\longrightarrow} S$ in $\S$, we will in general have that $(f\circ g)^*\neq g^*\circ f^*$ (for any choice of cleavage).
 In other words, the association $S\mapsto \M(S)$ is in general not a functor. However the association $S\mapsto \M(S)$   is  a pseudofunctor from the category $\S$ to the  $2$-category of groupoids, and this pseudofunctor recovers the original CFG together with the chosen cleavage  (see \cite[Sec. 3.1]{Vis} for a discussion of this equivalence).

\vspace{0.1cm}

We can associate to any complex superspace $X$ a CFG $\un X=(\cC_X,p_X)$ over $\S$ as follows. The objects of $\cC_X$ are morphisms $(f:S\to X)$ in $\S$ and the morphisms between $(f:S\to X)$ and $(f':S'\to X)$ are the morphisms $g:S\to S'$ in $\S$ such that $f=f'\circ g$. The fibration functor $p_X$ sends $(f:S\to X)$ onto $S$. Note that the fiber of $\un X$ over $S$ is the set of morphisms $\Hom(S,X)$ (regarded as  a trivial groupoid in which the only arrows are the identities) and a pull-back functor along $f:S\to T$ is the map $f^*=-\circ f :\Hom(T,X)\to \Hom(S,X)$ obtained by precomposing with $f$ (note that this defines a canonical  cleavage on $\un X$). 
Given any $X\in \S$ and any $\M\in \FIB_{\S}$, there is an equivalence of categories (called the \emph{$2$-Yoneda lemma}, see \cite[Sec. 3.6]{Vis}) 
\begin{equation}\label{E:Yoneda}
\begin{aligned}
\Hom_{\FIB_{\S}}(\un X, \M) & \stackrel{\cong}{\longrightarrow} \M(X), \\
\mu & \mapsto \mu(\id_X), \\
\end{aligned}
\end{equation}
whose inverse is obtained by sending $\eta\in \M(X)$ into the morphism $\mu_{\eta}$ which sends an object $(f:S\to X)$ of $\un X$ onto $f^*(\eta)\in \M(Y).$ In particular, when $\M=\un Y$ we get an equivalence of categories
\begin{equation}\label{E:fullembed}
\Hom_{\FIB_{\S}}(\un X, \un Y)  \stackrel{\cong}{\longrightarrow} \Hom_{\S}(X,Y).
\end{equation}
In other words, the association $X\mapsto \un X$ defines a fully faithful embedding of $\S$ into $\FIB_{\S}$.  

\vspace{0.1cm}

In a similar way, we can define the $2$-category $\FIB_{\S_{\rm ev}}$ of categories fibered in groupoids over the category $\S_{\rm ev}$ of even complex superspaces (i.e ordinary complex spaces) and the fully faithful embedding of $\S_{\rm ev}$ into $\FIB_{\S_{\rm ev}}$. 

\vspace{0.1cm}

The $2$-categories $\FIB_{\S}$ and $\FIB_{\S_{\rm ev}}$ are related by a pair of adjoint $2$-functors that  are constructed starting from the 
bosonic  quotient $/\Gamma: \S\to \S_{\ev}$ and the natural inclusion $i:\S_{\rm ev}\to \S$ via  fibre product of categories (see \cite[Tag 0040]{Stacks}). More precisely, the first $2$-functor (called the \textbf{natural inclusion}) is given by 
\begin{equation}\label{E:func-i}
\begin{aligned}
i: \FIB_{\S_{\rm ev}} & \longrightarrow \FIB_{\S}, \\
\N & \mapsto i(N):=(\cC_{\N}\times_{\S_{\rm ev}} \S, p_2) 
\end{aligned}
\end{equation} 
where the tensor product of categories is done with respect to the fibration functor $p_{\N}:\cC_{\N}\to \S_{\rm ev}$ and the bosonic quotient $/\Gamma:\S\to \S_{\rm ev}$, and $p_2$ is the projection onto the second factor. Explicitly, the objects of $i(\N)$ are triples $(S,\eta,f)$ where $S\in \S$, $\eta\in \cC_{\N}$ and $f$ is an isomorphism in $\S_{\rm ev}$ between $S/\Gamma$ and $p_{\N}(\eta)$; the morphisms are the natural ones.  In particular, the fiber of $i(\N)$ over $S\in \S$ is equal to $i(\N)(S)=\N(S/\Gamma)$. From the fact that the bosonic quotient $/\Gamma:\S\to \S_{\rm ev}$ is left adjoint to the natural inclusion $i:\S_{\rm ev}\to \S$, it follows easily that the morphism $i$ of  \eqref{E:func-i} extends the natural inclusion  $i:\S_{\rm ev}\to \S$, or in other words that $i(\un Y)=\un{i(Y)}$ for any complex space $Y\in \S_{\rm ev}$.

The second $2$-functor (called the \textbf{bosonic truncation}) is given by 
\begin{equation}\label{E:funct-bos}
\begin{aligned}
(-)_{\bos}: \FIB_{\S} & \longrightarrow \FIB_{\S_{\rm ev}}, \\ 
\M  & \mapsto \M_{\bos}:=(\cC_{M}\times_{\S} \S_{\rm ev}, p_2),
\end{aligned}
\end{equation}
 where the tensor product of categories is done with respect to the fibration functor $p_{\M}:\cC_{\M}\to \S$ and the natural inclusion 
 $i:\S_{\rm ev}\to \S$, and $p_2$ is the projection onto the second factor. 
 Explicitly, the objects of $\M_{\bos}$ are triples $(S,\eta,f)$ where $S\in \S_{\rm ev}$, $\eta\in \cC_{\M}$ and $f$ is an isomorphism in $\S$ between $i(S) $ and $p_{\M}(\eta)$; the morphisms are the natural ones.  In particular, the fiber of $\M_{\bos}$ over $S\in \S_{\rm ev}$ is equal to $\M_{\bos}(S)=\M(i(S))$. From the fact that the natural inclusion  $i:\S_{\rm ev}\to \S$ is left adjoint of the bosonic truncation 
 $(-)_{\bos}: \S\to \S_{\rm ev}$, it follows easily that the morphism $(-)_{\bos}$ of  \eqref{E:funct-bos} extends the bosonic truncation  $(-)_{\bos}:\S\to \S_{\rm ev}$, or in other words that $(\un X)_{\bos}=\un{X_{\bos}}$ for any complex superspace $X\in \S$. 

The $2$-functors $i$ and $(-)_{\bos}$ satisfy the following properties: 

\begin{enumerate}[(i)]
\item The natural inclusion $i$ exhibits $\FIB_{\S_{\rm ev}}$ as the full sub-$2$-category of $\FIB_{\S}$ consisting of all the elements $\M\in \FIB_{\S}$ such that $\M(S)=\M(i(S/\Gamma))$ for any $S\in \S$.
\item Given $\M\in \FIB_{\S}$, the bosonic truncation $\M_{\bos}$ can be identified with the full subcategory of $\M$ consisting of all objects $\eta\in \cC_{\M}$ such that $p_{\M}(\eta)\in \S_{\rm ev}$, endowed with the restriction of the fibration functor $p_{\M}$. 
\item The natural inclusion $i$ is left adjoint of the bosonic truncation $(-)_{\bos}$. In particular, for any $\M\in \FIB_{\S}$ there is a morphism $i(\M_{\bos})\to \M$ of CFG over $\S$ which is universal with respect to morphisms from CFG over $\S_{\rm ev}$ (regarded as CFG over $\S$ via the natural inclusion) into $\M$. 
\end{enumerate}

\begin{remark}\label{R:NObosquot}
We do not know how to extend the bosonic quotient functor $/ \Gamma:\S \to \S_{\rm ev}$ to CFG's. The reason is that $/\Gamma$ does not preserve fibre products (see Remark \ref{R:bad-Gamma}), so it does not admit a left adjoint functor. However, in Theorem \ref{T:DMtrunc} we will  extend the bosonic quotient functor from DM complex superstacks to DM complex stacks, using atlases.
\end{remark}

\subsection{Superstacks}\label{S:stacks}

In order to define superstacks (which is the aim of this subsection), we will need to recall the descent theory for CFG's over $\S$ (or over $\S_{\rm ev}$) with respect to the \emph{\'etale topology}, i.e. the Grothendieck topology on $\S$ whose coverings are \'etale surjective morphisms of complex superspaces.

Let $\M$ be a CFG over $\S$ and let $\xi\in \M(T)$.  Let $p\colon T\to S$ be an \'{e}tale morphism of complex superspaces, and denote by $p_i$ (with $i=1,2$) the projections from $T\times_S T$ onto the $i$-th factor, by $p_{ij}$ (with $1\leq i<j \leq 3$) the projections from $T\times_S T\times_S T$ onto the $(i,j)$-factor, and by $q_i$ (with $i=1,2,3$) the projections 
from $T\times_S T\times_S T$ onto the $i$-th factor.
 A \emph{descent datum} for $\xi$ along the \'etale morphism $p:T\to S$ is an isomorphism
$$\phi \colon p_1^*\xi \to p_2^*\xi$$
satisfying the cocycle condition $p_{23}^*\xi\circ p_{12}^*\xi=p_{13}^*\xi:q_1^*\eta\to q_3^*\xi$. 
A descent datum $\xi$ along $p:T\to S$ is called \emph{effective} if there exists an object $\eta\in \M(S)$ and an isomorphism $\psi \colon p^*\eta\to \xi$ such that $\phi=(p_2^*\psi)\circ(p_1^*\psi)^{-1}$.

\begin{definition}[Superstack] \label{D:superstack}
A stack in groupoids over $\S$ for the \'etale topology (or simply a \emph{superstack}) is a category fibered in groupoids  $\M$ over $\S$ that satisfies the following two conditions 
\begin{enumerate}
\item \label{stack1} Every descent datum (for the \'etale topology) is effective.
\item \label{stack2} For any $S \in \S$, and any pair of objects $\eta$ and $\eta'$ in $\M(S)$, the contravariant functor (called the \emph{isomorphism functor} between $\eta$ and $\eta'$)
$$\Isom_S(\eta,\eta'): \Hom(-,S) \to {\rm Sets}$$
that associate to every  $f:T\to S$ the set of isomorphisms in $\M(T)$ between $f^*(\eta)$ and $f^*(\eta')$  is a sheaf in the \'{e}tale topology, i.e. for every surjective \'etale morphism $\pi:X\to Y$ of complex superspaces over $S$  the diagram
$$
\xymatrix{
\Isom_S(\eta,\eta')(Y) \ar[r]^{m} & \Isom_S(\eta,\eta')(X) \ar@<-.5ex>[r]_(0.4){m_2} \ar@<.5ex>[r]^(0.4){m_1} & \Isom_S(\eta,\eta')(X\times_Y X)
}
$$
is exact, where $m:=\Isom_S(\eta,\eta')(\pi)$ and $m_i:=\Isom_S(\eta,\eta')(\pi_i)$ with $\pi_i:X\times_Y X\to X$ is the projection onto the $i$-th factor.
\end{enumerate}
\end{definition}

The $2$-category of superstacks is the full sub $2$-category of $\FIB_{\S}$  whose objects are superstacks  (while $1$-morphisms and $2$-morphisms are the same as the ones of $\FIB_{\S}$). The $2$-category of superstacks  admits fibre products  since it is easily checked that the fibre product of two morphisms of superstacks  (seen as CFG's over $\S$)  is a superstack.


In a similar way, one can define stacks in groupoids over $\S_{\rm ev}$ with respect to the \'etale topology, which we call  (even or bosonic) \emph{stacks}. They form a full sub-$2$-category of $\FIB_{\S_{\rm ev}}$ which is closed under taking fibre products. 

\begin{lemma}\label{L:i-bos-stacks}
\noindent 
\begin{enumerate}[(i)]
\item \label{L:i-bos-stacks1}The natural inclusion $i: \FIB_{\S_{\rm ev}} \to \FIB_{\S}$ sends stacks in superstacks. 
\item \label{L:i-bos-stacks2} The bosonic truncation $(-)_{\bos}: \FIB_{\S} \to \FIB_{\S_{\rm ev}}$ sends superstacks into stacks.
\end{enumerate}
\end{lemma}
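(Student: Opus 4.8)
The plan is to verify the two stack axioms (effectiveness of descent data and the sheaf condition on $\Isom$ functors) for $i(\N)$ and for $\M_{\bos}$ separately, exploiting in each case the explicit description of the fibers given in Section~\ref{S:CFG} together with the fact (Proposition~\ref{P:etale}) that the bosonic quotient $/\Gamma$ and the bosonic truncation $(-)_{\bos}$ preserve \'etaleness and surjectivity of morphisms of complex superspaces.

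\medskip
\noindent\textbf{Part \eqref{L:i-bos-stacks1}.} First I would recall that $i(\N)(S)=\N(S/\Gamma)$ and, more generally, that pull-back along $f\colon T\to S$ in $i(\N)$ corresponds to pull-back along $f/\Gamma\colon T/\Gamma\to S/\Gamma$ in $\N$. Given an \'etale covering $p\colon T\to S$ in $\S$, the key point is to compare the descent diagram for $i(\N)$ along $p$ with the descent diagram for $\N$ along $p/\Gamma\colon T/\Gamma\to S/\Gamma$. By Proposition~\ref{P:etale}\eqref{P:etale2}, $p/\Gamma$ is \'etale; and by Lemma~\ref{lem:quoziente}, since $p$ is \'etale, the natural map $(T\times_S T)/\Gamma\to T/\Gamma\times_{S/\Gamma}T/\Gamma$ is an isomorphism (and similarly for the triple fibre product, using that the projections $p_i$, $p_{ij}$ are pullbacks of the \'etale map $p$, hence \'etale). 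Therefore a descent datum for $\xi\in i(\N)(T)=\N(T/\Gamma)$ along $p$ is literally the same thing as a descent datum for $\xi$ in $\N$ along the \'etale covering $p/\Gamma$, and its effectiveness in $\N$ (which holds since $\N$ is a stack) gives an object of $\N(S/\Gamma)=i(\N)(S)$ realizing it. The sheaf condition for $\Isom_S$ in $i(\N)$ is handled identically: for $\eta,\eta'\in i(\N)(S)=\N(S/\Gamma)$ and an \'etale cover $\pi\colon X\to Y$ over $S$, one has $\Isom_S(\eta,\eta')(X)=\Isom_{S/\Gamma}(\eta,\eta')(X/\Gamma)$, the map $\pi/\Gamma$ is \'etale and surjective, and $(X\times_Y X)/\Gamma\cong X/\Gamma\times_{Y/\Gamma}X/\Gamma$ by Lemma~\ref{lem:quoziente}, so the equalizer diagram for $i(\N)$ is the equalizer diagram for the sheaf $\Isom_{S/\Gamma}(\eta,\eta')$ on $\N$, which is exact.

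\medskip
\noindent\textbf{Part \eqref{L:i-bos-stacks2}.} This direction is easier. Using $\M_{\bos}(S)=\M(i(S))$ for $S\in\S_{\rm ev}$ and Remark~\ref{R:incl-et} (an \'etale morphism of complex spaces is \'etale as a morphism of complex superspaces), together with the fact that $i$ preserves fibre products, an \'etale covering $p\colon T\to S$ in $\S_{\rm ev}$ gives, via $i$, an \'etale covering $i(T)\to i(S)$ in $\S$ with $i(T)\times_{i(S)}i(T)=i(T\times_S T)$ and likewise for the triple product. Hence descent data in $\M_{\bos}$ along $p$ are exactly descent data in $\M$ along $i(p)$, which are effective since $\M$ is a superstack; and similarly the $\Isom$ sheaf condition in $\M_{\bos}$ over $S$ is the $\Isom$ sheaf condition in $\M$ over $i(S)$ along the \'etale surjection $i(\pi)$.

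\medskip
\noindent The main obstacle, and the only non-formal ingredient, is the compatibility of the bosonic quotient with fibre products along \'etale maps in Part \eqref{L:i-bos-stacks1}: without the isomorphism $(T\times_S T)/\Gamma\cong T/\Gamma\times_{S/\Gamma}T/\Gamma$ of Lemma~\ref{lem:quoziente} one cannot match the two descent (or equalizer) diagrams, and indeed, as Remark~\ref{R:bad-Gamma} shows, $/\Gamma$ does not preserve general fibre products, so the \'etale hypothesis is genuinely used here. Everything else is a direct unwinding of the definitions of $i$, $(-)_{\bos}$, and the \'etale topology.
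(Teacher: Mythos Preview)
Your proof is correct and follows essentially the same approach as the paper: both parts reduce the stack axioms for $i(\N)$ (resp.\ $\M_{\bos}$) to those for $\N$ (resp.\ $\M$) via the identifications $i(\N)(S)=\N(S/\Gamma)$ and $\M_{\bos}(S)=\M(i(S))$, using Proposition~\ref{P:etale}, Remark~\ref{R:incl-et}, and crucially Lemma~\ref{lem:quoziente} to match the fibre products appearing in the descent and equalizer diagrams. Your explicit remark that the \'etale hypothesis is genuinely needed in Part~\eqref{L:i-bos-stacks1} (via Lemma~\ref{lem:quoziente}, in light of Remark~\ref{R:bad-Gamma}) is exactly the point.
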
 
\begin{proof}
Part \eqref{L:i-bos-stacks1}: let $\N$ be a stack and let us prove that $i(\N)\in \FIB_{\S}$ is a superstack by checking that conditions \eqref{stack1} and \eqref{stack2} of Definition  \ref{D:superstack} are satisfied. 

Consider first a descent datum for $\xi\in i(\N)(T)$  with respect to the \'etale morphism $p:T\to S$ of complex superspaces. 
By applying Lemma \ref{lem:quoziente}, this induces a descent datum for $\xi\in \N(T/\Gamma)=i(\N)(T)$ with respect to the  morphism $p/\Gamma:T/\Gamma\to S/\Gamma$ of complex spaces, which is \'etale by Proposition \ref{P:etale}\eqref{P:etale2}. Since $\N$ is a stack, this descent datum is effective, i.e. there exists $\eta\in \N(S/\Gamma)$ and an isomorphism $\psi:  (p/\Gamma)^*\eta\to \xi$ in $\N(T/\Gamma)$ inducing the descent datum for $\xi$ with respect to $p/\Gamma$. 
But then, if we interpret $\eta$ as an element of $i(\N)(S)=\N(S/\Gamma)$ and $\psi$ as an isomorphism between $p^*\eta$ and $\eta$ in $i(\N)(T)$, $\eta$ and $\psi$ induce the descent datum for $\eta$ with respect to $p:T\to S$, and thus condition \eqref{stack1} is satisfied.  
 
Consider next the isomorphism functor $\Isom_S(\eta, \eta')$ between two objects $\eta$ and $\eta'$ in $i(\N)(S)$. By the definition of $i(\N)$, the functor $\Isom_S(\eta, \eta')$  is given by the composition of  $-/\Gamma:\Hom_{\S}(-,S)\to \Hom_{\S_{\ev}}(-/\Gamma,S/\Gamma)$ with the isomorphism functor $\Isom_{S/\Gamma}(\eta,\eta')$ between the same objects $\eta$ and $\eta'$ considered as objects of $\N(S/\Gamma)=i(\N)(S)$. Using that $-/\Gamma$ preserves \'etale morphisms by Proposition \ref{P:etale}\eqref{P:etale2} and fibre products by Lemma \ref{lem:quoziente}, and that $\Isom_{S/\Gamma}(\eta,\eta')$ is a sheaf for the 
\'etale topology on $\S_{\ev}$ since $\N$ is a stack, we deduce that $\Isom_S(\eta, \eta')$ is a sheaf for the \'etale topology on $\S$, and thus condition \eqref{stack2} is satisfied.

The proof of part \eqref{L:i-bos-stacks2} is similar using that for any $\M\in \FIB_{\S}$ the fibers of its bosonic truncation $(\M)_{\bos}$ are equal to $(\M)_{\bos}(S)=\M(i(S))$ and that the natural inclusion functor $i:\S_{\rm ev}\to \S$ preserves fiber products and it sends \'etale morphism of complex spaces into \'etale morphisms of complex superspaces (see Remark \ref{R:incl-et}).

\end{proof}

\vspace{0.1cm}

An important and commonly used criterion to check that condition \eqref{stack2} is satisfied for a CFG over $\S$,  is to show that $\Isom(\eta, \eta')$ is representable, i.e. it is the \emph{functor of points} $\Hom(-, X)$ of a complex superspace $X$ over $S$, and then using the following:

\begin{lemma}\label{L:repres}
Let $X$ be a complex superspace. 
\begin{enumerate}[(i)]
\item \label{L:repres1} The functor of points of $X$
$$\begin{aligned}
h_X:= \Hom(-,X): \S & \longrightarrow {\rm Sets} \\
T & \mapsto \Hom(T,X)
\end{aligned}
$$
is a sheaf for the \'etale topology.
\item \label{L:repres2} The CFG $\un X$ associated to $X$ is a superstack.
\end{enumerate}
\end{lemma}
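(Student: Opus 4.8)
The plan is to prove both parts of Lemma~\ref{L:repres} by reducing everything to the elementary fact that morphisms of superspaces can be glued along open covers, combined with faithfully flat descent along \'etale covers.

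\textbf{Part \eqref{L:repres1}.} To show that $h_X$ is a sheaf for the \'etale topology, I would take an \'etale surjective morphism $\pi:U\to T$ of complex superspaces and verify that the sequence $\Hom(T,X)\to\Hom(U,X)\rightrightarrows\Hom(U\times_T U,X)$ is exact. Injectivity of $\Hom(T,X)\to\Hom(U,X)$ is immediate once one knows that $\pi$ is an epimorphism in the category of complex superspaces: two morphisms $f,g:T\to X$ agreeing after composition with $\pi$ must coincide. Since $\pi$ is \'etale and surjective, $\pi$ is surjective on underlying topological spaces and induces isomorphisms $\O_{T,\pi(u)}^{\sharp}\xrightarrow{\sim}\O_{U,u}$ on stalks, so $\pi^{\sharp}:\O_T\to\pi_*\O_U$ is injective; this gives the epimorphism property. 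For exactness in the middle, given $\varphi:U\to X$ with $\varphi\circ p_1=\varphi\circ p_2$ on $U\times_T U$, I would first reduce to the case where $X$ is a closed subspace of a complex superdomain, hence ultimately to $X$ an open subset of $\C^{p|q}$; then a morphism to $\C^{p|q}$ is just a tuple of $p$ even and $q$ odd sections of the structure sheaf, and the statement becomes that $\O_T(V)\to\O_U(\pi^{-1}V)\rightrightarrows\O_{U\times_T U}(\cdots)$ is exact for every open $V\subseteq T$, i.e.\ that $\O_{X,0}$ and $\O_{X,1}$ satisfy \'etale descent as sheaves. Here one invokes that \'etale morphisms of complex superspaces are in particular faithfully flat (the stalk maps are isomorphisms), so ordinary faithfully flat descent for quasi-coherent sheaves applies to both the even and odd parts; the fact that $\pi$ is a local homeomorphism makes this even more elementary, essentially reducing to gluing of sections over the open cover refining $\pi$. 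I would then check that the resulting descended morphism $T\to\C^{p|q}$ actually factors through the open subset $X$ (this is a topological condition, handled by surjectivity of $\pi$) and through the closed subspace cut out by the ideal $\I$ (the ideal condition descends because $\pi^{\sharp}$ is injective).

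\textbf{Part \eqref{L:repres2}.} Granting \eqref{L:repres1}, I would deduce that $\un X$ is a superstack by verifying the two axioms of Definition~\ref{D:superstack}. Condition \eqref{stack2} is immediate: for $\eta=(f:S\to X)$ and $\eta'=(f':S\to X)$ in $\un X(S)$, the isomorphism functor $\Isom_S(\eta,\eta')$ is empty unless $f=f'$, in which case it is the one-point sheaf; either way it is a subsheaf of the terminal sheaf and hence a sheaf, or alternatively it is represented by $S$ or by the empty superspace over $S$, so one can cite \eqref{L:repres1} directly. Condition \eqref{stack1}, effectivity of descent data, is where \eqref{L:repres1} does the real work: a descent datum for an object $\xi=(g:T\to X)\in\un X(T)$ along an \'etale cover $p:T\to S$ is automatically trivial (since the fibers of $\un X$ are discrete, the cocycle isomorphism $\phi:p_1^*\xi\to p_2^*\xi$ forces $g\circ p_1=g\circ p_2$), so $g$ defines an element of the equalizer $\Hom(T,X)\rightrightarrows\Hom(T\times_S T,X)$, and by \eqref{L:repres1} this equalizer equals $\Hom(S,X)$; the resulting morphism $S\to X$ is the required object of $\un X(S)$, and the canonical isomorphism $\psi:p^*\eta\to\xi$ is the identity.

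\textbf{Main obstacle.} The technical heart of the argument is establishing \'etale descent for the structure sheaf of a complex superspace, i.e.\ exactness of $\O_X(V)\to\O_U(\cdots)\rightrightarrows\O_{U\times_V U}(\cdots)$ in Part~\eqref{L:repres1}. Because the relevant \'etale maps here are isomorphisms on stalks rather than merely flat with geometrically reduced fibers, this is genuinely just the statement that a sheaf satisfies the gluing axiom along the open cover implicit in an \'etale morphism, together with the description of a complex superspace as a complex space $(X,\O_{X,0})$ equipped with a coherent $\O_{X,0}$-module $\O_{X,1}$; so the real content is packaging the classical descent/gluing statement for complex spaces and coherent sheaves (as in the references cited in Section~\ref{sec:prel}) into the super setting, keeping track of the $\Z/2\Z$-grading. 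The subtlety to watch is ensuring the descended map lands inside the prescribed open-and-closed local model $X$, which is handled separately at the level of topological spaces (using surjectivity of $\pi$) and at the level of the defining ideal sheaf (using injectivity of $\pi^{\sharp}$).
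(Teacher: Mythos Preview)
Your proposal is correct and follows essentially the same route as the paper. For part~\eqref{L:repres1} the paper, like you, reduces to exactness of the sequence $\Gamma(T,\O_T)\to\Gamma(S,\O_S)\rightrightarrows\Gamma(S\times_T S,\O_{S\times_T S})$ for a surjective \'etale (hence flat) morphism of quasi-affine complex superspaces; your observation that in this analytic setting \'etale morphisms are stalkwise isomorphisms, so that the descent step reduces to ordinary sheaf gluing rather than general faithfully flat descent, is a pleasant simplification. For part~\eqref{L:repres2} the paper simply invokes the standard fact (e.g.\ \cite[Prop.~4.9]{Vis}) that the CFG attached to a sheaf of sets is automatically a stack, whereas you unpack this by checking the two axioms of Definition~\ref{D:superstack} directly; the content is the same.

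One small imprecision worth fixing: your description of $\Isom_S(\eta,\eta')$ as ``empty unless $f=f'$, in which case it is the one-point sheaf'' and as ``represented by $S$ or by the empty superspace'' is not accurate in general---it is the subfunctor of $h_S$ sending $g:T\to S$ to $\{*\}$ or $\emptyset$ according to whether $f\circ g=f'\circ g$, i.e.\ it is represented by the equalizer of $f,f'$ inside $S$. More importantly, ``subsheaf of the terminal sheaf and hence a sheaf'' is not a valid implication (a subpresheaf of a sheaf need not be a sheaf). The correct argument is the one you gesture at in your final clause: the sheaf property of $\Isom_S(\eta,\eta')$ is precisely the separatedness part of $h_X$ being a sheaf, so it does follow from~\eqref{L:repres1}.
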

\begin{proof}
The proof of part \eqref{L:repres1} proceeds along the same lines of the proof for ordinary complex spaces (see e.g. \cite[Chap. XII, Thm. 7.4]{ACG} or \cite[Thm. 2.55]{Vis}) and it boils down to the following result of supercommutative algebra: if $f: S\to T$ is a surjective flat (e.g. \'etale) morphism of affine complex superspaces
then the sequence of supercommutative $\C$-superalgebras 
$$
\xymatrix{
0\to \Gamma(T, \O_T) \ar[r]^{f^{\sharp}} &  \Gamma(S, \O_S) \ar@<-.5ex>[r]_(0.3){e_2} \ar@<.5ex>[r]^(0.3){e_1} &  \Gamma(S, \O_S)\times_{\Gamma(T, \O_T)} \Gamma(S, \O_S)
}
$$
is exact, where $e_1(a)=a\otimes 1$ and $e_2(a)=1\otimes a$. 

Part \eqref{L:repres2} follows from the fact that $\un X$ is the CFG associated to the functor of points $h_X$  as in \cite[Prop. 3.26]{Vis} and the fact that $h_X$ is a sheaf for the \'etale topology (see \cite[Prop. 4.9]{Vis}). 
\end{proof}

In particular, the above Lemma together with the $2$-Yoneda lemma says that the functor $X\mapsto \un X$ is a fully faithful embedding of $\S$ into the $2$-category of superstacks. In what follows, we will say that a superstack is a complex superspace if it lies on the essential image of this embedding, i.e. if it is isomorphic to $\un X$ for some complex superspace. The above Lemma holds true also for the category $\S_{\rm ev}$, so that we get an embedding of the category of complex spaces into the $2$-category of stacks.

\subsection{Complex superstacks}\label{S:algstacks}

In order to introduce complex superstacks (which is the aim of this subsection), we need to be able to extend the notions of separated/proper/smooth/\'etale from morphisms between complex superspace to certain morphism of superstacks defined as it follows. 

A morphism $f:\M\to \N$ between superstacks is called \emph{representable} if for every complex superspace $S$ and every morphism $\un S\to \N$, the fibre product $\M\times_{\N} \un S$ is a complex superspace. Given any property $\textbf{P}$ of morphisms of complex superspace which is stable under base change (like separated, proper, smooth, \'etale, surjective), 
we say that a representable morphism $f:\M\to \N$ of superstacks satisfies property $\textbf{P}$ if for any morphism $\un S\to \N$ from a complex superspace, the morphism between complex superspaces $\M\times_{\N} \un S \to \un S$ satisfies property $\textbf{P}$.

We will be interested in superstacks $\M$ for which every morphism  from a complex superspace onto $\M$ is representable. This property can be  characterized as follows.   

\begin{lemma}\label{L:repr-diag}
Let $\M$ be a superstack. Then the following properties are equivalent:
\begin{enumerate}[(i)]
\item \label{L:repr-diag1} every morphism $\un S\to \M$ from a complex superspace is representable;
\item \label{L:repr-diag2} the diagonal $\Delta:\M\to \M\times \M$ is  representable; 
\item \label{L:repr-diag3} for any complex superspace $S$  and  any pair of objects $\eta$ and $\eta'$ in $\M(S)$, the isomorphism functor $\Isom_S(\eta,\eta')$ is the $S$-relative functor of points of a complex superspace over $S$, i.e. there exists a complex superspace $I(\eta, \eta')$ together with a map $\phi:I(\eta, \eta')\to S$ such that  $\Isom_S(\eta,\eta')(T\stackrel{f}{\longrightarrow}S)$ is the set of all morphisms $\wt f:T\to I(\eta,\eta')$ such that $f=\phi\circ \wt f$. 
\end{enumerate}
\end{lemma}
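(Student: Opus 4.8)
\textbf{Plan for the proof of Lemma \ref{L:repr-diag}.}

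The plan is to prove the cycle of implications $\eqref{L:repr-diag1}\Rightarrow\eqref{L:repr-diag2}\Rightarrow\eqref{L:repr-diag3}\Rightarrow\eqref{L:repr-diag1}$, mimicking the classical argument for ordinary algebraic/analytic stacks (see \cite[Chap. XII]{ACG} or \cite{Vis}); the only point where something genuinely new could go wrong is that fibre products of complex superspaces must be manipulated throughout, but these exist in $\S$ and the formation of $\un{(-)}$ commutes with them, so the classical bookkeeping carries over.

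\emph{$\eqref{L:repr-diag1}\Rightarrow\eqref{L:repr-diag2}$.} Given a morphism $\un T\to \M\times\M$, i.e.\ a pair $(\eta,\eta')\in\M(T)\times\M(T)$ by the $2$-Yoneda lemma \eqref{E:Yoneda}, one computes the fibre product $\M\times_{\M\times\M}\un T$ and identifies it (as a CFG over $\S$) with the fibre product $\un T\times_{\M\times\M,\,(\eta,\eta')}\un T$ taken along the two maps $\un T\to \M\hookrightarrow\M\times\M$ induced by $\eta$ and $\eta'$. Since each map $\un T\to\M$ is representable by hypothesis \eqref{L:repr-diag1}, and since representability of morphisms of superstacks is stable under base change and composition (a formal consequence of the existence of fibre products in $\FIB_\S$ and the $2$-Yoneda lemma), one deduces that $\M\times_{\M\times\M}\un T$ is a complex superspace. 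Hence $\Delta$ is representable.

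\emph{$\eqref{L:repr-diag2}\Rightarrow\eqref{L:repr-diag3}$.} Fix a complex superspace $S$ and objects $\eta,\eta'\in\M(S)$; these give a map $\un S\to\M\times\M$ via $\eta_{\eta}$ and $\eta_{\eta'}$, and I would unwind the construction of the fibre product $\cC_{\M}\times_{\cC_{\M\times\M}}\cC_{\un S}$ from \cite[Tag 0041]{Stacks} recalled in \S\ref{S:CFG}: an object of $\M\times_{\M\times\M}\un S$ over $T\in\S$ is a morphism $f\colon T\to S$ together with an isomorphism in $\M(T)$ between $f^*\eta$ and $f^*\eta'$, which is exactly an element of $\Isom_S(\eta,\eta')(T\xrightarrow{f}S)$. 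Thus $\M\times_{\M\times\M}\un S$, which is a complex superspace $I(\eta,\eta')$ by \eqref{L:repr-diag2}, represents $\Isom_S(\eta,\eta')$ as an $S$-relative functor of points, with the structure map $\phi\colon I(\eta,\eta')\to S$ coming from the projection onto the $\un S$-factor. (I should also record that $\Isom_S(\eta,\eta')$ is then automatically a sheaf for the \'etale topology by Lemma \ref{L:repres}\eqref{L:repres1}, which is the use we will make of this lemma later.)

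\emph{$\eqref{L:repr-diag3}\Rightarrow\eqref{L:repr-diag1}$.} Let $g\colon\un S\to\M$ correspond to $\eta\in\M(S)$ and let $h\colon\un T\to\M$ be any morphism from a complex superspace, corresponding to $\eta'\in\M(T)$. The fibre product $\un S\times_{\M}\un T$ has objects over $U\in\S$ consisting of maps $a\colon U\to S$, $b\colon U\to T$ and an isomorphism $a^*\eta\cong b^*\eta'$ in $\M(U)$. Pulling $\eta$ and $\eta'$ back to $\un S\times\un T=\un{S\times T}$ and applying \eqref{L:repr-diag3} over the base $S\times T$, the isomorphism functor $\Isom_{S\times T}(p_S^*\eta,p_T^*\eta')$ is represented by a complex superspace $I$ over $S\times T$; one checks directly from the descriptions that $\un S\times_{\M}\un T\cong\un I$. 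Since by Lemma \ref{L:repres}\eqref{L:repres2} every complex superspace defines an object of the essential image of $\S$ in the $2$-category of superstacks, this says precisely that $g$ is representable, giving \eqref{L:repr-diag1}. The main (and only) obstacle is the careful identification of the three fibre-product CFGs with the relevant $\Isom$-functors; all of these are diagram chases using the explicit description of fibre products in $\FIB_\S$ from \S\ref{S:CFG} and the $2$-Yoneda lemma, with no super-specific difficulty beyond keeping track of parities, which enter only through the already-established fact that $\S$ has fibre products.
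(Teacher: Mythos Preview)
Your proposal is correct in spirit and follows the same classical route as the paper: the paper proves (i)$\Leftrightarrow$(ii) via the single isomorphism $S\times_{\M}T\cong \M\times_{\M\times\M}(S\times T)$ (citing \cite[Sec.~XII, Lemma 8.1]{ACG}) and (ii)$\Leftrightarrow$(iii) via the identification $(\M\times_{\M\times\M}S)(T)=\Isom_S(\xi,\eta)(T)$ (citing \cite[Chap.~XII, Eq.~(8.4)]{ACG}), which is exactly the content of your steps (ii)$\Rightarrow$(iii) and (iii)$\Rightarrow$(i).

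One small wobble: in your step (i)$\Rightarrow$(ii), the identification of $\M\times_{\M\times\M}\un T$ with ``$\un T\times_{\M\times\M,(\eta,\eta')}\un T$ along the two maps $\un T\to\M\hookrightarrow\M\times\M$'' is not right as written --- composing $\eta$ and $\eta'$ with the diagonal $\M\hookrightarrow\M\times\M$ gives the maps $(\eta,\eta)$ and $(\eta',\eta')$, and the resulting fibre product is not $\Isom_T(\eta,\eta')$. What you want is the paper's formula: factor $(\eta,\eta')\colon\un T\to\M\times\M$ as $\un T\xrightarrow{\Delta_T}\un T\times\un T\xrightarrow{\eta\times\eta'}\M\times\M$, use $\M\times_{\M\times\M}(\un T\times\un T)\cong \un T\times_{\M}\un T$ (representable by (i)), and then pull back along $\Delta_T$. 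With that correction your argument is complete and coincides with the paper's.
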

\begin{proof}
The proof of the equivalence of \eqref{L:repr-diag1} and \eqref{L:repr-diag2} is the same as the one of \cite[Sec. XII, Lemma 8.1]{ACG} using that for every morphisms 
$f:S\to \M$ and $g:T\to \M$ we have that 
$$S\times_{\M} T\cong \M\times_{\M\times \M} (S\times T).$$ 

The equivalence of \eqref{L:repr-diag2} and \eqref{L:repr-diag3} follows from the fact that if we have a morphism $S\to \M\times\M$ whose two projections are given by the elements  $\xi\in \M(S)$ and $\eta\in \M(S)$, then for every $T\to S$ we have (as in \cite[Chap. XII, Equ. (8.4)]{ACG}):
$$ (\M \times_{\M\times \M} S)(T)=\Isom_S(\xi, \eta)(T).$$
\end{proof}

Given any property $\textbf{P}$ of morphisms of complex superspace which is stable under base change (like separated, proper, smooth, \'etale, surjective), 
we say that a representable morphism $f:\M\to \N$ of superstacks satisfies property $\textbf{P}$ if for any morphism $\un S\to \N$ from a complex superspace, the morphism between complex superspaces $\M\times_{\N} \un S \to \un S$ satisfies property $\textbf{P}$.

We are now ready to define (DM) complex   superstacks.

\begin{definition}[(DM) complex superstack]\label{def:DMstack} 
A superstack $\M$ is called a \emph{complex  superstack} (resp. a \emph{DM(=Deligne-Mumford) complex superstack})  if:
\begin{enumerate}
\item \label{def:DMstack1} the diagonal $\Delta\colon \M \to \M\times \M$ is representable  and separated;
\item \label{def:DMstack2} there exists a complex superspace  $X$, and a smooth (resp. \'etale) surjective morphism $\un X\to \M$, which is called an (resp. \'etale) \emph{atlas}.
\end{enumerate}
Given a complex superstack $\M$, we say that:
\begin{enumerate}[(i)]
\item \label{prop1} $\M$ is \emph{smooth} if there exists a smooth atlas, or equivalently if any atlas is smooth. 
\item \label{prop2} $\M$ is \emph{separated} if the diagonal $\Delta:\M\to \M\times \M$ is proper. 
\end{enumerate}

Given a smooth DM complex superstack $\M$, we say that $\M$ has \emph{dimension} $p|q$ if there exists an  \'etale atlas of $\M$ which is smooth of dimension $p|q$, or equivalently if any \'etale atlas $\M$  is smooth of dimension $p|q$.
\end{definition}

Note that the definition of dimension for a smooth DM complex superstacks is well posed since if $f:X\to Y$ is an \'etale morphism of complex supermanifolds 
then the dimension of $X$ is equal to the dimension of $Y$ (this follows directly from the definition of \'etale morphisms).

Combining Lemma \ref{L:repres} and Lemma \ref{L:repr-diag}, we get the following remark which will be useful in what follows. 

\begin{remark}\label{scorciatoia}
Let $\M$ be a CFG over $\S$ and suppose  that for any $S\in \S$ and any two objects $\eta, \eta'\in \M(S)$, the isomorphism functor $\Isom_S(\eta,\eta')$ is the $S$-relative functor of points of a complex superspace over $S$ (resp. an $S$-separated complex superspace). Then condition \eqref{stack2} of Definition \ref{D:superstack} (resp. condition \eqref{def:DMstack1} of Definition \ref{def:DMstack})  is satisfied. 
\end{remark}

The $2$-category of complex superstacks, which we will denote by $\sSt$, is the full sub $2$-category of the $2$-category of superstacks (hence also of the $2$-category $\FIB_{\S}$)  whose objects are complex superstacks  (while $1$-morphisms and $2$-morphisms are the same as the ones of $\FIB_{\S}$). The $2$-category of complex superstacks  admits fibre products  since it is easily checked that the fibre product of two morphisms of complex superstacks  (seen as CFG's over $\S$)  is a complex superstack.
The $2$-category of DM complex   superstacks, which we will denote by $\esSt$, is the full sub $2$-category of $\sSt$  whose objects are DM complex   superstacks. Similarly, one defines the $2$-category $\St$ of complex stacks, and its full sub $2$-category $\eSt$ of DM complex   stacks. 

The superstack $\un X$ associated to a complex superspace is DM complex   superstack since the diagonal $\Delta:\un X\to \un X\times \un X=\un{X\times X}$ is representable (being a morphism of complex superspaces) and separated (because this is true for complex spaces) and an atlas for $\un X$ is just the identity map $\un X\to \un X$. Therefore, using the $2$-Yoneda lemma, the association $X\mapsto \un X$ is a fully faithful embedding of the category $\S$ of complex superspaces into the $2$-category $\esSt$ of DM complex   superstacks. And similarly, there is a fully faithful embedding of the category $\S_{\ev}$ of complex spaces into the $2$-category $\eSt$ of DM complex stacks. \emph{From now on, we will identify a complex (super)space $X$ with its associated DM complex   (super)stack $\un X$.}

The natural inclusion and the the bosonic truncation extend easily from complex (super)spaces to complex (super)stacks.

\begin{proposition}\label{P:i-bos-DM}
\noindent 
\begin{enumerate}[(i)]
\item \label{P:i-bos-DM1} The natural inclusion $i: \FIB_{\S_{\rm ev}} \to \FIB_{\S}$ sends complex (resp. DM, resp. separated, resp. smooth) stacks into complex  (resp. DM, resp. separated, resp. smooth) superstacks. 

\item \label{P:i-bos-DM2} The bosonic truncation $(-)_{\bos}: \FIB_{\S} \to \FIB_{\S_{\rm ev}}$ sends complex  (resp. DM, resp.  separated, resp.  smooth) superstacks into complex  (resp. DM, resp. separated, resp. smooth)  stacks.

\end{enumerate}
\end{proposition}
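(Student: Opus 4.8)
The plan is to verify conditions \eqref{def:DMstack1} and \eqref{def:DMstack2} of Definition \ref{def:DMstack} (and the additional properties of separatedness and smoothness) for $i(\N)$ in part \eqref{P:i-bos-DM1} and for $\M_{\bos}$ in part \eqref{P:i-bos-DM2}, reducing everything to the already-established facts about the underlying functors $/\Gamma$ and $i$ on complex (super)spaces. We already know from Lemma \ref{L:i-bos-stacks} that $i$ sends stacks to superstacks and $(-)_{\bos}$ sends superstacks to stacks, so only the representability-of-diagonal and atlas conditions remain.

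For part \eqref{P:i-bos-DM2}, the bosonic truncation is the easier of the two, since $i:\S_{\rm ev}\to \S$ preserves fibre products, \'etale morphisms (Remark \ref{R:incl-et}), smooth morphisms, separatedness, properness, and surjectivity. First I would observe that if $\un X\to \M$ is a smooth (resp.\ \'etale) atlas, then forming the fibre product with the bosonic truncation and using $(\M_{\bos})_{\bos}(S)=\M(i(S))$ together with compatibility of $(-)_{\bos}$ with fibre products of CFG's, one gets that $\un{X_{\bos}}\to \M_{\bos}$ is representable (base-change to a complex space $S$ pulls back to the bosonic truncation of the corresponding base change over $\M$, which is a complex space since $(X')_{\bos}$ is one) and smooth (resp.\ \'etale) by Proposition \ref{P:smooth} (resp.\ Proposition \ref{P:etale}\eqref{P:etale1}); hence \eqref{def:DMstack2} holds. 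For the diagonal, use $\M_{\bos}\times\M_{\bos}=(\M\times\M)_{\bos}$ and that the diagonal of $\M_{\bos}$ is the bosonic truncation of the diagonal of $\M$, which is representable and separated (resp.\ proper), and these properties descend to the bosonic truncation since $i$ preserves them; this gives \eqref{def:DMstack1} and separatedness. Smoothness of dimension is handled via the \'etale atlas, noting $(\C^{p|q})_{\bos}=\C^p$.

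For part \eqref{P:i-bos-DM1}, the natural inclusion $i$, I would run the parallel argument but using $i(\N)(S)=\N(S/\Gamma)$ and the adjunction $/\Gamma\dashv i$. If $\un Y\to \N$ is a smooth (resp.\ \'etale) atlas of the complex stack $\N$, then $i(\un Y)=\un{i(Y)}\to i(\N)$ should again be an atlas: given a complex superspace $S$ with a map $\un S\to i(\N)$, i.e.\ an object of $\N(S/\Gamma)$, the fibre product $\un{i(Y)}\times_{i(\N)}\un S$ has fibre over $T$ equal to $\N(T/\Gamma)$-level data, and unwinding this via the adjunction shows it is represented by $i(Y'\times_{S/\Gamma}?)$-type superspace — here the key technical input is precisely Lemma \ref{lem:quoziente}, which guarantees that $X/\Gamma\times_{S/\Gamma}Y/\Gamma\cong(X\times_S Y)/\Gamma$ when one of the maps is \'etale, and Proposition \ref{P:etale}\eqref{P:etale2}, which says $/\Gamma$ preserves \'etaleness, while Proposition \ref{P:smooth} is replaced by the observation that $i$ of a smooth map of spaces is smooth (Remark \ref{R:incl-et}). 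The diagonal condition follows from $i(\N)\times i(\N)=i(\N\times\N)$ and the fact that $i$ preserves representability and separatedness/properness of the diagonal.

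The main obstacle I anticipate is in part \eqref{P:i-bos-DM1}: showing that the pulled-back atlas $\un{i(Y)}\to i(\N)$ is genuinely \emph{representable} and \emph{smooth/\'etale}, because the fibre $i(\N)(S)=\N(S/\Gamma)$ mixes $S$ with its bosonic quotient, so a base change along a complex superspace $S$ does \emph{not} directly produce a base change of $\N$ along a complex space in a way compatible with fibre products — one must carefully interpose $S/\Gamma$ and then use Lemma \ref{lem:quoziente} to identify $\un{i(Y')}\times_{\un S}$(data) with the correct fibre product \emph{over $S$} rather than over $S/\Gamma$. Getting this compatibility right — essentially checking that $i$ commutes with the relevant $2$-fibre products of CFG's, which is where the left-adjointness of $/\Gamma$ and the \'etale hypothesis enter — is the heart of the argument; the smoothness/separatedness bookkeeping afterwards is routine given Remark \ref{R:incl-et}.
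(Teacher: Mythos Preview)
Your overall strategy is correct and matches the paper's: check the diagonal condition and then exhibit an atlas, using in each case that $i(\N)(S)=\N(S/\Gamma)$ and $\M_{\bos}(T)=\M(i(T))$. The paper differs from your sketch mainly in \emph{how} it verifies the diagonal condition: rather than arguing abstractly that ``$i$ preserves representability of the diagonal'', it invokes the Isom-functor criterion (Remark~\ref{scorciatoia} via Lemma~\ref{L:repr-diag}) and writes down the representing superspace explicitly. For part~\eqref{P:i-bos-DM1} this is $I(\eta,\eta'):=S\times_{i(S/\Gamma)} i(\ov I(\eta,\eta'))$, where $\ov I(\eta,\eta')\to S/\Gamma$ represents $\Isom_{S/\Gamma}(\eta,\eta')$; for part~\eqref{P:i-bos-DM2} it is $I(\eta,\eta'):=i(\ov I(\eta,\eta'))_{\bos}$. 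This concreteness makes separatedness and properness transparent (same underlying topological spaces), and avoids the hand-waving in your last paragraph.

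One correction: your identification of Lemma~\ref{lem:quoziente} as the ``key technical input'' for part~\eqref{P:i-bos-DM1} is misplaced. Lemma~\ref{lem:quoziente} concerns how $/\Gamma$ interacts with fibre products, and was needed earlier for Lemma~\ref{L:i-bos-stacks}\eqref{L:i-bos-stacks1}; it is not used here. What you actually need to compute $\un{i(Y)}\times_{i(\N)}\un S$ is: (a) the factorization $\un S\to \un{i(S/\Gamma)}\to i(\N)$ coming from the adjunction $/\Gamma\dashv i$, and (b) the fact that $i:\FIB_{\S_{\rm ev}}\to\FIB_{\S}$ preserves $2$-fibre products (immediate from its definition by precomposition with $/\Gamma$). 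Together these give $\un{i(Y)}\times_{i(\N)}\un S \cong \un{\,i(Y\times_{\N}(S/\Gamma))\times_{i(S/\Gamma)} S\,}$, after which smoothness/\'etaleness follows by Remark~\ref{R:incl-et} and stability under base change --- no appeal to Lemma~\ref{lem:quoziente} or Proposition~\ref{P:etale}\eqref{P:etale2} is required. With this adjustment your argument goes through and is equivalent to the paper's.
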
 

\begin{proof}
Part \eqref{P:i-bos-DM1}: let $\N$ be a complex stack and consider $i(\N)\in \FIB_{\S}$. Note that $i(\N)$ is a superstack by Lemma \ref{L:i-bos-stacks}\eqref{L:i-bos-stacks1}. Let us show that $i(\N)$ is a complex superstack by checking that conditions \eqref{def:DMstack1} and \eqref{def:DMstack2} of Definition \ref{def:DMstack} are satisfied.  In doing this, we will also prove that $i(\N)$ inherits the same properties (e.g. DM, smoothness, separatedness) of $\N$.

In order to prove that condition \eqref{def:DMstack1} is satisfied, it is enough to check, according to Remark \ref{scorciatoia}, that for any   $\eta, \eta'\in i(\N)(S)$, the 
isomorphism functor $\Isom_S(\eta, \eta')$ is the $S$-relative functor of an $S$-separated complex superspace $I(\eta, \eta')$. 
By the definition of $i(\N)$, the functor $\Isom_S(\eta, \eta')$  is given by the composition of  $-/\Gamma:\Hom_{\S}(-,S)\to \Hom_{\S_{\ev}}(-/\Gamma,S/\Gamma)$ with the isomorphism functor $\Isom_{S/\Gamma}(\eta,\eta')$ between the same objects $\eta$ and $\eta'$ considered as objects of $\N(S/\Gamma)=i(\N)(S)$.
Since $\N$ is a complex stack, we have that $\Isom_{S/\Gamma}(\eta,\eta')$ is the $S/\Gamma$-relative functor of points of an $S/\Gamma$-separated complex space $\ov I(\eta, \eta')$. Then  the fact that $\Isom_S(\eta, \eta')=\Isom_{S/\Gamma}(\eta,\eta')\circ -/\Gamma$ implies that  $\Isom_S(\eta, \eta')$ is the $S$-relative functor of points of the $S$-separated complex space $I(\eta, \eta'):=S\times_{i(S/\Gamma)} i(\ov I(\eta, \eta'))$, as required. 
Moreover, notice that if $\N$ is separated then $\ov I(\eta, \eta')\to S/\Gamma$ is proper, which implies that  $I(\eta, \eta')\to S$ is proper (because they have the same underlying   topological spaces) and hence that $i(\N)$ is separated. 

In order to check condition \eqref{def:DMstack2}, let $Y_0\to \N$ be a smooth (resp. \'etale) atlas of $\N$. Since $i$ preserve smooth and \'etale maps (see Remark \ref{R:incl-et}), the map $i(Y_0)\to i(\N)$ is a smooth (resp. \'etale) atlas, as required. Observe also that if $\N$ is smooth, then any such atlas $Y_0$ is smooth, which implies that $i(Y_0)$ is smooth, and hence that $i(\N)$ is smooth as well.

The proof of part \eqref{P:i-bos-DM2} is similar to the proof of part  \eqref{P:i-bos-DM1} using that the bosonic truncation preserves fiber products, \'etaleness of morphisms  (see Proposition \ref{P:etale}\eqref{P:etale1}), smoothness of morphism (see Proposition \ref{P:smooth}), and it does not change the underlying topological spaces. We will only prove that if $\M$ is a DM complex  superstack then $\M_{\bos}$ satisfies condition \eqref{def:DMstack1} of Definition \ref{def:DMstack}, and we will leave the remaining verifications to the reader. 

In order to prove that condition \eqref{def:DMstack1} is satisfied for $\M_{\bos}$, it is enough to check, according to Remark \ref{scorciatoia}, that for any   $\eta, \eta'\in \M_{\bos}(T)$, the isomorphism functor $\Isom_T(\eta, \eta')$ is the $T$-relative functor of a $T$-separated complex superspace $I(\eta, \eta')$. 
By the definition of $\M_{\bos}$, the functor $\Isom_T(\eta, \eta')$  is given by the composition of  $i:\Hom_{\S_{\ev}}(-,T)\to \Hom_{\S}(i(-),i(T))$ with the isomorphism functor $\Isom_{i(T)}(\eta,\eta')$ between the same objects $\eta$ and $\eta'$ considered as objects of $\M(i(T))=\M_{\bos}(T)$.
Since $\M$ is a complex superstack, we have that $\Isom_{i(T)}(\eta,\eta')$ is the $i(T)$-relative functor of points of a $i(T)$-separated complex superspace $\ov I(\eta, \eta')$. Then  the fact that $\Isom_T(\eta, \eta')=\Isom_{i(T)}(\eta,\eta')\circ i(-)$ implies that  $\Isom_T(\eta, \eta')$ is the $T$-relative functor of points of the $T$-separated complex space $I(\eta, \eta'):= i(\ov I(\eta, \eta'))_{\bos}$, as required. 

\end{proof}

Let us finish this section with the definition of coarse complex superspace of a complex superstack.

\begin{definition}[Coarse superspace]\label{D:coarse}
Let $\M$ be a complex superstack. A complex superspace  $M$ together with a map $p\colon \M \to M$ is a \emph{coarse superspace} for $\M$ if
\begin{enumerate}
\item \label{D:coarse1} the map $p$ is bijective on $\C$-valued points;
\item \label{D:coarse2} given any other complex superspace  $Z$ and a map $f\colon\M\to Z$, the map $f$ factors trough $p$.
\end{enumerate}
When it exists, we will denote the coarse superspace for $\M$ (which is unique by property \eqref{D:coarse2}) by $|\M|$.
\end{definition}

If $\M$ is equal to a complex superspace  $X$, then clearly $X$ itself is its own coarse superspace. 
In a similar way, we can define the coarse space of a complex stack. A classical result of Keel and Mori \cite{KM} asserts that any separated DM complex stack admits a coarse space. It would be interesting to extend this result to separated DM complex  superstacks.

\subsection{Complex Supergroupoids}\label{SS:supergrp}

There is a close relation between complex superstacks and complex supergroupoids, which we are going to explain in this subsection. 
See also \cite{lerman} and \cite{lie} for some nice surveys on this relation.

\begin{definition}[(\'Etale) complex supergroupoids]\label{D:supergrp}
A separated smooth (resp. \'etale) complex supergroupoid, or simply a \emph{complex supergroupoid} (resp. an  \emph{\'etale complex supergroupoid}), is the datum of two complex superspaces $X_0$ and $X_1$ and two smooth (resp. \'etale) morphisms $s,t:X_1\to X_0$ (called source and target) such that $(s,t):X_1\to X_0\times X_0$ is separated, together with a
unit morphism  $u:X_0\to X_1$, an inverse morphism $\iota:X_1\to X_1$ and a composition morphism $m:X_1 {}_{s}{\times}_{t}X_1 \to X_1$ satisfying the following relations
\begin{equation}\label{E:ax-grpd}
\begin{aligned}
& (s,t)\circ u=\Delta_{X_0}, \hspace{0.5cm} (s,t)\circ \iota= \eta_{X_0}\circ (s,t), \hspace{0.5cm} (s,t)\circ m=(s\times t), \\
& m\circ (u\circ t, \id_{X_1}) =\id_{X_1}, \hspace{0.5cm}m\circ (\id_{X_1}, u\circ s)=\id_{X_1}, \\
& m\circ (\id_{X_1}, m)=m\circ (m, \id_{X_1}),
\end{aligned}
\end{equation}
where $\Delta_{X_0}:X_0\to X_0\times X_0$ is the diagonal of $X_0$ and $\eta_{X_0}:X_0\times X_0\to X_0\times X_0$ interchanges the two factors.
A complex supergroupoid will be denoted by 
$$\left(\xymatrix{
X_1\ar@<-.5ex>[r]_{t}\ar@<.5ex>[r]^{s} & X_0\\
}, u, \iota, m\right)
\text{ or  by } 
\xymatrix{
X_1\ar@<-.5ex>[r]_{t}\ar@<.5ex>[r]^{s} & X_0\\
} \text{ or by }X_1\rightrightarrows X_0 \textrm{  or simply by  } X_{\bullet} \,.$$
\end{definition}

Complex supergroupoids  form a $2$-category, which we will denote by $\sGr$:  
\begin{itemize}
\item a $1$-morphism (or simply a morphism) 
$$f=(f_1,f_0)\colon \left(\xymatrix{
X_1\ar@<-.5ex>[r]_{t}\ar@<.5ex>[r]^{s} & X_0\\
}, u, \iota, m\right) \to \left(\xymatrix{
X_1'\ar@<-.5ex>[r]_{t'}\ar@<.5ex>[r]^{s'} & X_0'\\
}, u', \iota', m'\right) $$ 
is a pair of maps $f_i\colon X_i\to X_i'$, with $i=0,1$, such that the natural compatibility conditions hold:
\begin{equation}\label{E:comp-1mor}
f_0\circ s=s'\circ f_1, \hspace{0.2cm} f_0\circ t=t'\circ f_1, \hspace{0.2cm} f_1\circ u=u'\circ f_0, \hspace{0.2cm} f_1\circ \iota= \iota'\circ f_1, \hspace{0.2cm} f_1\circ m=m'\circ (f_1\times f_1);
\end{equation}
\item a $2$-morphism $\alpha$ between two morphisms $f=(f_1,f_0),g=(g_1,g_0)\colon X_{\bullet}\to X'_{\bullet}$ as above is a map $\alpha\colon X_0\to X_1'$ such that 
\begin{equation}\label{E:comp-2mor}
\begin{sis} 
& s(\alpha(x))=f_0(x) \text{ and } t(\alpha(x))=g_0(x) \text{ for every } x\in X_0,\\
&  m(g_1(a),\alpha(s(a)))=m(\alpha(t(a)),f_1(a)) \text{ for every } a\in X_1.
\end{sis}
\end{equation}
\end{itemize}

The $2$-category of \'etale complex   supergroupoids, which we will denote by $\esGr$, is the full sub $2$-category of $\sGr$ whose objects are \'etale complex   supergroupoids. 
Similarly, one defines the $2$-category $\Gr$ of complex groupoids and its full sub $2$-category $\eGr$ of \'etale complex   groupoids.

\vspace{0.1cm}

To any complex (rep. \'etale) supergroupoid $X_1\rightrightarrows X_0$, we can associate (as in \cite[p. 303-304]{ACG}) 
a complex (resp. DM) superstack, denoted by $[X_1\rightrightarrows X_0]$, whose fiber over a complex superspace $T$ is given by 
\begin{equation}\label{E:as-superstack}
[X_1\rightrightarrows X_0](T)=\lim_{T'\to T}\Hom(T'\times_T T' \rightrightarrows T',X_1\rightrightarrows X_0 )\,.
\end{equation}
where the (inverse) limit runs over all the surjective \'etale morphisms $T'\to T$ and $\Hom(T'\times_T T' \rightrightarrows T,X_1\rightrightarrows X_0 )$ is the category whose objects consists of pairs $(\Phi_0,\Phi_1)$ of morphisms $\Phi_0:T\to X_0$ and $\Phi_1:T'\times_T T'\to X_1$ satisfying the obvious compatibility relations. 
It is easily checked that $1$ and $2$-morphisms of complex supergroupoids induce $1$ and $2$-morphisms of the associated complex superstacks; hence we get a $2$-functor 
\begin{equation}\label{E:funcF}
\begin{aligned}
\F: \sGr &\longrightarrow \sSt,\\
X_1\rightrightarrows X_0 & \mapsto [X_1\rightrightarrows X_0],
\end{aligned}
\end{equation}
which sends $\esGr$ into $\esSt$. Similarly, one defines a $2$-functor 
$$\F_{\rm ev}: \Gr\to \St$$
which sends $\eGr$ into $\eSt$.  

\vspace{0.1cm}

Notice that the complex superstacks that are in the image of $\F$ comes with a canonical morphism  $X_0=[\xymatrix{X_0\ar@<-.5ex>[r]_{\id}\ar@<.5ex>[r]^{\id} & X_0}]\to [X_1\rightrightarrows X_0]$, which is surjective and smooth (resp.   \'etale if and only if the complex supergroupoid $X_1\rightrightarrows X_0$ is \'etale), i.e. it is an atlas of 
$[X_1\rightrightarrows X_0]$.
Conversely, given an atlas  $X_0\to \M$ of a complex superstack $\M$, the fiber product $X_1:=X_0\times_{\M} X_0$ is a complex superspace, the two projections (called source and target) $s,t:X_1\to X_0$ are smooth (and \'etale if and only if $\M$ is DM) and the composite map $(s,t):X_1\to X_0\times X_0$ is separated. Moreover there are the following morphisms: the unit morphism  $u:X_0\to X_1$ (the diagonal), the inverse $\iota:X_1\to X_1$ (which interchanges the two factors) and the composition morphism $m:X_1 {}_{s}{\times}_{t}X_1=X_0\times_{\M} X_0\times_{\M} X_0 \to X_1=X_0\times_{\M} X_0$ (which is the projection onto the first and third factor), which satisfy the relations \eqref{E:ax-grpd}. In order words, starting with a complex (resp. DM) superstack and an atlas $X_0\to \M$, we have constructed a complex (resp. \'etale) supergroupoid $X_1:=X_0\times_{\M}X_0\rightrightarrows X_0$, and  we have an isomorphism 
\begin{equation}\label{E:grp-pres}
\M=[X_1\rightrightarrows X_0],
\end{equation} 
which is called a \textbf{supergroupoid presentation} of $\M$ (clearly it depends on the chosen atlas $X_0\to \M$).

Observe that, given a supergroupoid presentation of $\M$ as in \eqref{E:grp-pres},  we have that $\M$ is smooth if and only if $X_0$ is smooth, and $\M$ is separated if and only if 
$(s,t):X_1\to X_0\times X_0$ is proper.

\vspace{0.1cm}

The natural inclusion and the bosonic truncation can be extended from complex (super)stacks to complex (super)groupoids.

\begin{proposition}\label{P:i-bosDM}
\noindent 
\begin{enumerate}[(i)]
\item \label{P:i-bosDM1} The association 
$$\begin{aligned}
i: \Gr & \longrightarrow \sGr, \\
\left(\xymatrix{
Y_1\ar@<-.5ex>[r]_{t}\ar@<.5ex>[r]^{s} & Y_0\\
}, u, \iota, m\right)  & \mapsto \left(\xymatrix{
i(Y_1)\ar@<-.5ex>[r]_{i(t)}\ar@<.5ex>[r]^{i(s)} & i(Y_0)\\
}, i(u), i(\iota), i(m)\right)
\end{aligned}
$$ 
defines a $2$-functor which sends $\eGr$ into $\esGr$ and such that $i([Y_1\rightrightarrows Y_0])=[i(Y_1)\rightrightarrows i(Y_0)]$.



\item \label{P:i-bosDM2} The association 
$$\begin{aligned}
(-)_{\bos}: \sGr & \longrightarrow \Gr, \\
\left(\xymatrix{
X_1\ar@<-.5ex>[r]_{t}\ar@<.5ex>[r]^{s} & X_0\\
}, u, \iota, m\right)
& \mapsto \left(\xymatrix{
(X_1)_{\bos}\ar@<-.5ex>[r]_{t_{\bos}}\ar@<.5ex>[r]^{s_{\bos}} & (X_0)_{\bos}\\
}, u_{\bos}, \iota_{\bos}, m_{\bos} \right)
\end{aligned}
$$ 
defines a $2$-functor which sends $\esGr$ into $\eGr$ and such that $[X_1\rightrightarrows X_0]_{\bos}=[(X_1)_{\bos}\rightrightarrows (X_0)_{\bos}]$.

\item \label{P:i-bosDM3} The $2$-functor $i$ is $2$-fully faithful and it is left adjoint of the $2$-functor $(-)_{\bos}$.

\end{enumerate}
\end{proposition}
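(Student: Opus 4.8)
The plan is to verify the three assertions essentially by unwinding the definitions and invoking the corresponding facts already established at the level of complex superspaces and complex superstacks. For part \eqref{P:i-bosDM1}, I would first check that $i$ is well-defined on objects: given a complex groupoid $Y_1 \rightrightarrows Y_0$, Remark \ref{R:incl-et} guarantees that $i(s), i(t)$ are smooth (resp.\ \'etale) and that $i((s,t))\colon i(Y_1)\to i(Y_0\times Y_0)$ is separated, while the fact that $i$ preserves fibre products (it has both adjoints, as recalled in Section \ref{sec:prel}) ensures that $i(m)$ has the right source $i(X_1)\, {}_{i(s)}\!\times_{i(t)}\, i(X_1)$; the groupoid axioms \eqref{E:ax-grpd} are preserved because $i$ is a functor respecting products and the diagonal/swap maps. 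On $1$- and $2$-morphisms, the compatibility conditions \eqref{E:comp-1mor} and \eqref{E:comp-2mor} are equations between composites of morphisms, so they transport verbatim under the functor $i$; functoriality of $i$ on $\S$ then gives that $i$ is a $2$-functor, and it obviously carries $\eGr$ into $\esGr$ since $i$ preserves \'etaleness. The identity $i([Y_1\rightrightarrows Y_0]) = [i(Y_1)\rightrightarrows i(Y_0)]$ should be checked by comparing fibres over a test superspace $T$: using the description of the fibre of $i(\N)$ as $\N(T/\Gamma)$, the description \eqref{E:as-superstack} of the associated superstack, the fact (Proposition \ref{P:etale}\eqref{P:etale2}) that $-/\Gamma$ preserves \'etale covers, and Lemma \ref{lem:quoziente} to identify the relevant fibre products $T'\times_T T'$ after applying $-/\Gamma$; both sides then compute the same limit of $\Hom$-categories.

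For part \eqref{P:i-bosDM2}, the argument is parallel but uses the bosonic truncation instead: $(-)_{\bos}$ sends smooth morphisms to smooth morphisms (Proposition \ref{P:smooth}), \'etale to \'etale (Proposition \ref{P:etale}\eqref{P:etale1}), preserves fibre products and the diagonal, and does not change underlying topological spaces, so separatedness of $(s,t)$ is preserved; hence $(X_1)_{\bos}\rightrightarrows (X_0)_{\bos}$ is again a complex (resp.\ \'etale) groupoid. Functoriality on $1$- and $2$-morphisms and the compatibility with $\F$, i.e.\ $[X_1\rightrightarrows X_0]_{\bos} = [(X_1)_{\bos}\rightrightarrows (X_0)_{\bos}]$, follow as before by comparing fibres over a complex space $S$: the fibre of $\M_{\bos}$ over $S$ is $\M(i(S))$, and since $i\colon \S_{\ev}\to\S$ preserves fibre products and \'etale covers (Remark \ref{R:incl-et}), the limit \eqref{E:as-superstack} computing $[X_1\rightrightarrows X_0](i(S))$ matches the limit computing $[(X_1)_{\bos}\rightrightarrows (X_0)_{\bos}](S)$, because any \'etale cover $S'\to S$ of complex spaces, pushed into $\S$ via $i$, is cofinal among \'etale covers of $i(S)$ that factor through bosonic objects, which is all that contributes.

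For part \eqref{P:i-bosDM3}, I would deduce both statements from the already-established adjunction at the level of complex superspaces, namely that $i\colon \S_{\ev}\to\S$ is left adjoint to $(-)_{\bos}\colon \S\to\S_{\ev}$ (Section \ref{sec:prel}), upgraded componentwise. That $i$ is $2$-fully faithful amounts to checking that for complex groupoids $Y_\bullet, Z_\bullet$ the functor $\Hom_{\Gr}(Y_\bullet, Z_\bullet)\to \Hom_{\sGr}(i(Y_\bullet), i(Z_\bullet))$ is an equivalence of categories; on objects this follows from $\Hom_{\S}(i(Y_j), i(Z_j)) = \Hom_{\S_{\ev}}(Y_j, Z_j)$ (full faithfulness of $i$ on superspaces) applied to the component maps $f_0, f_1$, and on $2$-morphisms from the same identification applied to $\alpha\colon Y_0\to Z_1$ versus $i(Y_0)\to i(Z_1)$; the compatibility equations correspond under these bijections. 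The adjunction $i\dashv (-)_{\bos}$ at the groupoid level: the unit $\eta_{X_\bullet}\colon i((X_\bullet)_{\bos})\to X_\bullet$ is the pair of closed embeddings $(X_j)_{\bos}\hookrightarrow X_j$, which is a morphism of supergroupoids because all structure maps are $\Gamma$-equivariant and hence restrict to the bosonic truncations, and the counit is an isomorphism because $((-)_{\bos}\circ i)$ is the identity on $\Gr$; the triangle identities then reduce to the triangle identities for the superspace adjunction on each component $X_0$ and $X_1$. The main obstacle I anticipate is the careful bookkeeping in the cofinality/compatibility arguments for $i([Y_\bullet]) = [i(Y_\bullet)]$ and $[X_\bullet]_{\bos} = [(X_\bullet)_{\bos}]$, i.e.\ making precise that applying $-/\Gamma$ (resp.\ $i$, resp.\ $(-)_{\bos}$) to the system of \'etale covers $T'\to T$ appearing in \eqref{E:as-superstack} does not lose information; everything else is a routine, if lengthy, diagram chase.
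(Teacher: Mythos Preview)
Your plan is correct in outline, and for the well-definedness and $2$-functoriality in parts \eqref{P:i-bosDM1}--\eqref{P:i-bosDM2}, as well as for part \eqref{P:i-bosDM3}, it follows essentially the same route as the paper: reduce everything to the corresponding properties of $i$ and $(-)_{\bos}$ at the level of complex (super)spaces (full faithfulness of $i$, the adjunction $i\dashv(-)_{\bos}$, preservation of fibre products, \'etaleness and smoothness, and invariance of the underlying topological space).

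The one genuine divergence is in how you verify the compatibilities $i([Y_\bullet])=[i(Y_\bullet)]$ and $[X_\bullet]_{\bos}=[(X_\bullet)_{\bos}]$. You propose a direct fibre-by-fibre comparison via the limit formula \eqref{E:as-superstack}, which, as you correctly flag, forces a cofinality statement: for \eqref{P:i-bosDM1} you need that every \'etale surjective cover $T''\to T/\Gamma$ in $\S_{\rm ev}$ is refined by some $T'/\Gamma\to T/\Gamma$ coming from an \'etale surjective $T'\to T$ in $\S$ (this is true---one can take $\O_{T'}$ to be the topological pullback of $\O_T$ along the continuous map underlying $T''\to T$---but it is extra work); for \eqref{P:i-bosDM2} the cofinality is easier since any \'etale cover of a bosonic superspace is automatically bosonic. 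The paper sidesteps this bookkeeping entirely by using the atlas characterisation \eqref{E:grp-pres} instead: with $\N:=[Y_1\rightrightarrows Y_0]$, since $i$ preserves smooth maps and fibre products the morphism $i(Y_0)\to i(\N)$ is an atlas and $i(Y_0)\times_{i(\N)}i(Y_0)=i(Y_0\times_{\N}Y_0)=i(Y_1)$, whence $i(\N)=[i(Y_1)\rightrightarrows i(Y_0)]$; the argument for $(-)_{\bos}$ is identical. This is shorter and avoids precisely the obstacle you anticipated.

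One minor slip in your part \eqref{P:i-bosDM3}: for the adjunction $i\dashv(-)_{\bos}$ the map $i((X_\bullet)_{\bos})\to X_\bullet$ is the counit, not the unit, and it is the unit $Y_\bullet\to (i(Y_\bullet))_{\bos}$ that is an isomorphism; the substance of your argument is unaffected.
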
 

\begin{proof}
\eqref{P:i-bosDM1}: consider a separated smooth (resp. \'etale) complex groupoid $Y_1\rightrightarrows Y_0$ with maps $(s,t,u, \iota, m)$ satisfying \eqref{E:ax-grpd}. Since $i$ preserves fibre products, \'etaleness or smoothness  and it does not change the underlying topological spaces, the pair of complex superspaces   $i(Y_1), i(Y_0)$ together with maps $(i(s),i(t),i(u), i(\iota), i(m))$ satisfies  \eqref{E:ax-grpd}, the morphism $(i(s),i(t)):i(Y_1)\to i(Y_0)\times i(Y_0)$ is separated and the maps $i(s)$ and $i(t)$ are smooth (resp. \'etale); hence it defines a separated smooth (resp. \'etale) complex supergroupoid $i(Y_1)\rightrightarrows i(Y_0)$. The fact that $i$ is a $2$-functor follows easily from the definition of $1$-morphisms and $2$-morphisms together with the above mentioned properties of $i$. 

Let $\N:=[Y_1\rightrightarrows Y_0]$ be the complex stack associated to  $Y_1\rightrightarrows Y_0$. Since $i$ preserves smooth maps and fiber products, the map $i(Y_0)\to i(\N)$ is smooth and it holds that $i(Y_0)\times_{i(\N)}i(Y_0)=i(Y_0\times_{\N} Y_0)=i(Y_1)$; this implies that $i(\N)=[i(Y_1)\rightrightarrows i(Y_0)]$, as required.

The proof of part \eqref{P:i-bosDM2} is similar to the proof of part  \eqref{P:i-bos-DM1} using that the bosonic truncation preserves fiber products, \'etaleness of morphisms  (see Proposition \ref{P:etale}\eqref{P:etale1}), smoothness of morphism (see Proposition \ref{P:smooth}), and it does not change the underlying topological spaces. 

Part \eqref{P:i-bosDM3}: the fact that $i$ is $2$-fully faithful, i.e. that for any two objects $Y_{\bullet}, Y'_{\bullet}\in \Gr$ we have an equivalence of categories 
$i(-):\Hom(Y_{\bullet}, Y'_{\bullet})\to \Hom(i(Y_{\bullet}), i(Y'_{\bullet}))$, follows from the fact that $i$ defines a fully faithful embedding of $\S_{\ev}$ into $\S$ together with the fact that $i$
preserves fibre products.  The fact that $i$ is right adjoint of the functor $(-)_{\bos}$, i.e. that for any $X_{\bullet}\in \sGr$ and any $Y_{\bullet}\in \Gr$ there is an equivalence of categories
$\Hom_{\sGr}(i(Y_\bullet), X_\bullet,)\cong \Hom_{\Gr}(Y_\bullet, (X_\bullet)_{\bos})$, follows  from the fact that $i: \S_{\ev}\to \S$ is left adjoint of $(-)_{\bos}:\S\to \S_{\ev}$ 
 together with the fact that $i$ and $(-)_{\bos}$ both preserve fibre products. 

\end{proof}

The aim of the remaining part of this section  is to define the bosonic quotient. We will define it for \'etale complex supergroupoids and then descent it to DM complex superstacks.
In order to do this, we have to study more careful the properties of the $2$-functor $\F: \esGr\to \esSt$ of \eqref{E:funcF}  (and of the analogous $2$-functor $\F_{\rm ev}:\eGr\to \eSt$). 

First of all, since every DM complex  superstack admits a supergroupoids  presentation as in \eqref{E:grp-pres}, we deduce that the $2$-functor $\F$ is essentially surjective. 
However, the $2$-functor $\F$ is not an equivalence of $2$-categories since there are some non-invertible morphisms of \'etale complex supergroupoids which become invertible, i.e. equivalences, at the level of DM complex superstacks. Indeed, it turns out that  the $2$-functor $\F$ realizes $\esSt$ as a suitable localization of $\esGr$, as we are now going to recall.

\vspace{0.1cm}

In \cite{Pro} (generalizing the construction of \cite{GZ} for $1$ categories), it is explained how to define the {\bf localization} $\mathcal{C}[W^{-1}]$ of a 2-category $\mathcal{C}$ at a convenient collection of 1-morphisms $W$, namely those that admit
a right calculus of fractions (see \cite[Sec. 2.1]{Pro}). Let us recall that:
\begin{itemize}
\item  The objects of $\mathcal{C}[W^{-1}]$ are the objects of $\mathcal{C}$.
\item A $1$-morphism of $\cC[W^{-1}]$ between two objects  $A$ and $B$ is a diagram of $1$-morphisms of $\cC$ (sometimes called a \emph{roof})
$$
A \xleftarrow{w} C\xrightarrow{f} B
$$
such that  $w\in W$. For the composition of $1$-morphisms, see \cite[Sec. 2.2]{Pro}.
\item A $2$-morphism of $\cC[W^{-1}]$ between two $1$-morphisms $A \xleftarrow{w_1} C_1\xrightarrow{f_1} B$ and $A \xleftarrow{w_2} C_2\xrightarrow{f_2} B$
is a quadruple $(u_1,u_2,\alpha,\beta)$, where  $u_1:E\to C_1$ and $u_2:E\to C_2$ are $1$-morphisms of $\cC$ such that $w_1\circ u_1, w_2\circ u_2\in W$
while $\alpha: w_1\circ u_1\Rightarrow w_2\circ u_2$ and $\beta:f_1\circ u_1\Rightarrow f_2\circ u_2$ are $2$-morphisms of $\cC$.
Such a quadruples are considered up to a natural equivalence relation, see \cite[Sec. 2.3]{Pro}. The horizontal and vertical compositions of $2$-morphisms are defined in \cite[Sec. 2.3]{Pro}.
\end{itemize}
The localization $\cC[W^{-1}]$ is not in general a 2-category, but only a bicategory: this means that the  associativity and unity laws of horizontal composition hold only up to natural isomorphisms (and indeed $\cC[W^{-1}]$ can be constructed even starting with a bicategory $\cC$).

There is a pseudofunctor (i.e. a functor which preserves composition and identities only up to natural isomorphisms) of bicategories 
$U:\cC\to \cC[W^{-1}]$ which is defined by:
\begin{equation*}
\begin{sis}
& U(A)=A  \text{ for any object } A \text{ of } \cC,\\
& U(f)=(A \xleftarrow{\id_A} A\xrightarrow{f} B)   \text{ for any $1$-morphism } f:A\to B \text{ of } \cC,\\
& U(\alpha)=(\id_A, \id_A, \id_{\id_A}, \alpha)  \text{ for any $2$-morphism } \alpha:f\Rightarrow g \text{ of } \cC.
\end{sis}
\end{equation*}
The homomorphism $U$ sends $1$-morphisms in $W$ into equivalences of $\cC[W^{-1}]$ and it is  universal  among the pseudofunctors of bicategories $\cC\to \cD$ that send $1$-morphisms in $W$ into equivalences of $\cD$ (see \cite[Sec. 3]{Pro}).

\vspace{0.1cm}

Let us go back to \'etale complex (super)groupoids. 
A morphism $f=(f_1,f_0)\colon X_{\bullet}\to Y_{\bullet}$ between two \'etale complex (super)groupoids is called a \emph{weak equivalence} if $f_0\colon X_0 \to Y_0$ and $f_1:X_1\to Y_1$ are \'etale and surjective,
and 
$$s\times f_1 \times t  \colon X_1 \to  X_0\, {}_{f_0}\times_{s}\, Y_1\,  {}_{t}   \times_{f_0} \,X_0$$
is an isomorphism of complex (super)spaces. We denote by $W$ (resp. $W_{\ev}$) the collection of all weak equivalences among \'etale complex supergroupoids (resp. groupoids). 

Mimic the proof of \cite[Sec. 4.1]{Pro} for \'etale topological groupoids, it can be shown that $W$ and $W_{\ev}$ admits a right calculus of fraction. Therefore, we can form the localizations 
$\esGr[W^{-1}]$ and $\eGr[W_{\ev}^{-1}]$.  

Arguing as in \cite[Thm. 3]{Moe} (where the author deals with \'etale topological groupoids), one can show that if $f\in W$ (resp. $f\in W_{\ev}$) then $\F(f)$ (resp. $\F_{\ev}(f)$) is an equivalence of complex supergroupoids (resp. groupoids). 
Therefore, from the universal property of the localization recalled above, the strict $2$-functors $\F$ and $\F_{\ev}$ factors through pseudofunctors of bicategories
\begin{equation}\label{E:loc-mor}
\F_{\loc} \colon \esGr[W^{-1}] \to \DMs \hspace{0.3cm} \text{ and } \hspace{0.3cm} (\F_{\ev})_{\loc}: \sGr[W_{\ev}^{-1}]\to \DM.
\end{equation}

Adapting the proof of \cite[Sec. 7]{Pro} for  DM algebraic stacks (which is similar to the proof for topological stacks in Sec. 5 of loc. cit. and differentiable stacks in Sec. 6 of loc. cit.), it can be shown that $\F_{\loc}$ and $(\F_{\ev})_{\loc}$ are isomorphism of bicategories.

\vspace{0.1cm}

We have now all the ingredients we need to introduce the bosonic quotient for DM complex superstacks by descending it from the bosonic quotient at the level of \'etale complex supergroupoids. In what follows, etalness will play a key role via Lemma \ref{lem:quoziente}.

\begin{theorem}\label{T:DMtrunc}
\noindent 
\begin{enumerate}[(i)]
\item \label{T:DMtrunc1} 
The association 
$$\begin{aligned}
-/\Gamma: \esGr & \longrightarrow \eGr, \\
\left(\xymatrix{
X_1\ar@<-.5ex>[r]_{t}\ar@<.5ex>[r]^{s} & X_0\\
}, u, \iota, m\right)
& \mapsto \left(\xymatrix{
X_1/\Gamma\ar@<-.5ex>[r]_{t/\Gamma}\ar@<.5ex>[r]^{s/\Gamma} & X_0/\Gamma\\
}, u/\Gamma, \iota/\Gamma, m/\Gamma\right)
\end{aligned}
$$ 
defines a $2$-functor which is left adjoint to the natural inclusion $i:\eGr\to \esGr$.


\item \label{T:DMtrunc2} The bosonic truncation defined above preserves weak equivalences, hence it gives a pseudo-functor
$$
\begin{aligned}
/\Gamma \colon \DMs & \longrightarrow \DM \\
\M=[X_1\rightrightarrows X_0] & \mapsto \M/\Gamma:=[X_1/\Gamma\rightrightarrows X_0/\Gamma]
\end{aligned}
$$
which is left adjoint to the natural inclusion $i:\DM \to \DMs$.  Moreover, a DM complex  superstack $\M$ is separated if and only if $\M/\Gamma$ is separated. 
\end{enumerate}
\end{theorem}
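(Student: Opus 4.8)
\emph{Part \eqref{T:DMtrunc1}.} The plan is to run the same verification that was used for the natural inclusion and the bosonic truncation in Proposition \ref{P:i-bosDM}, but now invoking \'etaleness in an essential way. First I would check that $-/\Gamma$ is well defined on objects: given an \'etale complex supergroupoid $X_1\rightrightarrows X_0$, the maps $s/\Gamma, t/\Gamma \colon X_1/\Gamma\to X_0/\Gamma$ are \'etale morphisms of complex spaces by Proposition \ref{P:etale}\eqref{P:etale2}. The key point is that all the fibre products occurring in the groupoid axioms \eqref{E:ax-grpd} (the ones defining $X_1\,{}_s\!\times_t X_1$, etc.) are taken along the \'etale maps $s$ and $t$, so Lemma \ref{lem:quoziente} gives $(X_1\,{}_s\!\times_t X_1)/\Gamma \cong X_1/\Gamma\,{}_{s/\Gamma}\!\times_{t/\Gamma} X_1/\Gamma$; hence $m/\Gamma$, $u/\Gamma$, $\iota/\Gamma$ are defined on the right spaces and, since $-/\Gamma$ does not change underlying topological spaces and is a functor on complex superspaces, the identities \eqref{E:ax-grpd} are preserved and $(s/\Gamma,t/\Gamma)$ stays separated. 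The same reasoning applied to the compatibility relations \eqref{E:comp-1mor} and \eqref{E:comp-2mor} shows $-/\Gamma$ is a $2$-functor. For the adjunction $-/\Gamma \dashv i$, I would argue exactly as in the proof of Proposition \ref{P:i-bosDM}\eqref{P:i-bosDM3}: the $2$-functor $i\colon \eGr\to \esGr$ is $2$-fully faithful, $-/\Gamma\colon \S\to \S_{\rm ev}$ is left adjoint to $i\colon\S_{\rm ev}\to\S$, and both $-/\Gamma$ and $i$ preserve the \'etale fibre products appearing in morphism and $2$-morphism spaces of groupoids (using Lemma \ref{lem:quoziente} again); this yields the equivalence of categories $\Hom_{\eGr}(X_\bullet/\Gamma, Y_\bullet)\cong \Hom_{\esGr}(X_\bullet, i(Y_\bullet))$ for $X_\bullet\in\esGr$, $Y_\bullet\in\eGr$.

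\emph{Part \eqref{T:DMtrunc2}.} Here I must show that $-/\Gamma$ sends a weak equivalence $f=(f_1,f_0)\colon X_\bullet\to Y_\bullet$ to a weak equivalence $f/\Gamma=(f_1/\Gamma, f_0/\Gamma)$. By Proposition \ref{P:etale}\eqref{P:etale2} the maps $f_0/\Gamma$ and $f_1/\Gamma$ remain \'etale; surjectivity is preserved because $-/\Gamma$ does not alter topological spaces. The remaining condition is that
$$
s\times f_1 \times t \colon X_1 \to X_0\,{}_{f_0}\!\times_s Y_1\,{}_t\!\times_{f_0} X_0
$$
be an isomorphism; applying $-/\Gamma$ and using Lemma \ref{lem:quoziente} — both the outer fibre products are along $s$, $t$ (\'etale) — I get that $(s/\Gamma)\times(f_1/\Gamma)\times(t/\Gamma)$ is the $-/\Gamma$ of that isomorphism, hence an isomorphism of complex spaces. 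So $-/\Gamma$ preserves $W$, and therefore by the universal property of the localization (and the isomorphisms of bicategories $\F_{\loc}$, $(\F_{\ev})_{\loc}$ in \eqref{E:loc-mor}) it descends to a pseudofunctor $/\Gamma\colon \DMs\to \DM$ with $\M/\Gamma := [X_1/\Gamma\rightrightarrows X_0/\Gamma]$ independent, up to equivalence, of the chosen \'etale atlas. The adjunction $/\Gamma\dashv i$ on bicategories then follows formally from the adjunction of part \eqref{T:DMtrunc1} together with the fact that localization is compatible with $i$ (Proposition \ref{P:i-bosDM}\eqref{P:i-bosDM1}) and that $\F_{\loc}$, $(\F_{\ev})_{\loc}$ are isomorphisms of bicategories.

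\emph{The separatedness statement.} Finally, for the last assertion: fixing an \'etale atlas, $\M$ is separated iff $(s,t)\colon X_1\to X_0\times X_0$ is proper (as recalled after \eqref{E:grp-pres}), and $\M/\Gamma$ is separated iff $(s/\Gamma, t/\Gamma)\colon X_1/\Gamma\to X_0/\Gamma\times X_0/\Gamma$ is proper. I would deduce the equivalence from the fact that properness is a property of the underlying continuous map, that $-/\Gamma$ leaves topological spaces unchanged, and that the natural map $X_0/\Gamma\times X_0/\Gamma \to (X_0\times X_0)/\Gamma$ is a homeomorphism on underlying spaces by Lemma \ref{lem:quoziente}; thus $(s,t)$ and $(s/\Gamma,t/\Gamma)$ are the same map of topological spaces, so one is proper iff the other is. The main obstacle I anticipate is a bookkeeping one rather than a conceptual one: making sure that \emph{every} fibre product entering the groupoid structure, the morphism conditions, and the weak-equivalence condition is of the form covered by Lemma \ref{lem:quoziente} (i.e. at least one leg is \'etale) — this is exactly why the construction works for \'etale supergroupoids but, as noted in Remark \ref{R:bad-Gamma} and the introduction, fails for general smooth ones.
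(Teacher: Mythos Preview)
Your proposal follows essentially the same route as the paper's proof, and the overall architecture --- use Proposition \ref{P:etale}\eqref{P:etale2} for \'etaleness, Lemma \ref{lem:quoziente} for the fibre products, then descend through the localization via \eqref{E:loc-mor} --- is exactly right.

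There is one place where your bookkeeping slips, and it is precisely the kind of slip you warn yourself about at the end. You assert that ``all the fibre products occurring in the groupoid axioms \eqref{E:ax-grpd} are taken along the \'etale maps $s$ and $t$''. This is true for the last three axioms (those involving $X_1\,{}_s\!\times_t X_1$), but the first three axioms --- $(s,t)\circ u=\Delta_{X_0}$, $(s,t)\circ\iota=\eta_{X_0}\circ(s,t)$, $(s,t)\circ m=(s\times t)$ --- land in the \emph{absolute} product $X_0\times X_0$, which is a fibre product over a point, not over an \'etale map. Lemma \ref{lem:quoziente} therefore does \emph{not} give $(X_0\times X_0)/\Gamma\cong X_0/\Gamma\times X_0/\Gamma$; it only says the natural map $\pi\colon (X_0\times X_0)/\Gamma\to X_0/\Gamma\times X_0/\Gamma$ is a homeomorphism on underlying spaces. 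The paper handles this by first applying $-/\Gamma$ to the original identities (getting equations in $(X_0\times X_0)/\Gamma$) and then post-composing with $\pi$, using the obvious compatibilities $\pi\circ\Delta_{X_0}/\Gamma=\Delta_{X_0/\Gamma}$, $\pi\circ(s,t)/\Gamma=(s/\Gamma,t/\Gamma)$, etc. This extra step is short but not automatic, and it is the one place your outline would need to be patched. The same subtlety reappears in the separatedness argument, where you correctly note that $\pi$ is a homeomorphism --- that is exactly what makes the properness equivalence go through even though $\pi$ is not an isomorphism of complex spaces.
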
 
Note that the bosonic quotient functor does not preserve smoothness (see Remark \ref{R:bad-Gamma}). Let us also stress once more that the bosonic truncation introduced in \eqref{T:DMtrunc1} preserves the composition of 1-morphisms and the identities; whereas the bosonic truncation introduced in  \eqref{T:DMtrunc2} preserves the composition of $1$-morphisms and identities only up to natural isomorphisms, because we need to pass trough the localization at the weak equivalences.

\begin{proof}
Part \eqref{T:DMtrunc1}.
Consider an \'etale separated complex supergroupoid $\left(\xymatrix{
X_1\ar@<-.5ex>[r]_{t}\ar@<.5ex>[r]^{s} & X_0\\
}, u, \iota, m\right)$ and let us show that $\left(\xymatrix{
X_1/\Gamma\ar@<-.5ex>[r]_{t/\Gamma}\ar@<.5ex>[r]^{s/\Gamma} & X_0/\Gamma\\
}, u/\Gamma, \iota/\Gamma, m/\Gamma\right)$ is an \'etale separated complex groupoid. 
The map $(s/\Gamma,t/\Gamma):X_1/\Gamma\to X_0/\Gamma\times X_0/\Gamma$ is separated since the bosonic 
quotient does not change the underlying topological spaces. The maps $s/\Gamma$ and $t/\Gamma$ are \'etale since the bosonic  quotient preserves \'etale maps by Proposition \ref{P:etale}\eqref{P:etale2}.  Therefore, we are left with checking the axioms \eqref{E:ax-grpd}. The last three axioms follow from the fact that the bosonic quotient preserves fiber products as soon as one of the maps is \'etale, see Lemma \ref{lem:quoziente}. In order to prove the first three axioms for 
$X_1/\Gamma\rightrightarrows X_0/\Gamma$, we first apply the bosonic quotient functor to the corresponding three axioms for $X_1\rightrightarrows X_0$ and we get 
\begin{equation}\label{E:equa1}
(s,t)/\Gamma\circ u/\Gamma=\Delta_{X_0}/\Gamma, \hspace{0.5cm} (s,t)/\Gamma\circ \iota/\Gamma= \eta_{X_0}/\Gamma\circ (s,t)/\Gamma, \hspace{0.5cm} (s,t)/\Gamma\circ m/\Gamma=(s\times t)/\Gamma.
\end{equation}
Consider the natural map $\pi\colon (X_0\times X_0)/\Gamma \to X_0/\Gamma\times X_0/\Gamma$ introduced in Lemma \ref{lem:quoziente}. By functoriality, we clearly have that 
\begin{equation}\label{E:equa2}
\pi\circ \Delta_{X_0}/\Gamma=\Delta_{X_0/\Gamma}, \hspace{0.5cm} \pi\circ  (s,t)/\Gamma=(s/\Gamma,t/\Gamma), \hspace{0.5cm} \pi\circ \eta_{X_0}=\eta_{X_0/\Gamma}\circ \pi, \hspace{0.5cm} \pi\circ (s\times t)/\Gamma=(s/\Gamma\times t/\Gamma). 
\end{equation}
Post composing with $\pi$ the relations in \eqref{E:equa1} and using the identities in \eqref{E:equa2}, we get the desired first three axioms for $X_1/\Gamma\rightrightarrows X_0/\Gamma$.

Given a $1$-morphism $f=(f_1,f_0):X_{\bullet}\to X_{\bullet}'$,  the morphisms $f_i/\Gamma:X_i/\Gamma\to X_i'/\Gamma$ for $i=0,1$ satisfy the relations \eqref{E:comp-1mor}: the fist four identities are obvious and the last one follows from the fact that  $X_1 {}_{s}{\times}_{t}X_1/\Gamma=X_1/\Gamma {}_{s/\Gamma}{\times}_{t/\Gamma}X_1/\Gamma$, and similarly for $X_1'$, by Lemma \ref{lem:quoziente} which can be applied because $s$ (and also $t$) are \'etale. Hence we get a  $1$-morphism $f/\Gamma:=(f_1/\Gamma,f_0/\Gamma):X_{\bullet}/\Gamma\to X'_{\bullet}/\Gamma$. 

Given a $2$-morphism $\alpha\colon X_0\to X_1'$ between two $1$-morphisms $f=(f_1,f_0),g=(g_1,g_0)\colon X_{\bullet}\to X'_{\bullet}$, the morphism $\alpha/\Gamma\colon X_0/\Gamma\to X_1'/\Gamma$ satisfies the relations \eqref{E:comp-2mor}: the first two identities are obvious and the last one  again from  follows from the fact that  $X_1 {}_{s}{\times}_{t}X_1/\Gamma=X_1/\Gamma {}_{s/\Gamma}{\times}_{t/\Gamma}X_1/\Gamma$ and the similar fact for $X_1'$. Hence we get a $2$-morphism $\alpha/\Gamma:X_0/\Gamma\to X_1'/\Gamma$ 
between the two $1$-morphisms $f/\Gamma,g/\Gamma\colon X_{\bullet}/\Gamma\to X'_{\bullet}/\Gamma$. 

Finally, the fact that $-/\Gamma$ is left adjoint of  $i$, i.e. that for any $X_{\bullet}\in \esGr$ and any $Y_{\bullet}\in \eGr$ there is an equivalence of categories
$\Hom_{\eGr}(X_\bullet/\Gamma,Y_\bullet)\cong \Hom_{\esGr}(X_\bullet, i(Y_\bullet))$, follows  from the fact that $-/\Gamma: \S\to \S_{\ev}$ is left adjoint of $i:\S_{\rm ev}\to \S$ 
 together with the fact that $i$  preserve fibre products and $-/\Gamma$ preserves fiber products in which at least one of the maps is \'etale by Lemma \ref{lem:quoziente}.

Part \eqref{T:DMtrunc2}: let us show that the bosonic quotient preserves weak equivalences. Let $f=(f_1,f_0):X_\bullet\to Y_\bullet$ a weak equivalence of \'etale complex supergroupoids. 
Since $f_0$ and $f_1$ are \'etale, we get that $f_0/\Gamma$ and $f_1/\Gamma$ are \'etale by Proposition \ref{P:etale}\eqref{P:etale2}.
By Lemma  \ref{lem:quoziente}, we have a  canonical isomorphism 
$$
( X_0 {}_t\times_{f_1} Y_1 {}_{f_1}   \times_s X_0 )/\Gamma \cong    X_0/\Gamma {}_t\times_{f_1} Y_1/\Gamma {}_{f_1}   \times_s X_0/\Gamma.
$$
This implies that $s/\Gamma\times f_1/\Gamma\times t/\Gamma$ is equal to $(s\times f_1\times t)/\Gamma$, and hence that is an isomorphism.  We conclude that $f/\Gamma:X_{\bullet}/\Gamma\to Y_{\bullet}/\Gamma$ is a weak equivalence of \'etale complex groupoids.  

Using that the bosonic quotient preserves weak equivalences and the universal property of the localization recalled above, we infer that the bosonic quotient descends to a pseudofunctor 
$$-/\Gamma: \esGr[W^{-1}] \longrightarrow \eGr[W_{\ev}^{-1}].$$
Using that the functors $\F_{\loc}$ and $(\F_{\ev})_{\loc}$ of \eqref{E:loc-mor} are equivalences of bicategories, we deduce the existence of the required bosonic truncation pseudofunctor 
$$/\Gamma \colon \DMs  \longrightarrow \DM. $$
The fact that this bosonic quotient pseudofunctor $-/\Gamma$ is left adjoint of the natural inclusion $i:\DM\to \DMs$ follows from the analogous statement at the level of \'etale complex (super)groupoids, which was proved in \eqref{T:DMtrunc1}.

Finally, $\M=[X_1\rightrightarrows X_0]$ is separated if and only if $(s,t):X_1\to X_0\times X_0$ is proper, similarly for  $\M/\Gamma:=[X_1/\Gamma\rightrightarrows X_0/\Gamma]$. Again because of Lemma \ref{lem:quoziente}, since $ \pi\circ  (s,t)/\Gamma=(s/\Gamma,t/\Gamma)$ and both the bosonic truncation and the morphism $\pi\colon (X_0\times X_0)/\Gamma \to X_0/\Gamma\times X_0/\Gamma$ do not change the underlying topological spaces, we have that $(s,t):X_1\to X_0\times X_0$ is proper if and only if $(s/\Gamma,t/\Gamma):X_1/\Gamma\to X_0/\Gamma\times X_0/\Gamma$ is proper.

\end{proof}


\begin{remark} It is possible to define a  canonical automorphism $\Gamma_{\c}$  on a category fibered in groupoids $(\c,\F)$. Given an object $\eta$, we define $\Gamma(\eta)$ as the pull-back of $\eta$ via the automorphism $\Gamma$ of the complex superspace $\F(\eta)$. When the category fibered in groupoids comes from a groupoid, then the action of $\Gamma$ is the natural action on the presentation.

At least in the case of Deligne-Mumford superstacks, one could use a standard argument to define the quotient of $\c$ by $\Gamma_{\c}$, see for instance \cite{Rom}. However, given an ordinary complex space $X$, the quotient of the complex superstack $\underline{i(X)}$ by $\Gamma$ is not isomorphic isomorphic to $\underline{i(X)}$: for instance, if $X$ is a point, the quotient is $B\mathbb{Z}_2$. This means that this sort of bosonic quotient is not the left adjoint of the natural inclusion $i$ from DM complex stacks to DM complex superstacks; in particular, it is different from the definition that we gave above, and we will not discuss it further in this paper.

\end{remark}

\end{section}

\section{Susy curves}\label{sec:susy}

\subsection{The definition}

 Let us start with the definition of $1|1$ curves and susy\footnote{susy stands for supersymmetric; sometimes, they are called SUSY$_1$ (as in \cite{Manin2}) or super Riemann surfaces (as in \cite{DW1}).} curves over a fixed complex superspace $S$. 
  
 \begin{definition}\label{def-susy}
 \noindent 
 \begin{enumerate} 
 \item  A \textbf{$1|1$ curve}  over $S$ is a complex superspace $\c$ endowed with a proper smooth morphism
$
\pi \colon \c \to S
$
of relative dimension $1|1$. In particular, the fibers of $\pi$ (over $\C$-points of $S$) are proper complex supermanifolds whose bosonic truncations are compact Riemann surfaces of a certain genus $g$, which is called  the genus of $\c$.
\item A \textbf{susy curve} $(\c,\D)$ over $S$ is a   $1|1$ curve $\c$ over $S$ endowed with a rank $0|1$ maximally non-integrable distribution $\D\subset T_{\c/S}$; maximally non-integrable means that the map  
\begin{equation}\label{bracket-iso}
\begin{aligned}
[-,-]: \D^2 & \longrightarrow T_{\c/S}/\D, \\
t_1\otimes t_2 & \mapsto [t_1,t_2] \mod \D
\end{aligned}
\end{equation}
is an isomorphism. In particular, we get an exact sequence of locally free sheaves 
\begin{equation}\label{E:exactD}
0\to \D \to T_{\c/S}\to \D^2\to 0.
\end{equation}
\end{enumerate}
\end{definition}

We should think to $1|1$ (resp. susy) curves over $S$ as a family of $1|1$ (resp. susy) curves parametrized by $S$. For this reason, 
coordinates on $S$ are sometimes called the moduli of the family; in particular, odd coordinates on $S$ are called the odd moduli of $\c$.
Sometimes we denote a susy curve $(\c,\D)$ just by $\c$.

Given two susy curves $(\c,\D)$ and  $(\c',\D')$ over, respectively, $S$ and $S'$, a morphism $F: (\c,\D)\to (\c',\D')$ is a commutative diagram of morphisms of complex superspaces
\begin{displaymath}
\begin{array}{ccc}
\c  &\xrightarrow{F} &\c' \\
\downarrow&&\downarrow \\
S&\xrightarrow{f}  &S'
\end{array}
\end{displaymath}
such that $dF(\D)=\D'$. In this way, we get a category of susy curves over $S$, whose objects are susy curves over $S$ and whose morphisms are morphisms of susy curves.  

In the special case where $(\c,\D)$ and  $(\c',\D')$ are two susy curves over the same base $S$, we introduce a slightly stronger notion: a morphism $F: (\c,\D)\to (\c',\D')$ \emph{over $S$} is a commutative diagram as above in which $f=\id_S$.

We should often be using the fact (see \cite[Lemma 3.1]{DW1}) that locally at every point of a susy curve $(\c, \D)$ over $S$ we can choose coordinates $z$ and $\theta$ (called  \emph{superconformal coordinates}) such that  $z$ is a local relative coordinate for the underlying families of Riemann surfaces
and the distribution $\D$ is locally generated  by the odd vector field
$$
\frac{\partial}{\partial\theta}+\theta \frac{\partial}{\partial z}.
$$
Note that, in such coordinates, $[\D,\D]$ is generated by the even vector field $\displaystyle \frac{\partial}{\partial z}$.

Given a $1|1$ curve $\c$ over $S$, its bosonic truncation $C:=\c_{\bos}$ is a curve over $S_{\bos}$ (i.e. a smooth family of compact Riemann surfaces). On the other other hand, the 
pullback $\c_{\spli}:=\c\times_S S_{\bos}$  is smooth of relative dimension $1|1$ over a bosonic base; hence $L:=\n_{\c_{\spli}}/\n_{\c_{\spli}}^2$  is a line bundle over $C$ and we have that $\c_{\spli}=C_L$.
Following \cite{Giulio}, we say that $\c_{\spli}$ is the split model of $\c$ (some authors say that $\c_{\spli}$ is obtained by ``turning off the odd moduli"). More generally, for any $k\geq 1$, we denote by $\c_k$ the pull-back of $\c$ to $S^{(k)}$: this is  a susy curve over the $k$-th infinitesimal thickening of $S_{\bos}$ in $S$. Note that $\c_1=\c_{\spli}$.

\subsection{Relation with spin curves}\label{S:susy-spin}
Recall that a spin curve $(C,L, \phi)$ over a complex (bosonic) space  $S$ is a curve $C$ over  $S$, together with a line bundle $L$ on $C$ and an isomorphism
$
\phi\colon K_{C/S} \stackrel{\cong}{\longrightarrow} L\otimes L\,.
$
On a spin curve we always have an action of $\mu_2$, which acts as the identity on $C$ and $\phi$ and as $-1$ on the line bundle $L$. The following is a reformulation of standard results; here we would like to stress the role of the canonical automorphism $\Gamma$.

\begin{proposition}\label{susy_spin}
Let $S$ be an ordinary complex space.  There exists an equivalence of categories between spin curves over $S$ and susy curves over $S$. Under this equivalence, the action of $\mu_2$ is mapped to the canonical automorphism $\Gamma$.
\end{proposition}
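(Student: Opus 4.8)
The plan is to construct the two functors explicitly, check that they are mutually quasi-inverse, and then match the two group actions; this is essentially the classical correspondence (cf. \cite{Manin2}), organized so as to bring out the role of $\Gamma$. For the direction from spin curves to susy curves, given $(C,L,\phi)$ over $S$ I would set $\c:=C_L=(C,\bigwedge^{\bullet}L)$, which is a $1|1$ curve over $S$ through $C_L\to C\to S$, and build $\D$ by gluing local pieces: cover $C$ by charts on which $L$ is trivialized by an odd generator $\theta$ and on which one may choose a relative coordinate $z$ for $C\to S$ with $\phi(dz)=\theta\otimes\theta$, and on each such chart declare $\D$ to be generated by $\frac{\partial}{\partial\theta}+\theta\frac{\partial}{\partial z}$. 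The key point is that the hypothesis that $\phi\colon K_{C/S}\xrightarrow{\cong}L\otimes L$ is an isomorphism is exactly what forces the transition between two such charts to be a superconformal coordinate change, so that these locally defined rank $0|1$ subsheaves of $T_{\c/S}$ agree on overlaps and patch to a global distribution $\D$; maximal non-integrability then reduces to the local statement that $[\D,\D]$ is generated by $\frac{\partial}{\partial z}$, so that \eqref{bracket-iso} is an isomorphism. A morphism of spin curves over $S$ tautologically induces a morphism of the associated split curves carrying $\D$ to $\D'$, so this assignment is functorial.

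Conversely, let $(\c,\D)$ be a susy curve over the ordinary complex space $S$. Then $\c\to S$ factors through $C:=\c_{\bos}$, and, $\c$ being smooth of relative dimension $1|1$ over a bosonic base, $L:=\n_\c/\n_\c^2$ is a line bundle on $C$ and $\c$ is split, $\c\cong C_L$ over $S$, by the splitting criterion of Section~\ref{sec:prel} for superspaces whose $\n/\n^2$ is a line bundle. I would then apply the (right-exact) bosonic truncation to the locally split exact sequence \eqref{E:exactD}: a local computation in superconformal coordinates (see \cite[Lemma 3.1]{DW1}) gives $(T_{\c/S})_{\bos}=T_{C/S}\,|\,L^\vee$, with the local generator of $\D$ reducing modulo $\n_\c$ to $\frac{\partial}{\partial\theta}$, so that $\D_{\bos}$ is carried isomorphically onto the odd summand $L^\vee$. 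Consequently $(T_{\c/S}/\D)_{\bos}=T_{C/S}$ and $(\D^2)_{\bos}=L^\vee\otimes L^\vee$, and the bosonic truncation of the bracket isomorphism $\D^2\xrightarrow{\cong}T_{\c/S}/\D$ yields $L^\vee\otimes L^\vee\cong T_{C/S}$; dualizing produces the spin structure $\phi\colon K_{C/S}\xrightarrow{\cong}L\otimes L$. Functoriality over $S$ is again immediate.

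It then remains to see that the two composites are naturally isomorphic to the identities, which is a local check reducing to the superconformal normal form: starting from $(C,L,\phi)$ the round trip returns $C_L$ with $(C_L)_{\bos}=C$ and $\n_{C_L}/\n_{C_L}^2=L$, and bosonizing the $\D$ we built recovers $\phi$; starting from $(\c,\D)$ the round trip recovers $(\c,\D)$ on the nose. For the last sentence of the statement, the nontrivial element of $\mu_2$ acts on $(C,L,\phi)$ as $(\id_C,-\id_L)$ and fixes $\phi$; under the first functor this induces the automorphism of $C_L$ that is $+1$ on $\O_C$ and $-1$ on $L\subset\bigwedge^{\bullet}L$, i.e. exactly the canonical automorphism $\Gamma_{C_L}$, and in superconformal coordinates $\Gamma$ is $(z,\theta)\mapsto(z,-\theta)$, which sends $\frac{\partial}{\partial\theta}+\theta\frac{\partial}{\partial z}$ to its negative, so $\Gamma$ preserves $\D$ and is an automorphism of the susy curve. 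Being compatible with composition, the equivalence carries $\mu_2$ to $\langle\Gamma\rangle$.

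The one step that requires a genuine computation is the gluing in the first functor: verifying that $\phi$ is precisely the datum that makes the local superconformal derivations patch, equivalently pinning down the canonical isomorphism $T_{\c/S}/\D\cong T_{C/S}$ used in the reverse direction. Everything else only uses that bosonic truncation is right exact, commutes with fibre products, and does not change underlying topological spaces.
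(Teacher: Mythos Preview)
Your proof is correct and follows essentially the same route as the paper: build $\c=C_L$ with $\D$ defined locally in superconformal coordinates from the spin datum, and in the reverse direction use the splitting of a $1|1$ curve over a bosonic base together with $\D_{\bos}\cong L^{\vee}$ and the bracket isomorphism to recover $\phi$. You spell out more carefully than the paper does the gluing of the local distributions, the round-trip identifications, and the verification that $\Gamma$ preserves $\D$, but the argument is the same.
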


\begin{proof}
First, we associate a susy curve to a spin curve. Given a spin curve $(C,L,\phi)$, we define $\c$ to be the split manifold $C_L$. To define locally $\D$, pick a local trivialization $\theta$ of $L$ and a local co-ordinate $z$ such that $\phi(dz)=\theta\otimes \theta$. Now we let $\D$ to be the span of $\frac{\partial}{\partial\theta}+\theta \frac{\partial}{\partial z}$. It is standard to check that this definition makes sense globally and satisfies all the requested properties.

We now do the converse, associating a spin curve to a susy curve. Let $\c$ be a susy curve over an even base $S$. The total space $\c$ has odd dimension equal to one, so it is split, and we can write $\c=C_L$; we thus have a curve $C$ and a line bundle $L$, and to give a spin curve we just need to construct the isomorphism $\phi$. Pick conformal co-ordinates and let  $v=\frac{\partial}{\partial\theta}+\theta \frac{\partial}{\partial z}$ be a generator of $\D$. With this local description, we can see that $\D_{\bos}=L^{-1}$, and $[v,v]=\frac{\partial}{\partial z}$. The bracket of vector fields thus gives the requested isomorphism
$$
\phi^{\vee} \colon L^{-1}\otimes L^{-1} =\D_{\bos}\otimes\D_{\bos}\to T_{\pi}
$$ 

The statement about the automorphism follows because the non trivial element of $\mu_2$ acts as $-1$ on $\D$. This constructions also identifies morphisms of susy curves with morphisms of spin curves.
\end{proof}

In the following, we will denote by $C_L$ the susy curve associated to the spin curve $(C,L,\phi)$ over a complex space $S$, according to Proposition \ref{susy_spin}.

\subsection{The isomorphism functor}\label{S:automorphism}

Given two susy curves $\cC$ and $\cC'$ over a complex superspace $S$, consider the controvariant functor (called the \emph{isomorphism functor} between $\c$ and $\c'$) 
$$\un{\Isom}_S(\cC,\cC'):\Hom(-,S)\to {\rm Sets}$$
that associates to any morphism $T\to S$ the set   $\un \Isom_S(\c, \c')(T)$ consisting of all $T$-isomorphisms from $\c_T:=\c\times_S T$ to $\c'_T:=\c'\times_S T$. 


\begin{proposition}\label{prop:auto}
Given two susy curves $\cC$ and $\cC'$ of genus $g\geq 2$ over a complex superspace $S$, the isomorphism functor $\un{\Isom}_S(\cC,\cC')$ is represented by an  $S$-complex superspace $\phi: \Isom_S(\cC,\cC')\to S$ having the property that $\phi$ is finite and with injective differential at any point.   
\end{proposition}
\begin{proof}

First, we show that $\un{\Isom}_S(\cC,\cC')$ is represented by a  complex superspace $\phi: \Isom_S(\cC,\cC')\to S$. 
We argue similarly to the classical case. Any isomorphism of susy curves preserves the Berenzinian line bundle. By the main result of \cite{Giulio}, the fifth (or higher) power of  this line bundle is very ample, and the dimension of the space of sections depends only on the genus of the susy curves. We conclude that $\un{\Isom}_S(\cC,\cC')$ is represented by a 
 closed subspace of $\Isom_S(E,E')$, where $E$ (resp. $E'$) is the locally free sheaf  on $S$ defined as the push-forward of the fifth power of the Berenzinian line bundle of $\c$ (resp. $\c'$). 

Let us now show that $\phi$ is finite. By definition, this is equivalent to showing that $\phi_{\bos}:\Isom_S(\cC,\cC')_{\bos}\to S_{\bos}$ is finite. 
Consider the split models $\c_{\spli}:=\c\times_S S_{\bos}\to S_{\bos}$ and $\c'_{\spli}:=\c'\times_S S_{\bos}\to S_{\bos}$ of, respectively, $\c\to S$ and $\c'\to S$.
Consider the fibered product $\Isom_S(\c,\c')\times_S S_{\bos}$ with its two projections $p_1: \Isom_S(\c,\c')\times_S S_{\bos}\to  \Isom_S(\c,\c')$ and $p_2:\Isom_S(\c,\c')\times_S S_{\bos}\to S_{\bos}$. 
By construction, $\Isom_S(\c,\c')\times_S S_{\bos}$ represents the isomorphism functor from $\c_{\spli}$ to $\c'_{\spli}$ and also, by Proposition  \ref{susy_spin}, the isomorphism functor between the associated spin curves over $S_{\bos}$. Since the stack $\S_g$ of spin curves is separated, we conclude that  the second projection $p_2$ is finite. 
Observe also that the first projection $p_1$ is a closed embedding since it is the base change of the closed embedding $S_{\bos}\hookrightarrow S$. 

Now the natural inclusion map $\iota: \Isom_S(\c,\c')_{\bos} \to \Isom_S(\c,\c')$ together with $\phi_{\bos}$   determine a map $\rho=(\iota,\phi_{\bos}): \Isom_S(\c,\c')_{\bos}\to \Isom_S(\c,\c')\times_S S_{\bos}$ such that $\phi_{\bos}=p_2\circ \rho$ and $\iota=p_1\circ \rho$.
Since $\iota$ and $p_1$ are closed embeddings, also $\rho$ is a closed embedding. Hence we conclude that $\phi_{\bos}=p_2\circ \rho$ is finite because $p_2$ is finite and $\rho$ is a closed embedding.


Finally, we  have show that the kernel of $d\phi_x$ is trivial for any complex point $x\in  \Isom_S(\c,\c')$. This kernel consists of global vector fields on the susy curve $\c_{\phi(x)}\cong \c'_{\phi(x)}$ which commute with the susy structure.  But the only such global vector field is the zero vector field as proved in \cite[Prop. 2.2]{LeBrun}. 




\end{proof}

\subsection{Kuranishi family}\label{S:Kuranishi}

We introduce the notion of deformation and Kuranishi family. A pointed complex superspace $(S,0)$ is a complex superspace $S$ together with a marked point $0$.

\begin{definition}[Deformation] Let $C_L$ be a susy curve over a point, and $(S,0)$ a pointed complex superspace . A deformation of $C_L$ over $(S,0)$ is a susy curve $\c$ over $S$ and an isomorphism between $C_L$ and the fibre of $\c$ over $0$.
\end{definition}

\begin{definition}[Kuranishi family]
Let $C_L$ be a susy curve over a point. A Kuranishi family for $C_L$ is a deformation of $C_L$ over a base $(U,0)$ such that for any other deformation $\c'$ of $C_L$ over a base $(S,0)$ there exists, possibly up to shrinking $S$, a unique morphism from $(S,0)$ to $(U,0)$ such that $\c'$ is the pull-back of $\c$.
\end{definition}

For a general introduction to Kuranishi families the reader can look at \cite[Section 11.4]{ACG}. In this set up, combining the existence of Kuranishi families for spin curves with \cite[Thm. 2.8]{LeBrun} or \cite[3.4.5]{Vaintrob}, we have the following theorem

\begin{theorem}[Existence of Kuranishi family]\label{thm:exist_kur}
Let $C_L$ be a susy curve of genus $g\geq 2$ over a point. Let $\pi:(\ov C, \L)\to (V,0)$ be a Kuranishi family for the spin curve $(C,L)$, where $V$ is complex manifold of dimension $3g-3$. Denote by $(U,0)$ the split complex supermanifold of dimension $3g-3|2g-2$ associated to $(V,0)$ and the locally free sheaf  $R^1\pi_*L$. Then, $(U,0)$ is the base of a Kuranishi family $\c$ for the susy curve $C_L$. 
\end{theorem}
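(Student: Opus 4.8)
The plan is to construct the Kuranishi family over $(U,0)$ as an infinitesimal deformation built up order by order along the filtration $U^{(1)}\subset U^{(2)}\subset\cdots\subset U^{(k)}=U$ (for $k\gg 0$), and to verify the universal property at each stage. The base $(U,0)=(V,0)_{R^1\pi_*L}$ is split, so $U^{(1)}=U_{\spli}$ is the first-order neighbourhood of $V$ whose structure sheaf is $\O_V\oplus (R^1\pi_*L)$ (placed in odd degree). The starting point at order $0$ is the Kuranishi family $\pi\colon(\ov C,\L)\to (V,0)$ of the spin curve $(C,L)$, which by Proposition~\ref{susy_spin} is a susy curve $\ov C_\L$ over the (bosonic) base $V$; restricting it over $0$ recovers $C_L$. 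By the cited results \cite[Thm.~2.8]{LeBrun} or \cite[3.4.5]{Vaintrob}, the obstruction-theoretic input for deforming a susy curve over a base with nilpotent ideal $\n_S$ is governed by the cohomology of the sheaf $T_{\c/S}/\D\cong\D^2\cong \sT$ on the central fibre; more precisely the deformations of a susy curve over an infinitesimal extension $S'\supset S$ with square-zero ideal $I$ are a torsor under $H^1(C,\sT)\otimes I$ with obstruction in $H^2(C,\sT)=0$ (since $C$ is a curve). The key point is that $H^1(C,\sT)=H^1(C,L^{-1})$, which by Serre duality is dual to $H^0(C,K_C\otimes L)=H^0(C,L\otimes L\otimes L^{-1}\otimes\cdots)$—concretely $H^1(C,L^{-1})\cong H^0(C,L)^\vee$ using $K_C\cong L^{\otimes 2}$, and its even/odd decomposition splits the tangent space of $U$ at $0$ as $H^1(C,TC)\,|\,H^1(C,L^{-1})$, matching $\dim U=3g-3|2g-2$ since $h^1(L^{-1})=h^0(L^{\otimes 2}\otimes L^{-1})$... \emph{wait}, the relevant count is $h^1(C,L^{-1})=g-1+h^0(C,L^{-1})$; for generic $L$ this is $g-1$, but the moduli count $2g-2$ comes from $H^1(C,\sT)$ with $\sT$ of degree $2g-2-(g-1)=g-1$, so one must be careful and instead read the odd dimension off the locally free sheaf $R^1\pi_*L$ whose rank is $h^1(C,L)=g-1$—so in fact $\dim U = 3g-3\,|\,2(g-1)=3g-3\,|\,2g-2$ requires the odd part to have rank $2g-2$, hence the relevant sheaf is $R^1\pi_*L$ together with its "shift", i.e. one takes both $R^1\pi_*L$ and the contribution making the total $2g-2$; the safest route is simply to accept the statement's identification of the base and focus on the deformation-theoretic bookkeeping.

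Concretely I would proceed as follows. \textbf{Step 1 (setup and reduction).} Reduce to the case where $V$ is Stein/quasi-affine, so that all the relevant higher cohomology along $\pi$ is computed by global sections and the sheaves $R^i\pi_*(-)$ behave well; this is the analogue of the reduction made in the proof of Lemma~\ref{lem:Lie_aut}. \textbf{Step 2 (construction over each infinitesimal neighbourhood).} By induction on $k$, construct a susy curve $\c_k\to U^{(k)}$ deforming $C_L$, together with compatible isomorphisms $\c_k\times_{U^{(k)}}U^{(k-1)}\cong\c_{k-1}$. The passage from $\c_{k-1}$ to $\c_k$ is controlled by the square-zero extension $U^{(k-1)}\hookrightarrow U^{(k)}$ with ideal $I_k=\n_U^k/\n_U^{k+1}$; the obstruction to extending lies in $H^2(C,\sT)\otimes(\text{sections of }I_k)=0$, so an extension exists, and the set of extensions is a torsor under $H^1(C,\sT)\otimes(\text{sections of }I_k)$. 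Here one invokes precisely \cite[Thm.~2.8]{LeBrun} / \cite[3.4.5]{Vaintrob} for the deformation theory of susy structures. \textbf{Step 3 (pinning down the right family via the Kuranishi property of $V$).} The ambiguity at each stage must be eliminated so that the resulting family is \emph{universal}, not merely versal. The even directions are already rigidified because $(\ov C,\L)\to V$ is Kuranishi for spin curves; the odd directions are rigidified by the identification of the odd part of the tangent space at $0$ with $R^1\pi_*L|_0 = H^1(C,L)$, which is exactly the fibre of the locally free sheaf used to build the odd part of $U$. One checks that the Kodaira–Spencer map $T_0 U \to H^1(C,TC)\,|\,H^1(C,L^{-1})\cong H^1(C,\sT)_{\bos}\oplus(\text{odd part})$ is an isomorphism; combined with the vanishing $H^2(C,\sT)=0$ and the (formal) smoothness of the deformation functor this gives that $\c:=\c_k$ (for $k\gg 0$, using $U^{(k)}=U$) represents the deformation functor, i.e. is a Kuranishi family. \textbf{Step 4 (verifying the universal property directly).} Given any deformation $\c'$ of $C_L$ over a pointed base $(S,0)$, produce the classifying map $(S,0)\to(U,0)$ by: first, restricting $\c'$ to $S_{\bos}$ gives (via Proposition~\ref{susy_spin}) a deformation of the spin curve $(C,L)$, hence a unique map $S_{\bos}\to V$ by the Kuranishi property of $V$; second, extend this map over the infinitesimal neighbourhoods $S^{(j)}$ of $S_{\bos}$ inductively, where the existence and uniqueness of the extension at each step is precisely the torsor-plus-vanishing-obstruction statement of Step~2 applied in the form: maps $S^{(j)}\to U$ lifting a given $S^{(j-1)}\to U$ and classifying $\c'|_{S^{(j)}}$ form a torsor under $H^0(S_{\bos}, \text{(something)}\otimes I_j)$ which the deformation-theoretic identification forces to be a singleton. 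Uniqueness of the whole map then follows because it is determined on every $S^{(j)}$ and $S=S^{(j)}$ for $j\gg 0$.

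The main obstacle I anticipate is \textbf{Step 3/Step 4}: getting genuine \emph{uniqueness} of the classifying map rather than mere existence. The subtlety is that at each infinitesimal stage the extensions form a torsor under a nonzero group $H^1(C,\sT)\otimes I_j$, so a priori there are many lifts; uniqueness must come from insisting that the lift be \emph{compatible with the given isomorphism $\c'|_{S^{(j)}}\cong(\text{pullback of }\c)$}, i.e. one is not classifying deformations up to isomorphism but rigidified deformations, and one needs that $C_L$ (equivalently the spin curve) has no infinitesimal automorphisms — this is exactly Lemma~\ref{lem:Lie_aut}, whose role is to kill the automorphism ambiguity that would otherwise obstruct uniqueness. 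So the argument crucially chains together: (a) Kuranishi for spin curves (bosonic input), (b) the cohomological deformation theory of susy structures from \cite{LeBrun}, \cite{Vaintrob} (the fibre-by-fibre obstruction calculus), and (c) the triviality of the infinitesimal automorphism group from Lemma~\ref{lem:Lie_aut} (rigidification). The remaining verifications — that the Kodaira–Spencer map is the claimed isomorphism, and the bookkeeping of even/odd ranks matching $3g-3|2g-2$ — are routine given the explicit split structure of $U$ and the standard identification $T_{C_L}=TC\,|\,L^{-1}$ recorded in the excerpt.
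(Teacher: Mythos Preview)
The paper gives no proof of this theorem: the sentence preceding the statement simply says ``combining the existence of Kuranishi families for spin curves with \cite[Thm.~2.8]{LeBrun} or \cite[3.4.5]{Vaintrob}, we have the following theorem'', and nothing further is offered. So you are not being compared against an argument in the paper but against those references. Your outline---unobstructed deformation theory governed by $H^1$ of the sheaf $\D^2$ on the central fibre, induction along the nilpotent filtration $U^{(1)}\subset U^{(2)}\subset\cdots$ of the base, and rigidification via the vanishing of infinitesimal automorphisms (Lemma~\ref{lem:Lie_aut})---is exactly the shape of the argument in LeBrun--Rothstein and Va\u{\i}ntrob. In that sense your strategy is correct and matches what the paper invokes by citation.

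That said, the middle paragraph on dimension counts contains several errors and should be cut or rewritten cleanly. First, $T_{\c/S}/\D\cong\D^2$ is \emph{not} isomorphic to $\sT=L^{-1}$: the sheaf $\D^2$ is a rank $1|0$ line bundle on $C_L$, and as an $\O_C$-module it decomposes as $T_C\oplus(T_C\otimes L)=T_C\oplus L^{-1}$ (even part $\oplus$ odd part), whence $H^1(C_L,\D^2)=H^1(C,T_C)\,|\,H^1(C,L^{-1})$ already has the correct dimension $(3g-3)\,|\,(2g-2)$. Second, Serre duality gives $H^1(C,L^{-1})^\vee\cong H^0(C,K_C\otimes L)=H^0(C,L^{\otimes 3})$, not $H^0(C,L)$; and Riemann--Roch gives $h^1(L^{-1})=h^0(L^{-1})+2g-2=2g-2$, not your $g-1+h^0(L^{-1})$. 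Part of your confusion may stem from the paper itself: the sheaf written ``$R^1\pi_*L$'' in the statement looks like a typo, since its generic fibre $H^1(C,L)$ vanishes on the even component and so cannot be locally free of rank $2g-2$; the intended sheaf is presumably $R^1\pi_*\L^{-1}$ (or its relative Serre dual $\pi_*(K_{\ov C/V}\otimes\L)$, depending on which convention for $M_E$ one tracks). Your instinct to ``accept the statement's identification of the base'' and press on with the deformation theory was the right call; just do not leave the muddled arithmetic in a final write-up.
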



The following result on Kuranishi families will be crucial for the next section. 

\begin{proposition}\label{prop:iso}
Let $\pi\colon \c \to (U,0)$ be a Kuranishi family for the susy curve $C_L$ over $\C$. Consider the two susy curves $p_i^*\c\to U\times U$ where $p_i:U\times U\to U$ is the $i$-th projection, for $i=1,2$. 
Then the morphism 
$$
\psi: \Isom_{U\times U}(p_1^*\c,p_2^*\c)\xrightarrow{\phi} U\times U \xrightarrow{p_1} U
$$
is \'etale and surjective. 
\end{proposition}
\begin{proof}

Let $\Delta$ denote the diagonal of $U\times U$. Since $p_1^*\c_{|\Delta}= p_2^*\c_{|\Delta}$, the image of $\phi$ contains the diagonal $\Delta$. Since $p_1$ maps $\Delta$ isomorphically into $U$, we deduce that $\psi$ is surjective.


Let $x$ be  point of $\Isom_{U\times U}(p_1^*(\c),p_2^*(\c))$ and set $(u_1,u_1):=\phi(x)\in U\times U$. To prove that $\psi$ is \'{e}tale at $x$, it is enough to show that $\psi$ is locally an isomorphism around $x$. Since $\c\to U$ is a Kuranishi family at $u_1$ and $u_2$ (by the  openness of versality), 
we can find open neighbourhoods $u_1\in V_1\subseteq U$ and $u_2\in V_2\subseteq U$  such that there exists a unique pair of isomorphisms $f:V_1\to V_2$ and $F:\c_{|U_1}\to f^*(\c_{|U_2})$. Then the pair $(f,F)$ defines the inverse of $\psi$ from the open neighbourhood $V_1\subseteq U$ of $\psi(x)=u_1$ to an open  neighbourhood of $x$ in $\Isom_{U\times U}(p_1^*(\c),p_2^*(\c))$. 


\end{proof}

\begin{section}{The moduli superstack and superspace of susy curves}\label{sec:moduli}

We are now ready to study the moduli superstack of susy curves, using the language introduced in Section \ref{sec:superstack}. 

\begin{definition}[Moduli CFG of genus $g$ susy curves]
Let $g\geq 2$. The category fibered in groupoids over $\S$ of genus $g$ susy curves is the CFG whose objects are  
susy curves $\pi \colon \c \to S$ over some complex superspace  $S$, whose morphisms are Cartesian diagrams, and whose fibration is  the forgetful functor  $F(\pi \colon \c \to S)=S$.
\end{definition}

Proposition  \ref{susy_spin} implies that the bosonic truncation $(\sM_g)_{\bos}$ of $\sM_g$ (as in \eqref{E:funct-bos}) is the CFG  over $\S_{\ev}$  of genus $g$ spin curves, denoted by $\S_g$, whose objects are spin curves $(C,L,\phi)\to S$ over some complex space $S\in \S_{\ev}$, whose morphisms are Cartesian diagrams and whose fibration is the forgetful functor $F((C,L,\phi)\to S)=S$.
We are going to use the following well-known properties of $\S_g$.

\begin{fact}\label{F:Sg}
Let $g\geq 2$.
\begin{enumerate}[(i)]
\item \label{F:Sg1} $\S_g$ is a smooth and separated DM complex stack of dimension $3g-3$. $\S_g$ has two connected components: $\S_g^+$  parametrizes respectively even spin curves, i.e.  spin curves $(C,L,\phi)$ over $\C$ such that $h^0(L)$ is even; 
$\S_g^-$  parametrizes respectively odd spin curves, i.e. spin curves $(C,L,\phi)$ over $\C$ such that $h^0(L)$ is odd. 
\item \label{F:Sg2} $\S_g$  has a coarse moduli space $S_g$ which is a quasi-projective variety and it does have two connected components $S_g^+$ and $S_g^-$ which are the coarse moduli spaces of $\S_g^+$ and $\S_g^-$, respectively. 
\end{enumerate}
\end{fact}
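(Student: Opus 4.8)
The plan is to deduce everything from the forgetful morphism $\sigma\colon\S_g\to\M_g$ to the moduli stack of smooth genus $g$ curves, which is a smooth, separated DM complex stack of dimension $3g-3$ by Deligne--Mumford. First I would analyse $\sigma$ étale-locally on the target: choosing a smooth atlas, i.e. a curve $C/B$ with $B$ a complex manifold, the fibre product $\S_g\times_{\M_g}B$ is the moduli of spin structures on the fixed family $C/B$. Since two isomorphisms $\phi,\phi'\colon K_{C/B}\xrightarrow{\sim}L^{\otimes 2}$ differ by an automorphism of $L$, i.e. by a unit, which over $\C$ is always a square, the datum of $\phi$ amounts to a rigidification, and $\S_g\times_{\M_g}B$ is a $\mu_2$-gerbe over $\mathcal{T}$, where $\mathcal{T}\to B$ is the relative scheme of theta characteristics --- the preimage of the section $K_{C/B}$ under $[2]\colon\operatorname{Pic}^{g-1}_{C/B}\to\operatorname{Pic}^{2g-2}_{C/B}$, a finite étale $B$-scheme of degree $2^{2g}$ --- and the $\mu_2$ records the automorphism of $L$ by $-1$. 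As $\mathcal{T}$ is smooth of relative dimension $0$ over $B$ and $B\mu_2$ is smooth and $0$-dimensional, $\sigma$ is proper, quasi-finite and smooth of relative dimension $0$ with finite inertia; hence $\S_g$ is a smooth, separated DM complex stack of dimension $3g-3$, which is the content of the first two sentences of \eqref{F:Sg1}.

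For the decomposition I would invoke the classical fact, due to Mumford (see also Atiyah), that the parity $h^0(C,L)\bmod 2$ of a theta characteristic is locally constant in families of spin curves, which yields an open and closed decomposition $\S_g=\S_g^+\sqcup\S_g^-$; both pieces are non-empty since a genus $g$ curve carries $2^{g-1}(2^g+1)$ even and $2^{g-1}(2^g-1)$ odd theta characteristics. The main obstacle --- and the only step I do not expect to be formal --- is the \emph{connectedness} of each $\S_g^{\pm}$. Since $\M_g$ is connected and $\S_g^{\pm}\to\M_g$ is a $\mu_2$-gerbe over a finite étale cover with connected gerbe fibres, connectedness of $\S_g^{\pm}$ is equivalent to transitivity of the monodromy action of $\pi_1(\M_g)$ (the mapping class group) on the theta characteristics of $C$ of fixed parity. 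This in turn follows from surjectivity of the monodromy onto $\operatorname{Sp}(2g,\mathbb{F}_2)$ acting on $H^1(C,\mathbb{F}_2)=\Jac(C)[2]$ with its Weil pairing, together with the standard fact that $\operatorname{Sp}(2g,\mathbb{F}_2)$ acts transitively on the quadratic refinements of the symplectic form with a fixed Arf invariant --- the Arf invariant being exactly the parity, again by Mumford's theorem; compare \cite[Chap. XII]{ACG}.

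Finally, for \eqref{F:Sg2}: since $\S_g$ is a separated DM complex stack of finite type, the Keel--Mori theorem \cite{KM} produces a coarse moduli space $S_g$, and coarse-space formation is compatible with the clopen decomposition, so $S_g=S_g^+\sqcup S_g^-$ with $S_g^{\pm}$ coarse for $\S_g^{\pm}$. For quasi-projectivity I would note that the induced morphism of coarse spaces $S_g\to M_g$ is proper and quasi-finite, hence finite, where $M_g$ is the quasi-projective coarse moduli space of $\M_g$ built by Mumford via geometric invariant theory; a variety finite over a quasi-projective variety is quasi-projective. Alternatively, one can realise $S_g$ directly as a GIT quotient of a locally closed subscheme of a Hilbert scheme of pluricanonically embedded spin curves equipped with a level structure (mimicking \cite[Chap. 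XII.5]{ACG}), which also gives quasi-projectivity. I do not anticipate any difficulty beyond assembling these standard inputs.
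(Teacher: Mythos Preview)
The paper does not prove this statement: it is recorded as a ``Fact'' and introduced with the sentence ``We are going to use the following well-known properties of $\S_g$'', with no argument given. Your proposal therefore cannot be compared to a proof in the paper, because there is none; rather, you are supplying what the paper takes for granted.

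That said, your outline is correct and is the standard route to these results (essentially the argument of Cornalba and of Mumford, as presented e.g.\ in \cite[Chap.~XII]{ACG}). The analysis of the forgetful morphism $\sigma\colon\S_g\to\M_g$ as, \'etale-locally, a $\mu_2$-gerbe over the finite \'etale scheme of theta characteristics is right, and it immediately gives smoothness, separatedness, the DM property, and the dimension. You correctly single out the only genuinely non-formal step, namely the connectedness of $\S_g^{\pm}$, and your reduction to the transitivity of $\mathrm{Sp}(2g,\mathbb{F}_2)$ on quadratic forms of fixed Arf invariant is the right one. The deduction of \eqref{F:Sg2} via Keel--Mori and finiteness over the quasi-projective variety $M_g$ is also standard and correct. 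One small point of phrasing: when you say a unit ``over $\C$ is always a square'', this is literally true only \'etale-locally (or analytically-locally) on the base; but that is all you need, since you are establishing an \'etale-local description of $\sigma$.
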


In the next theorem, we show that $\sM_g$ is a complex superstack and we collect its geometric properties.

\begin{theorem}\label{T:sS-superstack}
For any $g\geq 2$, $\sM_g$ is a smooth and separated DM complex superstack of dimension $3g-3|2g-2$ such that $(\sM_g)_{\bos}=\S_g$. Moreover, $\sM_g$ has two connected components, denoted by $\sM_g^+$ and $\sM_g^-$, whose bosonic truncations are $(\sM_g)_{\bos}^+=\S_g^+$ and $(\sM_g)_{\bos}^-=\S_g^-$.
\end{theorem}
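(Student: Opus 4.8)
The plan is to verify the two conditions of Definition~\ref{def:DMstack} for $\sM_g$ and then bootstrap the geometric properties (smoothness, dimension, separatedness, connected components) from the local Kuranishi description together with what is already known for $\S_g$. First I would check that $\sM_g$ is a superstack: descent for susy curves is \'etale-local and follows from faithfully flat descent of complex superspaces together with the fact that the susy datum $\D\subset T_{\c/S}$ is a subsheaf, hence glues; the isomorphism presheaf $\Isom_S(\c,\c')$ is representable by a finite complex superspace over $S$ by Proposition~\ref{prop:iso}, and this finite morphism is separated, so by Remark~\ref{scorciatoia} both condition~\eqref{stack2} of Definition~\ref{D:superstack} and condition~\eqref{def:DMstack1} of Definition~\ref{def:DMstack} hold, giving a representable separated (indeed finite, hence proper) diagonal. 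This already yields that $\sM_g$ is separated once we know it is a DM complex superstack.

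Next I would produce an \'etale atlas. For each isomorphism class of susy curve $C_L$ over a point, Theorem~\ref{thm:exist_kur} gives a Kuranishi family $\c^{(i)}\to (U_i,0)$ with $U_i$ a split complex supermanifold of dimension $3g-3|2g-2$; the universal property of the Kuranishi family shows that the induced morphism $\un{U_i}\to\sM_g$ is \'etale (the fiber product with any $\un S\to\sM_g$ is, locally on $S$, a graph of the classifying map, hence \'etale). Since $\S_g$ is of finite type one can, mimicking Arbarello--Cornalba \cite{AC} (see also \cite[Chap.~XII.4]{ACG}), choose finitely many such Kuranishi families whose images cover $\sM_g$, and take $X=\coprod_i U_i$; then $\un X\to\sM_g$ is a surjective \'etale atlas, so $\sM_g$ is a DM complex superstack. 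Because each $U_i$ is a complex supermanifold of dimension $3g-3|2g-2$, the atlas is smooth of that dimension, so $\sM_g$ is smooth of dimension $3g-3|2g-2$. Combined with the proper diagonal from the previous paragraph, $\sM_g$ is smooth and separated.

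Then I would identify the bosonic truncation. By \eqref{E:funct-bos} the fiber of $(\sM_g)_{\bos}$ over a complex space $S$ is $\sM_g(i(S))$, i.e.\ susy curves over the ordinary complex space $S$; by Proposition~\ref{susy_spin} this category is equivalent, naturally in $S$, to the category of spin curves over $S$, so $(\sM_g)_{\bos}\cong\S_g$ as CFG over $\S_{\ev}$ (that this equivalence is compatible with the fibrations is exactly what Proposition~\ref{susy_spin} provides, since it identifies morphisms of susy curves with morphisms of spin curves). Alternatively, the same identification follows from the atlas: $(\sM_g)_{\bos}$ has atlas $\coprod (U_i)_{\bos}=\coprod V_i$, the Kuranishi bases of the spin curves, recovering the groupoid presentation of $\S_g$. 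Compatibility is ensured by Proposition~\ref{P:i-bosDM}\eqref{P:i-bosDM2}.

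Finally, for the connected components: since $\n_X/\n_X^2$ is locally free on a complex supermanifold, $\sM_g$ and $(\sM_g)_{\bos}$ have the same underlying topological space, hence the same connected components; by Fact~\ref{F:Sg}\eqref{F:Sg1} there are exactly two, corresponding to the parity of $h^0(L)$, and this parity is the parity of $h^0$ of the line bundle $L=\n_{\c_{\spli}}/\n_{\c_{\spli}}^2$ attached to the split model of the susy curve, which is locally constant in families; call these $\sM_g^+$ and $\sM_g^-$, and their bosonic truncations are then $\S_g^+$ and $\S_g^-$ by the identification just made. The main obstacle I expect is the verification that the morphism $\un{U_i}\to\sM_g$ coming from a Kuranishi family is genuinely \'etale (not merely formally so) and that finitely many such charts suffice to cover $\sM_g$ --- this is where one must adapt carefully the Arbarello--Cornalba argument to the super setting, using that $\Isom$ is finite (Proposition~\ref{prop:iso}) to control the overlaps, and using the finiteness properties of $\S_g$ to reduce to a finite cover.
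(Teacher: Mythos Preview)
Your proposal is correct and follows essentially the same approach as the paper: representability and separatedness of the diagonal via Proposition~\ref{prop:iso} and Remark~\ref{scorciatoia}, effectivity of descent by gluing, the \'etale atlas from a finite union of Kuranishi families (Theorem~\ref{thm:exist_kur}) adapting the Arbarello--Cornalba argument, and the identification $(\sM_g)_{\bos}\cong\S_g$ via Proposition~\ref{susy_spin}. The only noteworthy difference is that you derive separatedness directly from the finiteness (hence properness) of the diagonal, whereas the paper instead deduces it from the separatedness of $(\sM_g)_{\bos}=\S_g$ together with the fact that properness is a property of the underlying topological spaces; your route is slightly more direct.
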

\begin{proof}

Let us prove that $\sM_g$ is a complex superstack by checking the conditions of Definition \ref{D:superstack} and  Definition \ref{def:DMstack}.

Condition \eqref{stack2} of Definition \ref{D:superstack} and condition \eqref{def:DMstack1} of Definition \ref{def:DMstack} follow from Remark \ref{scorciatoia} and Proposition \ref{prop:auto}.


Let us now show that condition  \eqref{stack1} of Definition \ref{D:superstack} holds true for $\sM_g$, i.e. that descent data for susy curves are effective.
We are going to use the notation introduced in the definition of descend data. We first pull-back everything to $T_{\bos}$ and $S_{\bos}$. Here, we are working on a spin curve, and we already know that descend data are effective, so we obtain a susy curve $\c_{\spli}$ over $S_{\bos}$. To extend it to a susy curve over $S$, we need to construct just the structure sheaf and the susy structure, because the topological space is already there. The descent data from $\c'$  give us a open cover of $\c$, and a sheaf and the susy structure on each open set. We use the descent data to glue together these sheaves and get the structure sheaf $\O_{\c}$ and the susy structure.

Let us finally show that condition \eqref{def:DMstack2} of Definition \ref{def:DMstack} holds true for $\sM_g$, i.e. that $\sM_g$ admits an \'etale atlas. 
 To this end we use Kuranishi families, whose existence has been proved in Theorem \ref{thm:exist_kur}. Since $\S_g$ can be covered by a finite disjoint union of Kuranishi families of spin curves, and $(\sM_g)_{\bos}=\S_g$, taking a suitable disjoint union $X$ of a finite number Kuranishi families of susy curve, we obtain a complex superspace $X$ with a map towards $\sM_g$ such that $X_{\bos}$ is an atlas for $\S_g$. Now, the fact that the map $X\to \sM_g$, which is  surjective because the map  $X_{\bos}\to \S_g$ is surjective, is  \'{e}tale follows  from Proposition \ref{prop:iso} as in the proof of \cite[Thm. XII.8.3]{ACG}.

Since the atlas $X$ is smooth of dimension $3g-3|2g-2$ by Theorem \ref{thm:exist_kur}, we get that  $\sM_g$ is smooth of dimension $3g-3|2g-2$. Moreover, since $(\sM_g)_{\bos}=\S_g$ is separated and it has two connected components $\S_g^+$
and $\S_g^-$ by Fact \ref{F:Sg}, we deduce that  $\sM_g$ is separated and it has two connected components $\sM_g^+$ and $\sM_g^-$ whose bosonic truncations are $\S_g^+$ and $\S_g^-$, respectively. 
\end{proof}

Recall that the moduli  stack $\S_g$ of spin curves is a $\mu_2$-gerbe. This means that for every object  $\c\to S$, we have an injection of $\mu_2$ in the group $\Aut_S(\c)$ of automorphisms of $\c$ over $S$,
and this is compatible with base change. In this set up, we can construct the rigidification, which is a DM complex stack denoted by $\S_g\fatslash\mu_2$, and the  projection $\pi\colon \S_2\to \S_g\fatslash  \mu_2$ is a $\mu_2$-gerbe
(see \cite{Rom}).

In the case of the moduli superstack $\sM_g$ of susy curves, for every object $\c\to S$, the canonical involution $\Gamma_{\c}$ gives an embedding of $\mu_2$ in the group $\Aut(\c)$ of automorphisms of $\c$, and this is compatible with base change. However, the forgetful functor maps $\Gamma_{\c}$ to $\Gamma_S$, which, in general, is not equal to $Id_S$; hence, $\Gamma_{\c}$ does not belong in general to $\Aut_S(\c)$, so that $\sM_g$ is not a $\mu_2$ gerbe. 
The existence of the automorphism $\Gamma_{\c}\in \Aut(\c)$ can be rephrased by saying that, given a family of susy curves $\c\to S$, which is equivalent to giving a map from $S$ to $\sM_g$, we have an embedding $\gamma=Id_S\times \Gamma_S$ of $S$ into $S \times_{\sM_g} S=\Isom_{S\times S}(p_1^*\c,p_2^*\c)$, see also \cite[Chap. XII, Equ. (8.5)]{ACG}. This ``gerbe-like" structure, which, in some broad sense, could be typical of all moduli superstacks of super objects, implies the following proposition. 
\begin{proposition}\label{prop:fact}
Let $M$ be a complex superspace, any map $\phi\colon \sM_g\to M$ factors trough the quotient $\sM_g/\Gamma$.
\end{proposition}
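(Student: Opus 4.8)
The plan is to use the canonical automorphism $\Gamma$ on the universal susy curve, together with the description (Theorem \ref{T:DMtrunc}) of $\sM_g/\Gamma$ as $[X_1/\Gamma\rightrightarrows X_0/\Gamma]$ for any \'etale groupoid presentation $\sM_g=[X_1\rightrightarrows X_0]$. Fix an \'etale atlas $a\colon X_0\to\sM_g$ — e.g. a finite disjoint union of Kuranishi families (Theorem \ref{thm:exist_kur}, Theorem \ref{T:sS-superstack}) — and set $X_1:=X_0\times_{\sM_g}X_0$, with source and target $s,t\colon X_1\to X_0$. Since $\un M$ is an \'etale stack (Lemma \ref{L:repres}), the morphism $\phi$ is the same datum as a morphism $f:=\phi\circ a\colon X_0\to M$ satisfying $f\circ s=f\circ t$. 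Writing $q_0\colon X_0\to X_0/\Gamma$, $q_1\colon X_1\to X_1/\Gamma$ for the quotient maps and $s/\Gamma,t/\Gamma\colon X_1/\Gamma\to X_0/\Gamma$ for the induced source and target, it suffices to produce $\bar f\colon X_0/\Gamma\to M$ with $f=\bar f\circ q_0$ and $\bar f\circ(s/\Gamma)=\bar f\circ(t/\Gamma)$: such an $\bar f$ then defines the desired factorization $\bar\phi\colon\sM_g/\Gamma\to\un M$ of $\phi$ through $q\colon\sM_g\to\sM_g/\Gamma$.

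The key input is that the atlas map $f$ is $\Gamma$-invariant, i.e. $f=f\circ\Gamma_{X_0}$. Indeed, $a$ classifies a susy curve $\c_0\to X_0$, and its canonical automorphism $\Gamma_{\c_0}\colon\c_0\to\c_0$ is a morphism over $\Gamma_{X_0}$ that preserves the distribution $\D$ (a one-line check in superconformal coordinates: $\Gamma_*$ sends the local generator $\partial_\theta+\theta\partial_z$ to its negative). Hence $\Gamma_{\c_0}$ is an isomorphism $\c_0\cong\Gamma_{X_0}^*\c_0$ of susy curves over $X_0$, i.e. a $2$-isomorphism $a\simeq a\circ\Gamma_{X_0}$ of morphisms $X_0\to\sM_g$; composing with $\phi$ and using that $\un M$ has discrete fibres (the $2$-Yoneda identification \eqref{E:fullembed}) gives the honest equality $f=f\circ\Gamma_{X_0}$ of morphisms of complex superspaces. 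On structure sheaves $\Gamma_{X_0}^\sharp$ is the sign automorphism of $\O_{X_0}$, so $f=f\circ\Gamma_{X_0}$ forces $f^\sharp$ to kill the odd part of $\O_M$, hence the whole ideal $\n_M$; equivalently $f$ factors through $M_{\bos}\hookrightarrow M$, and, $M_{\bos}$ being a complex space, through $q_0$ by the universal property of $X_0\to X_0/\Gamma$. This yields $\bar f\colon X_0/\Gamma\to M$ with $f=\bar f\circ q_0$.

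It remains to check that $\bar f$ is equalized by $s/\Gamma$ and $t/\Gamma$. By construction of the quotient functor we have $q_0\circ s=(s/\Gamma)\circ q_1$ and $q_0\circ t=(t/\Gamma)\circ q_1$, so $\bar f\circ(s/\Gamma)\circ q_1=f\circ s=f\circ t=\bar f\circ(t/\Gamma)\circ q_1$; since $q_1^\sharp$ is the inclusion of the sheaf of $\Gamma$-invariants, $q_1$ is an epimorphism, and we conclude $\bar f\circ(s/\Gamma)=\bar f\circ(t/\Gamma)$. Hence $\bar f$ descends to $\bar\phi\colon\sM_g/\Gamma=[X_1/\Gamma\rightrightarrows X_0/\Gamma]\to\un M$ with $\bar\phi\circ q=\phi$, using Lemma \ref{lem:quoziente} to guarantee that $[X_1/\Gamma\rightrightarrows X_0/\Gamma]$ behaves as expected (which is legitimate because $s,t$ are \'etale). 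Alternatively, once one knows $\phi$ factors through $\un{M_{\bos}}=i(\un{M_{\bos}})\hookrightarrow\un M$ (descending from the atlas level as above, using that $\un{M_{\bos}}\to\un M$ is a monomorphism), the factorization through $q$ is immediate from the adjunction $-/\Gamma\dashv i$ of Theorem \ref{T:DMtrunc}\eqref{T:DMtrunc2}. The delicate point throughout is exactly this passage from the atlas to the stack — the descent/epimorphism bookkeeping, and in the alternative route the bicategorical handling of the adjunction; everything else is a formal consequence of the basic properties of $\Gamma$ recorded in Section \ref{sec:prel}.
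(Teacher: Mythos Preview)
Your proof is correct and follows essentially the same approach as the paper: pass to an \'etale groupoid presentation, use the canonical automorphism $\Gamma_{\c_0}$ of the universal susy curve to obtain $f=f\circ\Gamma_{X_0}$ on the atlas, deduce that $f$ factors through $M_{\bos}$ and hence through $X_0/\Gamma$, and descend. The paper packages the $2$-isomorphism $a\simeq a\circ\Gamma_{X_0}$ as the section $\gamma=\mathrm{Id}_{X_0}\times\Gamma_{X_0}\colon X_0\to X_1$ and compares it explicitly with the diagonal $\Delta$ to derive $f_0=f_0\circ\Gamma_{X_0}$, whereas you invoke the $2$-isomorphism and the discreteness of fibres of $\un M$ directly; these are the same computation in different clothing.
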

\begin{proof}
Recall that a supergroupoid presentation of $M$ is $M_{\bullet}=M\rightrightarrows M$, where both arrows are the identity. Since the pseudofunctor $\F_{\loc}$ of \eqref{E:loc-mor} is an isomorphism of bicategories, we can pick a supergroupoid presentation $X_{\bullet}=X_1\rightrightarrows X_0$ of $\sM_g$ such that $\phi$ is induced by a map of groupoids $f=(f_0,f_1)\colon X_{\bullet}\to M_{\bullet}$, i.e. $\phi=\F_{\loc}(f)$. Since $\sM_g/\Gamma=[X_1/\Gamma\rightrightarrows X_0/\Gamma]$ by definition (see Theorem \ref{T:DMtrunc}\eqref{T:DMtrunc2}), it is enough to show that $f$ factors through the quotient $\pi=(\pi_0,\pi_1):X_{\bullet}\to X_{\bullet}/\Gamma$.

To this aim, consider the map  $\gamma=Id_{X_0}\times \Gamma_{X_0}\colon X_0\to X_1$ discussed above and the diagonal embedding $\Delta: X_0\to X_1=X_0\times_{\sM_g} X_0$. Since $\gamma$ and $\Delta$ are both sections of the source map 
$s:X_1\to X_0$ and both the arrows in the presentation of $M$ are the identity, we have that 
$$f_1\circ \Delta=f_0\circ s\circ \Delta = f_0\circ s\circ \gamma=f_1\circ \gamma.$$ 
Similarly, using the above equality and the relations $t\circ \Delta=Id_{X_0}$ and $t\circ \gamma=\Gamma_{X_0}$ where $t:X_1\to X_0$ is the target map, we get that 
$$f_0=f_0\circ t\circ \Delta= f_1\circ \Delta=f_1\circ \gamma=f_0\circ t \circ \gamma=f_0\circ \Gamma_{X_0}.$$ 
This shows that $f_0$ factors through the quotient $\pi_0:X_0\to X_0/\Gamma$. 
Moreover, using that $M_{\bos}\hookrightarrow M$ is universal with respect to morphisms from  spaces into $M$, we deduce that $f_0$ has to factor through the inclusion $M_{\bos}\hookrightarrow M$. Since $f_1=f_0\circ s$ (because the target morphism of $M_{\bullet}$ is the identity), this implies that also $f_1$ has to factor via the inclusion $M_{\bos}\hookrightarrow M$. Since the bosonic quotient is left adjoint to the inclusion of spaces into superspaces, we infer that $f_1$ has to factor via the quotient $\pi_1:X_1\to X_1/\Gamma$. 
 We conclude that $f$ factors trough $\pi$ as required.

\end{proof}

This result has the following key corollary. The idea underlying it is that, locally around a  point $[C]$, the coarse moduli space of curves looks like a Kuranishi family for $C$ modulo $\Aut(C)$, cf \cite[Section XII.2]{ACG}; when $C$ is a susy curve, $\Aut(C)$ contains $\Gamma$, hence the coarse moduli space is an ordinary non-reduced space.

\begin{corollary}\label{cor:coarse}
The coarse superspace $\mathbb{M}_g$ of $\sM_g$ does exist and it is even, or in other words it is an ordinary complex space. Moreover, $\mathbb{M}_g$ is also the coarse space of the bosonic quotient $\sM_g/\Gamma$.

The complex space $\mathbb{M}_g$ is not reduced; its reduction $(\mathbb{M}_g)_{\red}$ is isomorphic to the coarse moduli space $S_g$ of genus $g$ spin curves. In particular, $\mathbb{M}_g$ is separated, and it has two connected components.

\end{corollary}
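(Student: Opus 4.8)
My plan is to build the coarse superspace explicitly as a complex space and then verify the universal property using Proposition \ref{prop:fact}. First I would invoke the Keel--Mori theorem (as recalled after Definition \ref{D:coarse}) for the separated DM complex stack $\S_g = (\sM_g)_{\bos}$, together with Fact \ref{F:Sg}\eqref{F:Sg2}, to obtain the coarse complex space $S_g$ of $\S_g$, which is quasi-projective with two connected components $S_g^+$ and $S_g^-$. The candidate for $\mathbb{M}_g$ will be a (generally non-reduced) complex space with underlying topological space that of $S_g$; concretely, working in a supergroupoid presentation $\sM_g = [X_1 \rightrightarrows X_0]$ with $X_0$ a finite disjoint union of Kuranishi bases (Theorem \ref{thm:exist_kur}), the bosonic quotient $\sM_g/\Gamma = [X_1/\Gamma \rightrightarrows X_0/\Gamma]$ of Theorem \ref{T:DMtrunc}\eqref{T:DMtrunc2} is an even separated DM complex stack, and I would take $\mathbb{M}_g$ to be \emph{its} coarse space (again by Keel--Mori, legitimately in the even setting). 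Since the bosonic quotient and bosonic truncation of a complex superspace have the same underlying topological space, and since $(X_0)_{\bos} = (X_0/\Gamma)_{\red}$, the reduction $(\sM_g/\Gamma)_{\red}$ is $\S_g$, so the natural map $(\mathbb{M}_g)_{\red} \to S_g$ is a homeomorphism inducing an isomorphism on reduced structures; this gives $(\mathbb{M}_g)_{\red} \cong S_g$ and in particular the two connected components and separatedness.

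Next I would check condition \eqref{D:coarse1} of Definition \ref{D:coarse}: the map $\sM_g \to \sM_g/\Gamma \to \mathbb{M}_g$ is bijective on $\C$-valued points. This is because a $\C$-point of $\sM_g$ is a susy curve over $\Spec \C$, i.e.\ (Proposition \ref{susy_spin}) a spin curve over $\Spec \C$; the canonical automorphism $\Gamma$ acts trivially on $\C$-points since $\C$ has no odd part, so $\C$-points of $\sM_g$, of $\sM_g/\Gamma$, and of $\S_g$ all agree, and the coarse space map for $\S_g$ is a bijection on $\C$-points by construction. Then for condition \eqref{D:coarse2}, given any complex superspace $Z$ and a map $f\colon \sM_g \to Z$, Proposition \ref{prop:fact} tells us $f$ factors through $\sM_g/\Gamma$. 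Now $\sM_g/\Gamma$ is an even DM complex stack, so $f$ is (the pullback along $\sM_g \to \sM_g/\Gamma$ of) a map $\sM_g/\Gamma \to Z$; composing with $Z_{\bos} \hookrightarrow Z$ being universal for maps from spaces — and here the relevant point is that any map from an even stack to a superspace $Z$ lands in $Z_{\bos}$ — we reduce to maps $\sM_g/\Gamma \to Z_{\bos}$ into an ordinary complex space, which factor uniquely through the coarse space $\mathbb{M}_g$ of $\sM_g/\Gamma$ by the classical universal property. This simultaneously shows $\mathbb{M}_g$ is the coarse space of $\sM_g/\Gamma$ and that it is an ordinary (even) complex space.

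Finally I would argue non-reducedness: if $\mathbb{M}_g$ were reduced it would equal $S_g$, and then the composite $\sM_g \to \mathbb{M}_g = S_g$ would have to factor through $(\sM_g)_{\bos} = \S_g$ by the universal property of bosonic truncation; but restricting to any Kuranishi family $\c \to U$ of a susy curve with nontrivial odd moduli, the induced map $U \to \mathbb{M}_g$ cannot be constant on the odd directions while the coarse map of spin curves sees only $U_{\bos}$, and comparing with the local description of $\mathbb{M}_g$ near a point $[C]$ as (Kuranishi base)$/\Aut(C)$ — where $\Aut(C) \ni \Gamma_C$ forces invariants that retain an even nilpotent, exactly as in Remark \ref{R:bad-Gamma} with $\C^{0|2}/\Gamma = \Spec \C[x]/(x^2)$ — one sees the structure sheaf of $\mathbb{M}_g$ is strictly larger than that of $S_g$. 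I expect \textbf{this last step — pinning down precisely why the $\Gamma$-invariants produce a genuinely non-reduced structure and packaging the local Kuranishi-quotient picture into a clean global statement — to be the main obstacle}; the rest is a fairly formal assembly of Keel--Mori, Proposition \ref{susy_spin}, Theorem \ref{T:DMtrunc}, and Proposition \ref{prop:fact}.
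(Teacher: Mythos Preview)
Your proposal is correct and follows essentially the same route as the paper: reduce to the bosonic quotient $\sM_g/\Gamma$ via Proposition \ref{prop:fact}, apply Keel--Mori to this even separated DM stack, and read off reducedness, separatedness and connected components from $(\sM_g/\Gamma)_{\red}\cong\S_g$. Your worry about the non-reducedness step is exactly where the paper is also terse: it simply observes that the \'etale atlas $X/\Gamma$ is non-reduced because $X$ has odd dimension $2g-2\geq 2$ (your $\C^{0|2}/\Gamma$ intuition from Remark \ref{R:bad-Gamma} is precisely the point), and this nilpotent structure passes to the coarse space.
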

\begin{proof}
Because of Proposition \ref{prop:fact}, the coarse superspace of $\sM_g$, if it exists, is equal to the coarse space of $\sM_g/\Gamma$; moreover, the map from $\sM_g$ to its coarse superspace factors though the quotient $\sM_g\to \sM_g/\Gamma$. The stack $\sM_g/\Gamma$ is an ordinary separated DM complex stack, hence its coarse space exists by \cite{KM}. We denote this space by $\mathbb{M}_g$.

We now focus on the second part of the corollary. The first assertions come from the fact that $\mathbb{M}_g$ is the coarse space of $\sM_g/\Gamma$, an atlas for $\sM_g/\Gamma$ is $X/\Gamma$, where $X$ is a finite union of Kuranishi families (see the proof of Theorem \ref{T:sS-superstack}), and $X/\Gamma$ is not reduced since $X$ has dimension $3g-3|2g-2$ and $2g-2\geq 2$. To describe $(\mathbb{M}_g)_{\red}$,  recall that $(\sM_g/\Gamma)_{\red}\cong (\sM_g)_{\bos}\cong \S_g$, hence $(\mathbb{M}_g)_{\red}$ is the coarse space of $\S_g$, which is nothing but $S_g$. A space $Z$ is separated if and only if $Z_{\red}$ is separated, cf. \cite[I.5.5.1 (vi)]{EGA}, hence $\mathbb{M}_g$ is separated. By the same token, since $S_g$ has two connected components, the same is true for $\mathbb{M}_g$.
\end{proof}

We do not know if $\mathbb{M}_g$ is quasi-projective; the fact that the underlying reduced space is quasi-projective does not suffice (see \cite[II.5.3.5]{EGA}). We also do not know if $\mathbb{M}_g$ can be interpreted as a moduli space of some more classical object.

\begin{remark}
There is an analogy between the situation described in Corollary \ref{cor:coarse} and the way the hyperelliptic locus sits inside the moduli stack of curves; hoping to help the reader, we briefly describe it. One can look at the first and second infinitesimal neighborhoods $H_1$ and $H_2$ of the hyperelliptic locus  inside the moduli stack of genus $g\geq3$ curves. These spaces are non-reduced DM complex  stacks. The hyperelliptic involution acts on $H_1$ and $H_2$; more specifically, it is the identity on the underlying reduced stacks, but acts non-trivially in the normal direction. The associated coarse moduli spaces of $H_1$ and $H_2$ are equal to the coarse moduli spaces of their respective quotients by the hyperelliptic involution. In particular, the coarse moduli space of $H_1$ is the coarse moduli space of hyperelliptic curves, while the coarse moduli space of $H_2$ is a thickening of the coarse moduli space of hyperelliptic curves. One might think at $H_2$ with the action of the hyperelliptic involution as a sort of toy model for $\sM_g$ with the action of the canonical involution $\Gamma$.
\end{remark}

\end{section}

\begin{section}{Periods of susy curves}\label{sec:period}
\begin{subsection}{Global period map}\label{global_per}
In this section we define periods of super symmetric curves and prove some of their properties. We are going to take an Hodge-theoretic approach; the reader can found in \cite[Section 1.2]{Carlson} a nice introduction to period matrices and period map in the classical setting.

Let $\pi \colon \c \to S$ be a susy curve, and $\pi\colon \c_{\spli}=C_L\to S_{\bos}$ its split model. The line bundle $\Ber(\c)$ is the Berenzinian of the relative cotangent bundle. The following lemma is well-known, see for instance \cite[Section 8]{Witten2} or \cite[Criterion 2]{Giulio}.

\begin{lemma}\label{lem:rank}
The line bundle $\pi_*\Ber(\c)$ is locally free on $S$ when $\pi_*L=0$ on $S_{\bos}$; in this case, its rank is $g|0$.
\end{lemma}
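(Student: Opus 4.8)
The plan is to reduce the computation of $\pi_*\Ber(\c)$ to the split model and to a relative Serre duality argument on the family of spin curves. The key point is that the Berezinian of a $1|1$ curve over $S$ is closely related to the relative dualizing sheaf, and that its pushforward can be controlled by first base-changing to the split model $\c_{\spli}=C_L\to S_{\bos}$.

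\textbf{Step 1: identify the Berezinian on the split model.} On a susy curve, the exact sequence \eqref{E:exactD} together with the isomorphism $[-,-]\colon \D^2\xrightarrow{\sim} T_{\c/S}/\D$ shows that $\Ber(T_{\c/S})=\det(T_{\c/S}/\D)\otimes\D^{-1}=\D^2\otimes\D^{-1}=\D$, hence $\Ber(\c)=\Ber(T_{\c/S})^{-1}=\D^{-1}$. On the split model $\c_{\spli}=C_L$, where by Proposition \ref{susy_spin} the susy structure corresponds to the theta characteristic, we have $\D_{\bos}=L^{-1}$, so $\Ber(\c_{\spli})$ restricts on $C$ to a sheaf whose even/odd pieces involve $L$ and $\O_C$. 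More precisely, I would compute $\Ber(\c)_{\bos}$ as a $\Z_2$-graded sheaf on $C$ and identify it; the relevant fact (see \cite[Section 8]{Witten2}, \cite[Criterion 2]{Giulio}) is that $\Ber(\c_{\spli})=\pi^*(\text{something})\otimes\bigwedge^\bullet L$ pieces, so that $\pi_*\Ber(\c_{\spli})$ has even part $R^0\pi_*(\text{a line bundle related to }K_C\otimes L^{-1})$ and odd part involving $\pi_*L$ or $\pi_*K_C$.

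\textbf{Step 2: use the vanishing $\pi_*L=0$ and Serre duality.} By hypothesis $\pi_*L=0$ on $S_{\bos}$; by relative Serre duality $\pi_*L=0$ forces $R^1\pi_*L$ to be locally free of rank $g-1$ (Riemann--Roch: $\chi(L)=\deg L-g+1=g-1-g+1=0$), and dually $\pi_*(K_C\otimes L^{-1})=\pi_*L^{-1}\otimes\text{twist}$; since $\deg L^{-1}=-(g-1)<0$ we get $\pi_*L^{-1}=0$ and $R^1\pi_*L^{-1}$ locally free of rank $g-1$. The Berezinian pushforward, when all the relevant $R^0$ and $R^1$ are well-behaved (which is exactly what $\pi_*L=0$ guarantees via cohomology and base change), is then locally free. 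The rank count: the even part contributes $h^0(K_C)=g$ and the odd part contributes $h^0(L)\cdot(\text{parity shift})$, but since $h^0(L)$ need not vanish on the nose... actually the cleaner statement is that $\Ber(\c)$ restricted to each fiber has $h^0=g$ and $h^1$ forced by Serre duality, giving rank $g|0$.

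\textbf{Step 3: extend from the split model to $\c$ over $S$.} Having established local freeness of $\pi_*\Ber(\c_{\spli})$ on $S_{\bos}$ with rank $g|0$, I would proceed by induction on the infinitesimal neighborhoods $\c_k\to S^{(k)}$ exactly as in the proof of Lemma \ref{lem:Lie_aut}: the kernel of the restriction $\Ber(\c_k)\to\Ber(\c_{k-1})$ is $\pi^*(\n_S^k/\n_S^{k+1})\otimes\Ber(\c_{\spli})$ (or its even/odd graded pieces), so one gets short exact sequences $0\to\pi^*\n_S^k/\n_S^{k+1}\otimes\Ber(\c_{\spli})\to\Ber(\c_k)\to\Ber(\c_{k-1})\to 0$. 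Pushing forward and using that the first term has vanishing $R^0$ or is locally free with the right rank (controlled by Step 2) together with Grauert's theorem, one deduces that $\pi_*\Ber(\c_k)$ is locally free of rank $g|0$ for all $k$, hence so is $\pi_*\Ber(\c)$ since $\c^{(k)}=\c$ for $k\gg 0$.

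\textbf{The main obstacle} I anticipate is Step 1: correctly identifying $\Ber(\c_{\spli})$ as a graded sheaf on $C$ and sorting out which pieces of $L$, $K_C$, $\O_C$ appear, keeping careful track of parity so that the odd cohomology contributes to the even part of the rank (this is why the answer is $g|0$ and not $g|g$ or similar). The parity bookkeeping in the Berezinian, and making sure the vanishing hypothesis $\pi_*L=0$ is used in precisely the right spot to kill the potentially non-locally-free contribution, is the delicate point; the rest is a standard cohomology-and-base-change plus infinitesimal induction argument of the kind already used for Lemma \ref{lem:Lie_aut}.
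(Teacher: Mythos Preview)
The paper does not prove this lemma; it simply cites \cite[Section 8]{Witten2} and \cite[Criterion 2]{Giulio}. So there is no in-paper argument to compare against. Your overall strategy---reduce to the split model, then climb the filtration by infinitesimal neighbourhoods as in Lemma \ref{lem:Lie_aut}---is the standard one and is sound. However, Steps 1--2 as written contain real gaps.

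The computation you are groping for in Step~1 is a clean identification of $\Ber(C_L)$ as a $\Z_2$-graded $\O_C$-module. On the split model $C_L=(C,\O_C\oplus\Pi L)$ the local generator $[dz\mid d\theta]$ is \emph{odd} and transforms as a section of $K_C\otimes L^{-1}=L$; hence, as an $\O_C$-module via the splitting, $\Ber(C_L)\cong \Pi L\otimes_{\O_C}(\O_C\oplus\Pi L)=K_C\,|\,L$ (even $\mid$ odd). This is exactly the identification underlying Lemma \ref{lem:class_period}, where the even sections $f_i(z)\theta[dz\mid d\theta]$ are matched with holomorphic differentials. Once you have this, Step~2 becomes a one-liner: $\pi_*\Ber(C_L)=\pi_*K_{C/S_{\bos}}\,|\,\pi_*L$, the even part is always locally free of rank $g$ by Grauert, and the odd part vanishes by hypothesis. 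That is the whole argument over $S_{\bos}$.

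Your Step~2 as written has errors that would derail a reader. Since $\chi(L)=0$, the hypothesis $h^0(L)=0$ forces $h^1(L)=0$, not $g-1$; and $h^1(L^{-1})=2g-2$, not $g-1$. More to the point, $R^1\pi_*L^{-1}$ is irrelevant here---that group governs the odd deformations of the susy curve (cf.\ Theorem \ref{thm:exist_kur}), not the pushforward of the Berezinian. Once Step~1 is done correctly none of these side computations are needed. Your Step~3 is correct in spirit; to make it go through you should also observe that $R^1\pi_*K_{C/S_{\bos}}$ is locally free (of rank $1$) and $R^1\pi_*L=0$, so that the long exact sequence of pushforwards attached to your filtration behaves well and local freeness propagates.
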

%

From now on, we consider susy curves $\pi:\c\to S$ such that $\pi_*L=0$. As explained in \cite[Section 8.1]{Witten2} or \cite[Section 2.4]{Ber}, we can integrate sections of the Berenzinian line bundle along closed topological $1$-cycle in $C$; moreover, the value of the integral depends just on the homology class of the cycle. This means that to any global section of the Berenzinian line bundle we can associate a cohomology class in $H^1(C,\C)$.

\medskip


The global sections of $\Ber(\c)$, because of Lemma \ref{lem:rank}, are even; in other words, they are invariant under the canonical automorphism $\Gamma$. Roughly speaking, this means that the integral of a global section of the Berenzinian line bundle along a fixed closed 1-cycle is an even function on the base $S$. Let us formalize this idea. Assume that $\c$, as a topological space, is homeomorphic to $C\times S_{\bos}$, where $C$ is a fibre. Integration gives a morphism
$$ \int \colon \pi_*\Ber(C) \to H^1(C,\Z) \otimes_{\Z} (\O_S)^{\Gamma} $$
where $H^1(C,\Z)$ is the singular co-homology of the underlining topological space. In a more general set up, we should write $R^1f_*\mathbb{Z}$ rather than $H^1(C,\mathbb{Z})$.

\medskip

Riemann bilinear relations hold (see for example \cite[Section 2.8]{Ber} and \cite[Section 8.2]{Witten2}), so the image is isotropic for the intersection pairing on $H^1(C,\Z)$. Combining this with Lemma \ref{lem:rank}, we learn that the image of the integration map is an isotropic $g$-dimensional subspace of the $2g$-dimensional $\O_S^{\Gamma}$-module $H^1(C,\Z) \otimes_{\Z} (\O_S)^{\Gamma}$. In a more fancy language, this means that we have a morphism
$$
P\colon S/\Gamma\to \A_g
$$
where $\A_g$ is the classical moduli stack of principally polarized abelian varieties of dimension $g$. We land in $\A_g$ because $(\O_S)^{\Gamma}$ is a sheaf of even commutative ring on $S_{\bos}$. Similarly, since the values of the integral is invariant under $\Gamma$, the morphism $P$ factors trough the quotient $S/\Gamma$. 

It is well-known that the image of a split curve is just the Jacobian of the reduced curve; let us write out the computation for the reader convenience.
\begin{lemma}\label{lem:class_period}
The period matrix of a split curve $C_L$ with $h^0(L)=0$ equals the period matrix of $C$.
\end{lemma}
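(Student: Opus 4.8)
The plan is to unravel the definitions and observe that, for a split susy curve $\c_{\spli}=C_L$, the Berezinian bundle and the integration pairing reduce to their classical counterparts on $C$. First I would recall that for a split supermanifold $C_L$ over a point, the Berezinian of the relative cotangent bundle is $\Ber(C_L)=K_C\otimes L^{-1}$ (more precisely, using the splitting $T_{C_L/\C}$ has even part $TC$ and odd part $L^{-1}$, so the Berezinian of the cotangent direction is $K_C\otimes (L^{-1})^{\vee,{\rm Ber}}=K_C\otimes L^{-1}$). Since $h^0(L)=0$ the hypothesis of Lemma \ref{lem:rank} is satisfied, and moreover $\phi\colon K_C\xrightarrow{\cong} L\otimes L$ gives $K_C\otimes L^{-1}\cong L$, but more usefully one has the exact sequence relating $\pi_*\Ber(C_L)$ with the cohomology of $C$: because $L$ has no sections and (by Serre duality and $h^0(L)=0$, using that $L$ is a theta characteristic) also $h^1(L)=h^0(L)=0$, the odd contributions vanish and $H^0(C_L,\Ber(C_L))\cong H^0(C,K_C)$, of dimension $g$.

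Next I would trace through the construction of the integration map. A global section of $\Ber(C_L)$, once we turn off the odd moduli, is literally a section of $K_C$ (the odd pieces drop out by the vanishing just noted), and the Berezinian integral of such a section over a topological $1$-cycle $\gamma\subset C$ coincides with the ordinary integral $\int_\gamma \omega$ of the corresponding holomorphic $1$-form $\omega\in H^0(C,K_C)$ — this is exactly the content of the superanalytic integration recalled from \cite[Section 8.1]{Witten2} and \cite[Section 2.4]{Ber} specialized to a product situation with no odd directions. Hence the map $P$ restricted to the point $[C_L]\in S/\Gamma$ sends it to the subspace $\{(\int_\gamma\omega)_\gamma : \omega\in H^0(C,K_C)\}\subset H^1(C,\Z)\otimes\C$, which is by definition the Hodge-theoretic description of the Jacobian $J(C)$ together with its principal polarization. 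Thus the period matrix of $C_L$ equals the period matrix of $C$.

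The main obstacle — really the only nontrivial point — is the identification $H^0(C_L,\Ber(C_L))\cong H^0(C,K_C)$ as the space over which one integrates, and in particular checking that the potential odd-valued contributions to the period integrals vanish. This uses in an essential way the hypothesis $h^0(L)=0$ (equivalently, via the theta-characteristic isomorphism $K_C\cong L^{\otimes 2}$ and Serre duality, also $h^1(L)=0$), which forces $\pi_*(L)=\pi_*(K_C\otimes L^{-1})^{\rm odd}=0$ so that Lemma \ref{lem:rank} applies and the rank is exactly $g|0$; the even-ness of the sections then guarantees that the integration map takes values in $H^1(C,\Z)\otimes(\O_S)^\Gamma$ with no nilpotent odd corrections. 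Once this is in place, the remaining comparison of the two integration pairings is a routine unwinding of the definition of Berezinian integration in the split case, and the Riemann bilinear relations on $C_L$ reduce verbatim to the classical ones on $C$, so the two points of $\A_g$ — and hence the two period matrices — coincide.
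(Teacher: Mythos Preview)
Your proposal is correct and follows essentially the same approach as the paper's proof: both identify $H^0(C_L,\Ber(C_L))$ with $H^0(C,K_C)$ using $h^0(L)=0$, and then observe that Berezin integration over $1$-cycles reduces to the ordinary period integrals. The paper carries this out by writing an explicit local basis $\hat\omega_i=f_i(z_\alpha)\theta_\alpha[dz_\alpha\,|\,d\theta_\alpha]$ in superconformal coordinates and integrating, whereas you phrase the same identification cohomologically; one minor imprecision is your line ``$\Ber(C_L)=K_C\otimes L^{-1}$'', which conflates the line bundle on $C_L$ with its bosonic restriction (the even piece of $\Ber(C_L)$ as an $\O_C$-module is $K_C$, the odd piece is $L$), but you immediately recover the correct statement about $H^0$, so the argument goes through.
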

\begin{proof}
Fix a symplectic basis $A_i$, $B_i$ for the homology of $C$, let $\o_1,\dots ,\o_g$ be the corresponding basis of the holomorphic differentials and $\tau$ the period matrix. Let $(z_{\alpha},\theta_{\alpha})$ be a conformal atlas for $C_L$. Locally we write
\[\o_i=f_i(z_{\alpha})dz_{\alpha}\,.\]
Locally, a basis of $H^0(C_L,\Ber(C_L))$ is given by
\[\hat{\o}_i=f_i(z_{\alpha})\theta_{\alpha}[dz_{\alpha}\mid d\theta_{\alpha}]\]
since  can check directly that $f_i(z_{\alpha})\theta_{\alpha}$ transforms as a section of the Berenzinian. We are using that $h^0(L)=0$ to assume that $\hat{\o}_i$ spans all $H^0(C_L,\Ber(C_L))$. Integrating $\hat{\o}_i$ along the cycles $B_j$ one gets again the period matrix $\tau$.
\end{proof}

The morphism $P$ we defined above is functorial; so we have a period map

$$
\wt P\colon \sM_g^+\dashrightarrow \A_g,
$$
where $\sM_g^+$ is the connected component of $\sM_g$ such that $(\sM_g^+)_{\bos}=\S_g^+$ is moduli stack of even spin curves. The locus where $\wt P$ is not defined is theta null divisor, which consists of all even spin curves $(C,L, \phi)$ such that $h^0(L)>0$. Lemma \ref{lem:class_period} implies that $\wt P$ is an extension of the classical period map. In other words, $(\wt P)_{\bos}$ on $(\sM_g^+)_{\bos}$ maps a spin curve $(C,L, \phi)$ to the Jacobian $J(C)$ of $C$. 


We can define a slightly more refined version of the period map; namely, we can lift it to a morphism
$$
P\colon \sM_g^+\dashrightarrow \N_g,
$$
where $\N_g$ is the moduli stack of abelian varieties endowed with a symmetric theta divisor. To do this, we first observe that $(\wt P)_{\bos}$ admits a natural lifting to $\N_g$ by mapping a spin curve $(C,L, \phi)$ to the Jacobian of $C$ endowed with the symmetric theta divisor associated to the spin structure $L$, i.e. the translation via $L^{-1}$ of the natural theta divisor $\Theta_C\in {\rm Pic}^g(C)$. The morphism from $\N_g$ to $\A_g$ is \'{e}tale, so the lift of $(\wt P)_{\bos}$ gives a lift of $\wt P$. The classical Torelli theorem implies that this new morphism $P$ is injective on complex points. However, the morphism $P$ is far from being an embedding since it factors trough the quotient by the canonical automorphism, so that we can write
$$
P/\Gamma\colon \sM_g^+/\Gamma \dashrightarrow \N_g.
$$
The moduli stack $\N_g$ admits a coarse space $N_g$, so by the universal properties of the coarse moduli spaces we have a third version of the period map at the level of coarse moduli spaces
$$
\ov P\colon \mathbb{M}_g^+ \dashrightarrow N_g.
$$

\end{subsection}

\begin{subsection}{The classical formula}\label{sec:class_formula}
There is a classical formula for the super period map: it was first discovered by D'Hoker and Phong in \cite{DP}; their argument has been improved and expanded in \cite[Section 8.3]{Witten2}. Both the proof and the statement are quite analytic; let us briefly explain their formula.

We fix a complex point $[C_L]$ of $\sM_g^+$ such that $h^0(L)=0$; we work locally on a Kuranishi family for $C_L$.  On the base we have $2g-2$ odd coordinates, let us call them $\eta_1,\dots , \eta_{2g-2}$. The local co-ordinates come from a basis of the co-tangent space; so the $\eta_i$ are a basis of $H^0(C,K_C\otimes L)$. After fixing a symplectic basis of the homology of $C$, we can now consider the period matrix $\tau=(\tau_{i,j})$  as a function on the base. We want to study the Taylor expansion of $\tau$ in the odd variables around the point $0:=[C_L]$. Being the period matrix an even function, this expansion will look like
$$
\tau_{i,j}=\tau_{i,j}(0)+\sum_{a,b}\sigma_{a,b}^{(i,j)}\eta_a\eta_b 
$$
modulo functions of order at least $3$ in $\eta_i$. We omit the dependance on the even moduli. The quantity $\sigma_{a,b}^{(i,j)}$ lives in tangent space to $\N_g$ at $\tau(0)$. In \cite[Section 8.3 Formula 8.38]{Witten2}, the following formula is proven
\begin{equation}\label{formula}
\sigma_{a,b}^{(i,j)}=\int_{C\times C}p_1^*(f_a\omega_i) \wedge S_L  \wedge p_2^*(\omega_jf_b).
\end{equation}
In this formula, the $f_i$ are a basis of $H^1(C,L^{-1})$ Serre-dual to the $\eta_i$, so they are $L^{-1}$-valued $(0,1)$ forms on $C$. The $\omega_i$ are a normalized basis of $H^0(C,K)$. The kernel $S_L$ is the Szeg\H{o} kernel associated to the spin structure $L$; we are going to describe it in Section \ref{sec:Szego}. The $p_i$ are the projections on the two factors of $C\times C$.


\end{subsection}

\begin{subsection}{Infinitesimal period map}\label{sec:inf_per}

We want to study the differential of the period map 
$$
P\colon \sM_g^+\dashrightarrow \N_g
$$
from an algebraic point of view. Being the period map  $\Gamma$-invariant, the odd tangent space of $\sM_g^+$ is in the kernel of the differential of $dP$. It is thus more interesting to look at the differential of the period map for the quotient by $\Gamma$
$$
P/\Gamma\colon \sM_g^+/\Gamma \dashrightarrow \N_g.
$$
Fix a complex point $[C_L]$ in $\sM_g^+$ such that $h^0(C,L)=0$. Our goal is to describe the differential at the point $C_L$
$$
 d(P/\Gamma)_{C_L}\colon T_{[C_L]}(\sM_g/\Gamma) \to T_{P(C_L)}\N_g,
$$ 
which is called the \emph{infinitesimal period map} at $C_L$.

To start with, let us describe explicitly the domain and the codomain of $d(P/\Gamma)_{C_L}$ and some background about the Szeg\H{o} kernel $S_L$ associated to $L$. 

%
%

\begin{subsubsection}{First infinitesimal neighborhood of $C_L$ in $\sM_g/\Gamma$}
Thanks to the description of the Kuranishi family for $C_L$ given in Theorem \ref{thm:exist_kur}, we know that the tangent space of $\sM_g/\Gamma$ at $[C_L]$ splits non-canonically into  the sum of $\bigwedge^2H^1(C,\sT)$ and $H^1(C,T_C)$. To avoid this non-canonical splitting, we give the following more intrinsic description. 

Consider  the following exact sequence associated to the diagonal $\Delta$ inside the surface $C\times C$:
\begin{equation}\label{preAg}
0\to \O \to \O(\Delta)\to \O_{\Delta}(\Delta)=T_C\to 0,
\end{equation}
where we have used the well-known fact that the normal sheaf of $\Delta$ inside $C\times C$ is isomorphic to the tangent sheaf $T_C$ once we identify $\Delta$ with the curve $C$. 
By tensoring \eqref{preAg} with $\sT\boxtimes \sT(-\Delta)$ we get the exact sequence 
\begin{equation}\label{presM}
0\to \sT\boxtimes \sT(-\Delta) \to \sT\boxtimes \sT\to \sT\boxtimes \sT|_{\Delta}=T_C\to 0.
\end{equation} 
Taking cohomology we obtain the exact sequence of $\C$-vector spaces
\begin{equation*}
0\to  H^1(C,T_C)  \to H^2(C\times C, \sT\boxtimes \sT(-\Delta)) \to H^1(C,\sT)^{\otimes 2}\to 0.
\end{equation*}
The invariant part of the above sequence with respect to the canonical involution $\iota$ on $C\times C$ switching the two factors (using the sign conventions of \cite[Sec. 3.1]{DW2}) is
\begin{equation}\label{sM}
0\to  H^1(C,T_C)  \to H^2(C\times C, \sT\boxtimes \sT(-\Delta))^+=T_{C_L}(\sM_g/\Gamma) \to \bigwedge^2H^1(C,\sT)\to 0.
\end{equation}
This sequence coincides with the sequence $(3.13)^+$ of \cite{DW2}, and it is the dual of the sequence $(3.11)^+$ of \cite{DW2} that represents the first obstruction to the splitness of the superstack $\sM_g$ of susy curves over the stack of spin curves $\S_g$.


\end{subsubsection}

\begin{subsubsection}{First infinitesimal neighborhood of $P(C_L)$ in $\N_g$}

Since the map $\N_g\to \A_g$ is \'etale and the image of $P(C_L)$ in $\A_g$ is equal to the Jacobian of $C$ by Lemma \ref{lem:class_period}, the tangent space $T_{P(C_L)}\N_g$ is equal to $T_{J(C)}\A_g$, which is well-known to be equal to 
$\Sym^2 H^1(C,\O)$.
We want now to describe  $\Sym^2 H^1(C,\O)$ as a middle term of a canonical short exact sequence which is suitable for the study of periods in the super setting. We look at the following part of the long exact sequence associated to \eqref{preAg}
\begin{equation}\label{map-c}
H^1(C,T_C)\xrightarrow{c} H^2(C\times C,\mathcal{O})\to H^2(C\times C, \mathcal{O}(\Delta))\to 0 
\end{equation}

\begin{proposition}\label{P:Sym2}
We have 
$$ H^1(C,T_C)^+=H^1(C,T_C) \quad \textrm{and} \quad H^2(C\times C,\mathcal{O}_{C\times C})^+= \Sym^2 H^1(C,\mathcal{O}_C)\,,$$
and the even part of the map  map $c$  from \eqref{map-c}  is Serre dual to the multiplication map
$$
m\colon \Sym^2H^0(C,K_C)\to H^0(C,2K_C).
$$
\end{proposition}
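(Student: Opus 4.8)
The plan is to establish the three assertions in turn, the parity statements being essentially a bookkeeping and the statement about $c$ the substantive point.

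For the parities I will invoke the sign conventions of \cite[Sec.~3.1]{DW2} --- the same ones used above to pass from \eqref{presM} to \eqref{sM}. The Künneth formula together with the vanishing $H^{\ge 2}(C,-)=0$ on a curve gives $H^2(C\times C,\O_{C\times C})=H^1(C,\O_C)\otimes H^1(C,\O_C)$, on which $\iota$ acts by the transposition $a\otimes b\mapsto b\otimes a$; hence $H^2(C\times C,\O_{C\times C})^+=\Sym^2 H^1(C,\O_C)$. For the first equality, note that $H^1(C,T_C)=H^1(\Delta,\O_\Delta(\Delta))$ is the cohomology of a sheaf supported on the $\iota$-fixed diagonal; that $\iota$ then acts trivially on it is read off directly from \eqref{sM}, where $H^1(C,T_C)$ already occurs inside the $+$-part. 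Thus $H^1(C,T_C)^+=H^1(C,T_C)$.

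For the identification of $c$, the plan is to use Grothendieck--Serre duality on the smooth projective surface $C\times C$, whose canonical bundle is $K_C\boxtimes K_C$. Applying $R\HHom(-,K_C\boxtimes K_C)$ to the exact sequence \eqref{preAg} and using $\EExt^1(\O_\Delta(\Delta),K_C\boxtimes K_C)\cong (K_C\boxtimes K_C)|_\Delta$ together with the adjunction isomorphism $(K_C\boxtimes K_C)|_\Delta\cong 2K_C$, one sees that \eqref{preAg} is Serre-dual to the short exact sequence
\[
0\to (K_C\boxtimes K_C)(-\Delta)\to K_C\boxtimes K_C\to (K_C\boxtimes K_C)|_\Delta\to 0.
\]
By compatibility of Serre duality with connecting homomorphisms, the Serre dual of $c\colon H^1(C,T_C)\to H^2(C\times C,\O_{C\times C})$ is then the restriction-to-the-diagonal map $H^0(C\times C,K_C\boxtimes K_C)\to H^0(\Delta,(K_C\boxtimes K_C)|_\Delta)$. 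Unwinding both sides by Künneth and adjunction, and computing in a local coordinate $t$ on $C$, with coordinates $t_1,t_2$ on $C\times C$ and $\Delta=\{t_1=t_2\}$, via $dt_1\wedge dt_2=(dt_1-dt_2)\wedge dt_2$, one finds that this restriction map sends $\omega\otimes\eta$ to the product $\omega\cdot\eta\in H^0(C,2K_C)$: it is the multiplication map.

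Finally, I will combine the two steps. By the parity statements everything takes place in the $+$-eigenspaces: $c$ factors through $c^+\colon H^1(C,T_C)\to\Sym^2 H^1(C,\O_C)$ (equivalently, the multiplication map kills $\bigwedge^2 H^0(C,K_C)$, so its Serre dual automatically lands in $\Sym^2 H^1(C,\O_C)$), and the Serre dual of $c^+$ is the restriction of the above multiplication map to $\Sym^2 H^0(C,K_C)$, that is, precisely the map $m\colon\Sym^2 H^0(C,K_C)\to H^0(C,2K_C)$. The delicate point I expect is the sign bookkeeping required to match the conventions of \cite[Sec.~3.1]{DW2} and so guarantee the claimed parities; the Serre-duality identification, while it needs a careful local computation to pin down the multiplication map exactly, is otherwise standard.
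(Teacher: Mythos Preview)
Your proposal is correct and follows essentially the same route as the paper: both dualize the sequence \eqref{preAg} to obtain the sequence $0\to K_{C\times C}(-\Delta)\to K_{C\times C}\to 2K_C\to 0$, identify the resulting map as restriction to the diagonal (hence multiplication), and handle the parities by invoking the conventions of \cite[Sec.~3.1]{DW2}. The only cosmetic differences are that the paper establishes the parities on the Serre-dual side (citing \cite[Sec.~3.3]{DW2} for the evenness of $2K_C$ on $\Delta$) and pins down the coboundary map via a uniqueness argument (there is, up to scalar, a unique map $\O(-\Delta)\to\O$ for $g\ge 2$) rather than your local coordinate check.
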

\begin{proof}
We first compute the Serre dual of the sequence  \eqref{preAg}. Applying the functor ${\mathcal Hom }(-,\O)$ to \eqref{preAg}, we get the following short exact sequence of sheaves on $C\times C$
\begin{equation}\label{dual}
0 \to \O(-\Delta) \to \O \xrightarrow{\delta_1} {\mathcal Ext}^1(\O_{\Delta}(\Delta),\O) \to 0,
\end{equation}
where $\delta_1$ is the first coboundary map. In the sequence \eqref{dual}, we have ${\mathcal Ext }^1(\O_{\Delta}(\Delta),\O) =\O_{\Delta}$ and the coboundary map $\delta_1$ is the restriction to the diagonal. This follows from the fact that, if  the genus of the curve $C$ is at least $2$, there exists a unique up to a scalar morphism from $\O(-\Delta)$ to $\O$. Tensoring the exact sequence \eqref{dual} with the canonical bundle $K_{C\times C}=K_C\boxtimes K_C$, we thus obtain the exact sequence 
\begin{equation}\label{Serre-dual}
0\to  K_{C\times C}(-\Delta) \to K_{C\times C}  \xrightarrow{\delta_1}   K_{C\times C}|_{\Delta}=2K_C \to 0,
\end{equation}
which by definition is the Serre dual of \eqref{preAg}. The map $\delta_1$ is just the restriction to the diagonal, so $H^0(\delta_1)$ is the multiplication.

 As explained in \cite[Section 3.3]{DW2}, the sheaf $2K_C$ supported on the diagonal $\Delta$ is entirely even (in the notation of  \cite[Section 3.3]{DW2}, we are taking $a=1$ and $c=0$). Arguing as in \cite{DW2}, we deduce that that $H^0(C,2K_C)^+=H^0(C,2K_C)$, and  $H^0(C\times C, K_{C\times C})^+=\Sym^2H^0(C,K_C)$. We obtain the proposition by Serre duality.

\end{proof}

Recall also that the kernel of the multiplication mao  $m$ is the vector space $I_2(C)$ of  quadrics containing the canonical model of $C$ and that $m$ is surjective if and only if $C$ is not hyperelliptic by Noether's Theorem. 





\end{subsubsection}

\begin{subsubsection}{The Szeg\H{o} kernel $S_L$}\label{sec:Szego}

In this section  we recall the definition of the Szeg\H{o} kernel $S_L$ associated to a theta characteristic $L$ such that  $h^0(\sK)=0$. We adopt the algebraic point of view of \cite{BZB} rather than the classical analytic approach. 

Tensoring the exact sequence \eqref{preAg} by $\sK\boxtimes \sK$, we obtain the exact sequence
$$
0\to \sK\boxtimes \sK \to \sK\boxtimes \sK (\Delta) \to \O_{\Delta}\to 0.
$$
Since $h^0(\sK)=h^1(\sK)=0$, we obtain an isomorphism
$$
H^0(C\times C,\sK\boxtimes \sK (\Delta))\stackrel{\cong}{\longrightarrow} H^0(\Delta, \O_{\Delta})\cong \C.
$$
The Szeg\H{o} kernel $S_L$ is the preimage of $1\in H^0(\Delta, \O_{\Delta})\cong \C$ under the above isomorphism. 

Notice that, since $\sK=K_C\otimes \sK^{-1}$, the above isomorphism can also be identified with the residue map along the diagonal  
$$H^0(C\times C,\sK\boxtimes \sK (\Delta)) \xrightarrow{\mathrm{Res}} H^0(C,\End(\sK)).$$ 
Therefore, the Szeg\H{o} kernel $S_L$ is the preimage of the identity under the residue map $\mathrm{Res}$.

The  Szeg\H{o} kernel is symmetric with respect ot the involution on $C\times C$, as explained for instance in \cite[Remark 5.1.4]{BZB}.

\end{subsubsection}
\begin{subsubsection}{The infinitesimal period map}

We are now in position to describe the infinitesimal period map
$$ 
d(P/\Gamma)_{C_L}\colon T_{[C_L]}(\sM_g^+/\Gamma) \to T_{P(C_L)}\N_g, 
$$
under the assumption $h^0(\sK)=0$. 
Recall that $T_{[C_L]}(\sM_g^+/\Gamma)=H^2( \sT\boxtimes \sT(-\Delta))^+$ sits in the exact sequence \eqref{sM} while $T_{P(C_L)}\N_g=\Sym^2H^1(C,\O)$ sits in the exact sequence 
\begin{equation}\label{Ag}
H^1(C,T_C)  \xrightarrow{c=m^{\vee}} \Sym^2H^1(C,\O) =T_{P(C_L)}\N_g \to I_2^{\vee}  \to 0,
\end{equation}
by Proposition \ref{P:Sym2}.


The infinitesimal period map $d(P/\Gamma)_{C_L}$ is a morphism
$$ 
d(P/\Gamma)_{C_L}\colon H^2( \sT\boxtimes \sT(-\Delta))^+\to \Sym^2H^1(C,\O)
$$
which is the identity on $H^1(C,T_C)$ since it extends the differential of the classical period map $P_{\bos}:\S_g\to \N_g$.

\begin{lemma}\label{unique}
Assume that $h^0(\sK)=0$. 
The multiplication by the Szeg\H{o} kernel $S_L$ is the unique morphism form the exact sequence of sheaves (\ref{presM}) on $C\times C$ 
\begin{displaymath}
0\to \sT\boxtimes \sT(-\Delta) \to \sT\boxtimes \sT\to \sT\boxtimes \sT|_{\Delta}=T_C\to 0
\end{displaymath} 
to the exact sequence (\ref{preAg}) on $C\times C$
\begin{displaymath}
0\to \O \to \O(\Delta)\to \O_{\Delta}(\Delta)=T_C\to 0
\end{displaymath}
which extends the identity on $T_C$. 
\end{lemma}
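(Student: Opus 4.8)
The plan is to identify the set of morphisms of short exact sequences extending $\id_{T_C}$ with a certain cohomology group and show that group is one-dimensional, generated by multiplication with $S_L$. First I would observe that a morphism of short exact sequences as in the statement is the same datum as a morphism $\psi\colon \sT\boxtimes\sT \to \O(\Delta)$ of sheaves on $C\times C$ together with a compatible morphism on the sub-objects; by the snake/five-lemma setup, once we require that the induced map on the quotients $\sT\boxtimes\sT|_\Delta = T_C \to \O_\Delta(\Delta)=T_C$ is the identity, the morphism $\psi$ is determined up to an element of $\Hom_{C\times C}(\sT\boxtimes\sT, \O)$, i.e. $\Hom(\sT\boxtimes\sT(-\Delta),\O)$-type corrections are already pinned down. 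Concretely, the difference of any two such morphisms of sequences is a morphism $\sT\boxtimes\sT \to \O$ that kills the subsheaf $\sT\boxtimes\sT(-\Delta)$, hence factors through $\sT\boxtimes\sT|_\Delta = T_C$, and must also be zero on the quotient side; so the difference lies in the image of $\Hom(T_C,\O_\Delta) \to \Hom(\sT\boxtimes\sT,\O(\Delta))$ composed appropriately, and one checks this forces the difference to vanish once $h^0(\sK)=0$ is used.

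More cleanly, I would argue as follows. Giving a map of the two displayed sequences extending $\id_{T_C}$ is equivalent to giving a lift in the diagram, i.e. a morphism $\sT\boxtimes\sT \to \O(\Delta)$ whose restriction to the diagonal is the canonical identification $T_C = \End(\sT) \xrightarrow{\sim} T_C$. Such morphisms form a torsor under $\Hom_{C\times C}(\sT\boxtimes\sT, \O) = H^0(C\times C, \sK\boxtimes\sK)$. By the Künneth formula $H^0(C\times C,\sK\boxtimes\sK) = H^0(C,\sK)\otimes H^0(C,\sK) = 0$ because $h^0(\sK)=0$. Hence the lift, if it exists, is unique. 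Existence is exactly the construction of the Szeg\H{o} kernel recalled in \S\ref{sec:Szego}: multiplication with $S_L \in H^0(C\times C,\sK\boxtimes\sK(\Delta))$ sends $\sT\boxtimes\sT = (\sK\boxtimes\sK)^\vee$... more precisely, since $\sT = \sK^{-1}$ and $\sK^2 = K_C$, one has $\sT\boxtimes\sT(-\Delta)\otimes(\sK\boxtimes\sK)(\Delta) = \O$, so $S_L$ defines by multiplication a map $\sT\boxtimes\sT(-\Delta) \to \O$, and more generally $\sT\boxtimes\sT \to \O(\Delta)$, whose residue along $\Delta$ is the identity of $\End(\sK)=\O_\Delta$ by the very definition of $S_L$. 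Chasing through the identifications $\O_\Delta(\Delta) = T_C$ and $\sT\boxtimes\sT|_\Delta = \End(\sK)\otimes(\text{twist})$, this boundary value is precisely $\id_{T_C}$.

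The main obstacle is bookkeeping of the canonical identifications: one must check that the residue of $S_L$, which by construction equals $1\in H^0(\O_\Delta)$ under the isomorphism $H^0(\sK\boxtimes\sK(\Delta))\xrightarrow{\sim}H^0(\End(\sK))$, really does correspond to $\id_{T_C}$ after the two independent identifications $\O_\Delta(\Delta)\cong T_C$ (adjunction/normal bundle of the diagonal) and $(\sT\boxtimes\sT)|_\Delta \cong T_C$ (the bracket isomorphism $\D^2\xrightarrow{\sim}T_{\c/S}/\D$ on the split curve, i.e. $\sK^{-1}\otimes\sK^{-1}\xrightarrow{\sim}K_C^{-1}$ via $\phi$). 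I would verify this in local superconformal coordinates $(z,\theta)$: writing $\sT$ with local frame $\theta$, the section $S_L$ has local expansion with leading term $\tfrac{1}{z-w}$ along $\Delta$, and a direct computation shows its residue is the identity endomorphism and that under the normal-bundle identification this is $\partial/\partial z \mapsto \partial/\partial z$, i.e. $\id_{T_C}$. Once the two existence-and-compatibility points are settled, uniqueness is immediate from the Künneth vanishing $H^0(C,\sK)^{\otimes 2}=0$, completing the proof.
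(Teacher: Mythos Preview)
Your proposal is correct and follows essentially the same approach as the paper. The paper's proof is even more compressed: it simply observes that $h^0(C\times C,\sK\boxtimes\sK(\Delta))=1$ (established in \S\ref{sec:Szego}), so there is a unique map $\sT\boxtimes\sT\to\O(\Delta)$ up to scalar, and the normalization that the residue equals $\id$ singles out $S_L$; your torsor argument under $H^0(\sK\boxtimes\sK)=H^0(\sK)^{\otimes 2}=0$ is the same vanishing, just read off one step earlier in the exact sequence $0\to\sK\boxtimes\sK\to\sK\boxtimes\sK(\Delta)\to\O_\Delta\to 0$.
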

\begin{proof}
The multiplication by $S_L$ restricts to the identity on $T_C$ because of the definition of $S_L$ (see the end of  \ref{sec:Szego}). The claim now follows from the fact  $h^0(C\times C,\sK\boxtimes \sK (\Delta))=1$, which was proved in \ref{sec:Szego}.
\end{proof}

Our description of the infinitesimal period map is contained in the following

\begin{theorem}\label{diff_per}
Fix a complex point $[C_L]$ in $\sM_g^+$ such that $h^0(C,L)=0$. The infinitesimal period map
$$ 
d(P/\Gamma)_{C_L}\colon T_{[C_L]}(\sM_g^+/\Gamma)=H^2( C\times C, \sT\boxtimes \sT(-\Delta))^+\to T_{P(C_L)}\N_g =\Sym^2H^1(C,\O)
$$
is the map
$$H^2( C\times C, \sT\boxtimes \sT(-\Delta))^+\to \Sym^2 H^1(C,\O)$$
given by the even part of the $H^2$ of the morphism of sheaves on $C\times C$
$$
\sT\boxtimes \sT(-\Delta) \hookrightarrow \O
$$
defined by the multiplication with the Szeg\H{o} kernel $S_L$ associated to $L$.

In particular, the differential $d(P/\Gamma)_{C_L}$ induces a morphism from the exact sequence \eqref{sM} to the  exact sequence  \eqref{Ag}, which restricts to the identity on $H^1(C,T_C)$.
\end{theorem}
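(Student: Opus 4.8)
The plan is to reduce the statement to the analytic formula \eqref{formula} of D'Hoker--Phong and Witten, reinterpreting its right-hand side as a Serre-duality pairing for the cohomology map induced by multiplication with $S_L$. Write $\mu_{S_L}\colon \sT\boxtimes\sT(-\Delta)\hookrightarrow\O$ for the sheaf morphism singled out in Lemma \ref{unique}; since $S_L$ is symmetric under the involution $\iota$ of $C\times C$, the induced map $H^2(\mu_{S_L})$ is $\iota$-equivariant, hence restricts to $H^2(C\times C,\sT\boxtimes\sT(-\Delta))^+\to H^2(C\times C,\O)^+=\Sym^2 H^1(C,\O)$, which is our candidate for $d(P/\Gamma)_{C_L}$.

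\emph{Step 1: the classical part.} First I would observe that, by Lemma \ref{unique}, $\mu_{S_L}$ extends to a morphism of the short exact sequences \eqref{presM}$\to$\eqref{preAg} which is the identity on the quotient term $T_C$. Taking cohomology, using that on a curve $H^0(C,\sT)=0$ and $H^2(C,\sT)=H^2(C,\O)=0$, and passing to $\iota$-invariants, yields a morphism from \eqref{sM} to \eqref{Ag}. By naturality of connecting homomorphisms its left vertical arrow is $\mathrm{id}_{H^1(C,T_C)}$, and the composite $H^1(C,T_C)\hookrightarrow H^2(C\times C,\sT\boxtimes\sT(-\Delta))^+\xrightarrow{H^2(\mu_{S_L})^+}\Sym^2 H^1(C,\O)$ is the connecting map $c$ of \eqref{map-c}, i.e. (Proposition \ref{P:Sym2}) the Serre dual of the multiplication $m\colon\Sym^2 H^0(C,K_C)\to H^0(C,2K_C)$, which is the differential of the classical period map $P_{\bos}$. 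Since $P/\Gamma$ restricts to $P_{\bos}$ on the reduced directions, $d(P/\Gamma)_{C_L}$ and $H^2(\mu_{S_L})^+$ already agree on the subspace $H^1(C,T_C)$, and it remains to compare them on a complementary family of lifts of a basis of the quotient $\bigwedge^2 H^1(C,\sT)$.

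\emph{Step 2: the new even directions.} Next I would fix the dictionary coming from Theorem \ref{thm:exist_kur}: the odd coordinates $\eta_1,\dots,\eta_{2g-2}$ on the Kuranishi base form a basis of $H^0(C,K_C\otimes L)=H^1(C,\sT)^\vee$, the even monomials $\eta_a\eta_b$ give, modulo $\mathfrak{m}^2$, a basis of the cotangent directions of $U/\Gamma$ complementary to the even moduli, and under \eqref{sM} the tangent vector $\partial/\partial(\eta_a\eta_b)$ corresponds to $f_a\wedge f_b$, where $f_a\in H^1(C,\sT)$ is the Serre-dual basis, represented by a $\bar\partial$-closed $\sT$-valued $(0,1)$-form $\phi_a$; likewise $\tau_{ij}$ pairs $d(P/\Gamma)_{C_L}$ against the normalized basis $\omega_i$ of $H^0(C,K_C)$. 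On the algebraic side, $f_a\wedge f_b$ is represented in $H^2(C\times C,\sT\boxtimes\sT)$ by $p_1^*\phi_a\wedge p_2^*\phi_b$; lifting it to a Dolbeault representative of a class in $H^2(C\times C,\sT\boxtimes\sT(-\Delta))$ and applying $\mu_{S_L}$ (multiplication by $S_L$, a section of $(\sK\boxtimes\sK)(\Delta)$ with at worst a simple, hence integrable, pole along $\Delta$), the Serre pairing of $H^2(\mu_{S_L})(f_a\wedge f_b)$ against $\omega_i\odot\omega_j$ becomes the absolutely convergent integral $\int_{C\times C}p_1^*(f_a\omega_i)\wedge S_L\wedge p_2^*(\omega_j f_b)$, which by \eqref{formula} equals $\sigma_{a,b}^{(i,j)}$, the $\eta_a\eta_b$-coefficient of $\tau_{ij}$, i.e. the $\omega_i\odot\omega_j$-component of $d(P/\Gamma)_{C_L}(\partial/\partial(\eta_a\eta_b))$. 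Since $H^1(C,T_C)$ together with the lifts of the $f_a\wedge f_b$ spans $H^2(C\times C,\sT\boxtimes\sT(-\Delta))^+$, Steps 1 and 2 give $d(P/\Gamma)_{C_L}=H^2(\mu_{S_L})^+$, and the compatibility with the ladder \eqref{sM}$\to$\eqref{Ag} is Step 1.

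\emph{Main obstacle.} The hard part will be the identification in Step 2: one must justify that the singular integral in \eqref{formula} genuinely computes the algebraic cup-product-and-trace attached to $\mu_{S_L}$, i.e. that absorbing the pole of $S_L$ along $\Delta$ into the twist by $-\Delta$ is compatible with integrating over $C\times C$; I expect the clean route is to excise a tubular neighbourhood of $\Delta$, use the residue sequence defining $S_L$ to control the boundary contribution, and let the tube shrink. A secondary nuisance is bookkeeping --- matching the $\iota$-sign conventions with \cite[Sec.~3.1--3.3]{DW2} so that the $+$-parts really are $\bigwedge^2 H^1(C,\sT)$ and $\Sym^2 H^1(C,\O)$, and keeping track of the Serre-duality sign and of the $\wedge$ versus $\odot$ normalizations.
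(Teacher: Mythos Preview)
Your approach is the same as the paper's: reduce the identification of $d(P/\Gamma)_{C_L}$ with $H^2(\mu_{S_L})^+$ to the analytic formula \eqref{formula} of D'Hoker--Phong and Witten. In fact, your write-up is considerably more detailed than what the paper offers. The paper does not give a self-contained proof of Theorem~\ref{diff_per}; after Lemma~\ref{unique} it remarks that if one knew \emph{a priori} that the infinitesimal period map were induced by a morphism of sheaves on $C\times C$, then the theorem would follow from Lemma~\ref{unique} by uniqueness --- but it cannot establish this, and instead simply asserts that the theorem is correct because it matches the formula in \S\ref{sec:class_formula}. Your Steps~1 and~2 make explicit the two halves of that match (the classical $H^1(C,T_C)$ part via the ladder of exact sequences, and the new $\bigwedge^2 H^1(C,\sT)$ part via Serre duality and \eqref{formula}), which is more than the paper spells out.

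The ``main obstacle'' you flag --- that the singular integral in \eqref{formula} must be shown to compute the Serre-duality pairing attached to $\mu_{S_L}$, with the pole of $S_L$ along $\Delta$ absorbed into the twist by $-\Delta$ --- is a genuine gap, and the paper does not fill it either. So your proposal is an honest elaboration of the paper's strategy, with the same incompleteness made explicit rather than hidden.
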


If we were able to prove that, \emph{a priori}, the infinitesimal period map has to be the push forward of a morphism of sheaves on $C\times C$, then Theorem \ref{diff_per} would follow form Lemma \ref{unique}. Unfortunately, we are not able to complete this argument; we claim that Theorem \ref{diff_per} is correct because it matches up with the formula described in Section \ref{sec:class_formula}.


 Now, let us draw a consequence from our description.
 
\begin{theorem}\label{torelli}
Fix a complex point $[C_L]$ in $\sM_g^+$ such that $h^0(C,L)=0$. The infinitesimal period map 
$$ 
d(P/\Gamma)_{C_L}\colon T_{[C_L]}(\sM_g^+/\Gamma) \to T_{P(C_L)}\N_g 
$$
is surjective.
\end{theorem}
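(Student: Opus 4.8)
The plan is to exploit the commutative diagram of exact sequences furnished by Theorem \ref{diff_per}. We have the exact sequence \eqref{sM} for the source,
$$0\to H^1(C,T_C)\to H^2(C\times C,\sT\boxtimes\sT(-\Delta))^+\to \textstyle\bigwedge^2 H^1(C,\sT)\to 0,$$
and the exact sequence \eqref{Ag} for the target,
$$H^1(C,T_C)\xrightarrow{c=m^\vee} \Sym^2 H^1(C,\O)\to I_2(C)^\vee\to 0,$$
and by the last assertion of Theorem \ref{diff_per} the map $d(P/\Gamma)_{C_L}$ fits into a morphism between these two sequences which is the identity on $H^1(C,T_C)$. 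By the snake lemma (or a trivial diagram chase), surjectivity of $d(P/\Gamma)_{C_L}$ follows as soon as the induced map on the quotients $\bigwedge^2 H^1(C,\sT)\to I_2(C)^\vee$ is surjective, since the left vertical map is the identity (hence surjective) and the bottom row is right-exact.

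So the whole problem reduces to identifying this quotient map and showing it is surjective. First I would dualize. The map $\bigwedge^2 H^1(C,\sT)\to I_2(C)^\vee$ is, by Serre duality, dual to a map $I_2(C)\to \bigwedge^2 H^0(C,K_C\otimes\sT)=\bigwedge^2 H^0(C,\sT\otimes K_C)$; note $\sT=\sK^{-1}=L^{-1}$, so $K_C\otimes\sT=K_C\otimes L^{-1}\cong L$ (using $L^{\otimes 2}\cong K_C$), and thus this target is $\bigwedge^2 H^0(C,L)$. Wait — but we are in the locus $h^0(L)=0$, so $\bigwedge^2 H^0(C,L)=0$; the correct target is instead $H^1(C,\sT)=H^1(C,L^{-1})$, Serre dual to $H^0(C,L\otimes K_C)=H^0(C,K_C^{\otimes 2}\otimes L^{-1})$. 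In any case, I would trace through the Szegő-kernel description: the quotient map is induced by the multiplication-by-$S_L$ map of sheaves $\sT\boxtimes\sT(-\Delta)\hookrightarrow\O$ restricted to the "off-diagonal" parts, i.e. on $H^1(C,\sT)^{\otimes 2}\to H^2(C\times C,\O)$, and then one identifies the composite with $\Sym^2 H^1(C,\O)\twoheadrightarrow I_2(C)^\vee$. Dualizing, the claim becomes: for a quadric $q\in I_2(C)$, its image in $H^0(C\times C, \sK\boxtimes\sK)=H^0(C,K_C\otimes L^{-1})^{\otimes 2}$-type space, paired against $S_L$, vanishes — equivalently the map $I_2(C)\to (\text{off-diagonal piece})$ obtained by "multiplying a quadric by the Szegő kernel and restricting" is zero, which would make the quotient map an isomorphism onto the full cokernel and in particular surjective. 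The cleanest route is probably: show the quotient map $\bigwedge^2 H^1(C,\sT)\to I_2(C)^\vee$ is surjective by showing its dual $I_2(C)\to \bigwedge^2 H^1(C,\sT)^\vee$ is injective, and this injectivity is exactly the statement that a nonzero quadric through $C$ pairs nontrivially with the Szegő kernel (equivalently, the Szegő kernel is not annihilated by any quadric relation), which follows from the non-degeneracy properties of $S_L$ — concretely from the fact that $S_L$ generates the $1$-dimensional space $H^0(C\times C,\sK\boxtimes\sK(\Delta))$ and its "leading term" along $\Delta$ is the identity section of $\End(\sK)$, so it cannot lie in the image of any proper subspace coming from quadric relations.

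I would carry the steps in this order: (1) recall the two exact sequences and the morphism between them from Theorem \ref{diff_per}; (2) reduce surjectivity of $d(P/\Gamma)_{C_L}$ to surjectivity of the induced map $q\colon\bigwedge^2 H^1(C,\sT)\to I_2(C)^\vee$ via the five-lemma-type argument; (3) compute $q$ (or its dual) explicitly in terms of multiplication by $S_L$, using the identification of $H^2(C\times C,\O)^+=\Sym^2 H^1(C,\O)$ and the sheaf sequences \eqref{preAg}, \eqref{presM}, \eqref{dual}, \eqref{Serre-dual}; (4) prove $q$ is surjective — equivalently its Serre dual is injective — using the characterization of the Szegő kernel from \ref{sec:Szego} as the unique (up to scalar) section of $\sK\boxtimes\sK(\Delta)$ with residue the identity. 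The main obstacle is step (4): making rigorous and coordinate-free the statement that "a quadric relation of $C$ cannot annihilate the Szegő kernel." I expect this to follow from a local computation along the diagonal — the residue/leading term of $S_L$ is the identity endomorphism of $\sK$, which is nowhere vanishing, whereas a quadric relation pulled back to $C\times C$ and paired with $S_L$ would force a section of $2K_C$ that the relation kills to be nonzero — but pinning down the precise diagram and the sign conventions (matching those of \cite{DW2}) will require care. An alternative, if a direct argument proves awkward, is to invoke the analytic formula \eqref{formula}: the $\sigma^{(i,j)}_{a,b}$ are exactly the pairings of $S_L$ with products $f_a\omega_i$ and $\omega_j f_b$, and surjectivity of $d(P/\Gamma)_{C_L}$ onto $\Sym^2 H^1(C,\O)$ amounts to the span of these $\sigma$'s being all of $\Sym^2 H^0(C,K_C)^\vee$; this span equals the full space precisely because $S_L$, being the inverse of the evaluation/residue isomorphism, induces a perfect pairing modulo the quadric relations.
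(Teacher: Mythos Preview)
Your reduction via the morphism of exact sequences is sound: if the induced quotient map $q\colon \bigwedge^2 H^1(C,L^{-1})\to I_2(C)^\vee$ were surjective, the diagram chase you describe would finish the argument. The problem is that you never prove step (4). You write ``I expect this to follow from a local computation along the diagonal'' and then offer an alternative via the analytic formula, but neither route is carried out; in particular, the claim that a quadric relation cannot annihilate the Szeg\H{o} kernel is exactly the heart of the matter and you leave it as a hope. (Along the way there is also a moment of confusion where you dualize $\bigwedge^2 H^1(C,L^{-1})$ and land in $\bigwedge^2 H^0(C,L)=0$, which you correctly flag but then abandon without repairing.)

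The paper's argument bypasses all of this with a much cleaner idea that you are missing: work directly with the sheaf map rather than passing to quotients and duals. Multiplication by $S_L$ fits into a short exact sequence
\[
0 \longrightarrow L^{-1}\boxtimes L^{-1}(-\Delta) \xrightarrow{\ \cdot S_L\ } \O_{C\times C} \longrightarrow \F \longrightarrow 0,
\]
where $\F$ is supported on the zero divisor $D$ of $S_L$. Since $D$ is one-dimensional inside the surface $C\times C$, we have $H^2(C\times C,\F)=0$ automatically, and the long exact sequence in cohomology (taking even parts) gives surjectivity of $d(P/\Gamma)_{C_L}$ immediately. This avoids entirely the question of what the quotient map $q$ looks like or whether quadric relations interact nontrivially with $S_L$; the only input needed is that $S_L$ is a nonzero section, so its zero locus is a divisor.
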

\begin{proof}
We use the description of the differential of the period map given in Theorem \ref{diff_per}. Let $D$ be the zero divisor of the Szeg\H{o} kernel $S_L$ and consider the  exact sequence of sheaves on $C\times C$ 
$$
0\to \O \xrightarrow{\cdot S_L}  \sK\boxtimes \sK (\Delta) \to \O(D)|_D \to 0,
$$
where the first morphism is induced by the multiplication by $S_L$.
Tensoring by $\sT\boxtimes \sT(-\Delta)$ we obtain a sequence 
$$
 0 \to \sT\boxtimes \sT(-\Delta)  \xrightarrow{\cdot S_L}   \O\to \F:= \sT\boxtimes \sT(-\Delta)\otimes \O(D)|_D  \to 0.
 $$
 
Remark that $\F$ is still supported on $D$. The period map is obtained by taking the even part of the cohomology, namely we have the exact sequence
$$
 H^2( C\times C, \sT\boxtimes \sT(-\Delta))^+\xrightarrow{d(P/\Gamma)_{C_L}}  \Sym^2H^1(C,\mathcal{O}) \to H^2(C\times C,\F)^+
$$
Since $\F$ is supported on the (one-dimensional) divisor $D$, we must have that $H^2(C\times C,\F)=0$, which then gives the surjectivity of $d(P/\Gamma)$ as claimed. 
\end{proof}


\begin{remark}\label{R:not-inj}
The infinitesimal period map described above can not be injective for dimensional reasons since 
$$
 \dim T_{[C_L]}(\sM_g^+/\Gamma)=3g-3+\binom{2g-2}{2}> \dim T_{P(C_L)}\N_g=\binom{g+1}{2}.$$
\end{remark}

\begin{remark}[Super Schottky problem] Composing with the projection from $\N_g$ to the moduli stack of principally polarized abelian varieties $\A_g$, we can look at the image of $\sM_g^+$ in $\A_g$ via the period map. This is a schematic thickening of the image of the classical moduli space $\M_g$, and it make sense to ask for its description. This should be the super version of the Schottky problem. In genus $4$, the image of $\sM_4^+$ is, up to embedded points, cut out by a power of the classical Schottky form. In view of Theorem \ref{torelli}, one needs to take at least the second power.
\end{remark}

\end{subsubsection}

\end{subsection}
\end{section}

\end{document}